\documentclass[reqno]{amsart}
\usepackage{amsmath,amssymb,amsfonts,amsthm}
\usepackage{mathrsfs}
\usepackage{xcolor}
\usepackage{enumitem}
\usepackage[colorlinks=false,pdfborder={0 0 0}]{hyperref}
\numberwithin{equation}{section}
\newtheorem{theorem}{Theorem}[section]
\newtheorem{proposition}[theorem]{Proposition}
\newtheorem{lemma}[theorem]{Lemma}
\newtheorem{corollary}[theorem]{Corollary}
\theoremstyle{definition}
\newtheorem{definition}[theorem]{Definition}
\theoremstyle{remark}
\newtheorem{remark}[theorem]{Remark}
\theoremstyle{plain}
\newtheorem*{thmdim}{Theorem~\ref{thm:dim-formula}}

\DeclareMathOperator{\diver}{div}
\DeclareMathOperator{\supp}{supp}
\DeclareMathOperator{\tr}{tr}
\DeclareMathOperator{\spann}{span}
\DeclareMathOperator{\rank}{rank}
\DeclareMathOperator{\im}{im}
\DeclareMathOperator{\ind}{Ind}
\newcommand{\Real}{\operatorname{Re}}
\newcommand{\Imag}{\operatorname{Im}}

\newcommand{\R}{\mathbb R}
\newcommand{\Z}{\mathbb Z}
\newcommand{\C}{\mathbb C}
\newcommand{\HH}{\mathcal H}
\newcommand{\X}{\mathfrak X}
\newcommand{\p}{\partial}
\newcommand{\dd}{\,\mathrm{d}}
\newcommand{\Sig}{\Sigma}
\newcommand{\Om}{\Omega}
\newcommand{\dOm}{\partial\Omega}
\newcommand{\dSig}{\partial\Sigma}
\newcommand{\Sgb}{\overline{\Sigma}}
\newcommand{\dSgb}{\partial\overline{\Sigma}}
\newcommand{\Sdouble}{\widehat{\Sigma}}
\newcommand{\Ind}{\ind}
\newcommand{\II}{\mathrm{I\!I}}
\newcommand{\Lsq}{L^2\HH^1_T}
\newcommand{\Lrho}{L^2_\rho\HH^1_T}
\newcommand{\Lstar}{L^2_\ast\HH^1_T}
\newcommand{\Lst}{\mathcal L^\star}
\newcommand{\Ed}{\mathcal E}
\newcommand{\cQ}{\mathcal Q}
\newcommand{\cJ}{\mathcal J}
\newcommand{\cB}{\mathcal B}
\newcommand{\lcut}{\widehat\varphi}

\usepackage{marginnote}

\begin{document}

\title[Complete noncompact free boundary minimal surfaces]{Index estimates for complete noncompact free boundary minimal surfaces}

\author[Cavalcante]{Marcos P. Cavalcante}
\author[Mendes]{Abraão Mendes}
\author[dos Santos]{Ian R. dos Santos}
\address{Institute of Mathematics, Federal University of Alagoas (UFAL),
Maceió--AL, Brazil}
\email{marcos.cavalcante@im.ufal.br}
\email{abraao.mendes@im.ufal.br}
\email{ian.santos@im.ufal.br}
\subjclass[2020]{Primary 53A10, 49Q05; Secondary 58E12, 30F15, 30F30}
\keywords{Free boundary minimal surfaces; Morse index; weighted harmonic one-forms; finite total curvature; conformal compactification; ends and multiplicities}
\date{\today}

\begin{abstract}
Let $\Om\subset\R^3$ be an unbounded domain with smooth boundary and let $\Sig$ be a complete, noncompact, orientable, immersed free boundary minimal surface in $\Om$, with compact boundary $\dSig\subset\dOm$ and finite Morse index. We prove that if the mean curvature of $\dOm$ satisfies $H_{\dOm}\ge0$ along $\dSig$ and $H_{\dOm}>0$ at some point of $\dSig$, then
\[
\Ind(\Sig)\ \ge\ \frac13\Bigl(2g+k+2\sum_{j=1}^r(d_j+1)-2\Bigr),
\]
where $g$ is the genus of $\Sig$, $k$ is the number of connected components of $\dSig$, $r$ is the number of ends of $\Sig$ and $d_1,\dots,d_r$ are their respective multiplicities. When $H_{\dOm}$ is only assumed to be nonnegative we obtain $\Ind(\Sig)\ge(2g+k-2)/3$, with the sharp bound $(2g+k-1)/3$ under a mild condition on the ends. The proofs use the harmonic one-form method of Ros and Chodosh--Máximo, with weighted $L^2$ spaces, adapted to the free boundary setting in the spirit of Ambrozio--Carlotto--Sharp. The main new ingredient is the computation of the dimension of the space of harmonic one-forms on a punctured compact Riemann surface with boundary which are tangential along the boundary and square integrable with respect to a weight. For a class of admissible weights $\rho$, we prove that this dimension is $2g+k-1+2\sum_jN_j-\varepsilon$, where $N_j$ is the maximal order of pole allowed by $\rho$ at the $j$-th puncture and $\varepsilon\in\{0,1\}$. For the weight of Chodosh--Máximo one has $N_j=d_j+1$.
\end{abstract}

\maketitle

\tableofcontents

\section{Introduction}\label{sec:intro}

The Morse index of a minimal surface measures the number of independent deformations that decrease area to second order, and its interplay with the topology of the surface has attracted considerable attention. A fundamental tool in this direction is the harmonic one-form method introduced by Ros \cite{Ros}, developed by Savo \cite{Savo} in the sphere, refined by Chodosh and Máximo \cite{CM1,CM2} for complete minimal surfaces of finite index in $\R^3$, and extended by Ambrozio, Carlotto and Sharp \cite{ACS1,ACS2} to broad classes of closed and of compact free boundary minimal hypersurfaces, where in dimension three the resulting estimates are expressed in terms of the genus and the number of boundary components. More recently, Hong and Saturnino \cite{HS} developed the spectral and structural theory of complete free boundary, and more generally capillary, surfaces of finite index. The purpose of the present paper is to establish index estimates for complete \emph{noncompact} free boundary minimal surfaces of finite Morse index with compact boundary, in terms of the genus, the number of boundary components, and, above all, the ends of the surface and their multiplicities, in the spirit of the second paper of Chodosh and Máximo \cite{CM2} in the boundaryless case.

\subsection{The geometric setting and the main results}

Throughout this paper, $\Om$ denotes an unbounded domain in $\R^3$ with smooth boundary $\dOm$, and $\Sig$ denotes a complete, noncompact, orientable, immersed free boundary minimal surface in $\Om$ with compact boundary $\dSig\subset\dOm$; free boundary means that $\Sig$ meets $\dOm$ orthogonally along $\dSig$. We write $H_{\dOm}$ for the mean curvature of $\dOm$ computed with respect to the outward unit normal of $\Om$.

Assume that $\Sig$ has finite Morse index. By the structure theorem of Hong and Saturnino \cite[Thm.~1.4]{HS}, recalled here as Theorem~\ref{thm:HS-structure}, the surface $\Sig$ is then conformally a compact Riemann surface with boundary $\Sgb$, of genus $g$ and with $k$ boundary components, punctured at $r$ interior points $p_1,\dots,p_r$,
\[
\Sig\ \cong\ \Sgb\setminus\{p_1,\dots,p_r\}, \qquad \dSgb=\dSig,
\]
the punctures corresponding to the ends $\Ed_1,\dots,\Ed_r$ of $\Sig$. Moreover, $\Sig$ has finite total curvature, so that each end is, with some multiplicity $d_j\ge1$, asymptotic to a plane or to a half-catenoid. One has $d_j=1$ exactly when the end is embedded, and $d_j\ge2$ for ends that wind onto a plane with higher multiplicity, as already happens for the Enneper end, where $d=3$. Our main result is the following.

\begin{theorem}\label{thm:main}
Let $\Sig$ be a complete, noncompact, orientable, immersed free boundary minimal surface of finite Morse index in $\Om$, with compact boundary $\dSig\subset\dOm$. Suppose that
\[
H_{\dOm}\ge0\ \text{ along }\ \dSig,\ \text{ and }\ H_{\dOm}(p)>0\ \text{ for some }\ p\in\dSig.
\]
Then
\begin{equation}\label{eq:main}
\Ind(\Sig)\ \ge\ \frac13\Bigl(2g+k+2\sum_{j=1}^r(d_j+1)-2\Bigr),
\end{equation}
where $g$ is the genus of $\Sig$, $k$ is the number of connected components of $\dSig$, $r$ is the number of ends of $\Sig$ and $d_1,\dots,d_r$ are their respective multiplicities.
\end{theorem}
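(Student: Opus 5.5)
We will run the harmonic one-form method of Ros and Chodosh--Máximo in the free boundary setting of Ambrozio--Carlotto--Sharp.

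\textbf{Setup.} By the structure theorem of Hong and Saturnino (Theorem~\ref{thm:HS-structure}), $\Sig\cong\Sgb\setminus\{p_1,\dots,p_r\}$ has finite total curvature, and each end is asymptotic with multiplicity $d_j$ to a plane or half-catenoid. Fix the Chodosh--Máximo weight $\rho$, comparable near $p_j$ to $|x|^{-2}(\log|x|)^{-2}$ in the extrinsic coordinate. Let $\HH^1_T(\rho)$ be the space of harmonic one-forms $\omega$ on $\Sig$ that are tangential along $\dSig$ (i.e.\ $\omega(\nu)=0$ on $\dSig$) and satisfy $\int_\Sig|\omega|^2\rho<\infty$. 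The dimension count stated in the abstract, with $N_j=d_j+1$ for this weight, gives
\[
\dim\HH^1_T(\rho)\ \ge\ 2g+k-1+2\sum_j(d_j+1)-1 .
\]
The \emph{a priori} loss of $\varepsilon\le1$ is exactly what produces the $-2$ in \eqref{eq:main}.

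\textbf{Test sections.} For $\omega\in\HH^1_T(\rho)$, let $\omega^\sharp$ be its dual vector field on $\Sig$ and $N$ the unit normal. Consider the three normal sections
\[
u_i=\langle \omega^\sharp\times N, e_i\rangle, \qquad \text{equivalently } X=\omega^\sharp\times N .
\]
The usual Ros computation uses the Bochner formula, harmonicity, minimality and Gauss curvature $K\le0$. It gives a pointwise identity whose integrated interior part of $Q(u,u)$, summed over $i$, is nonpositive; equality forces $\omega\equiv0$ or a rigid situation.

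The free boundary term is where orthogonality to $\dOm$ enters. As in Ambrozio--Carlotto--Sharp, the boundary integrand of the second variation, $-\II_{\dOm}(N,N)|u|^2$ combined with the divergence boundary term, reduces, using that $\omega$ is tangential, to
\[
-\int_{\dSig}H_{\dOm}|\omega|^2 ,
\]
up to sign conventions. This term is $\le0$ because $H_{\dOm}\ge0$.

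In the noncompact setting the quadratic form must be controlled with logarithmic cutoffs $\lcut$. I will show that $\sum_iQ(\lcut u_i,\lcut u_i)\to\sum_iQ(u_i,u_i)$. The weight $\rho$ is chosen exactly so that $u_i$ lies in the weighted space on which the Jacobi operator has finite index, in the sense of Chodosh--Máximo's $L^2_\rho$ theory. Consequently $\Ind(\Sig)$ equals the index of $Q$ on that weighted space.

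\textbf{Counting.} Let $V$ be a maximal subspace on which $Q<0$, so $\dim V=\Ind(\Sig)$. Impose the $3\Ind(\Sig)$ linear conditions that each $u_i$ be $L^2_\rho$-orthogonal to the eigenfunctions spanning $V$. If $\dim\HH^1_T(\rho)>3\Ind(\Sig)$, this yields a nonzero $\omega$ with $Q(u_i,u_i)\ge0$ for every $i$. Combined with the identity above, all terms must vanish, and in particular
\[
H_{\dOm}|\omega|^2=0 \ \text{ on } \dSig .
\]
Since $H_{\dOm}(p)>0$, $\omega$ vanishes on an arc of $\dSig$. The equality case also forces each $u_i$ to be a Jacobi field, and unique continuation for the holomorphic form $\omega-i\star\omega$ (its boundary values vanish on an arc) then gives $\omega\equiv0$, a contradiction. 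Hence
\[
3\Ind(\Sig)\ \ge\ 2g+k+2\sum_j(d_j+1)-2 ,
\]
which is \eqref{eq:main}.

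\textbf{Main obstacle.} The hard step is the dimension formula: identifying the tangential harmonic forms with finite weighted $L^2$ norm as those with poles of order up to $N_j=d_j+1$ at $p_j$. Planned route:
\begin{enumerate}
\item Pass to the Schottky double $\Sdouble$, of genus $2g+k-1$ with $2r$ punctures.
\item Tangential forms correspond to anti-invariant (or invariant) real parts of meromorphic differentials.
\item Count these by Riemann--Roch with the divisor $\sum N_j(p_j+\bar p_j)$.
\item Account for the residue theorem, which costs $\varepsilon$.
\item Check integrability of $|z|^{-m}$ poles against $\rho$ expressed in the conformal coordinate, where $|x|\sim|z|^{-d_j}$.
\end{enumerate}
The other delicate point is the cutoff/weighted-index argument near the ends, which I expect to transfer from Chodosh--Máximo essentially verbatim, since $\dSig$ is compact.
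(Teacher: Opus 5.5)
Your strategy is sound and has the same skeleton as the paper's proof. You share the dimension $2g+k+2\sum_j(d_j+1)-2$ coming from $N_j=d_j+1$, and the identity $\sum_i\cQ(\varphi u_i,\varphi u_i)=\int_\Sig|\nabla\varphi|^2|\omega|^2\dd\mu-\int_{\dSig}H_{\dOm}|\omega|^2\dd\sigma$. You also share the logarithmic cut-off, unique continuation from an arc, and the $3\Ind(\Sig)$ linear conditions. Where you genuinely differ is in how the ends enter the counting.

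The paper never works with the untruncated $u_i$. By unique continuation, the boundary form $\int_{\dSig}H_{\dOm}|\xi|^2\dd\sigma$ is positive definite on the finite-dimensional space $\Lstar(\Sig)$. The cut-off error is at most $C_0\epsilon_R\|\xi\|_\rho^2$ uniformly. Hence the paper fixes a single $R$ with $\cQ(\lcut_R\xi,\lcut_R\xi)<0$ for all $\xi\neq0$. Only then does it impose $L^2$-orthogonality of the compactly supported $\lcut_Ru_i$ to the unweighted Hong--Saturnino eigenfunctions.

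You fix $\omega$ first and want $\cQ(u_i,u_i)\ge0$ for the untruncated $u_i$. That needs two things you have not supplied:
\begin{itemize}
\item[(a)] $u_i$ must lie in the weighted energy space, i.e.\ $|\nabla\omega|\in L^2(\Sig)$. This is true, via Bochner and the logarithmic cut-off, but you do not address it.
\item[(b)] You need a $\rho$-weighted spectral theorem for the Robin problem. This is not in the free boundary literature (Chodosh--M\'aximo have no boundary). It is not automatic either, since at this critical logarithmic weight the energy space does not embed compactly into $L^2_\rho$.
\end{itemize}
Also, $L^2_\rho$-orthogonality to an arbitrary maximal negative subspace gives nothing; the subspace must be spanned by genuine weighted eigenfunctions.

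The simplest way to make your route rigorous avoids eigenfunctions entirely. Take a negative definite $W\subset C_c^\infty(\Sig)$ with $\dim W=\Ind(\Sig)$, and impose $\cQ(u_i,w)=0$ for $w\in W$; these are $3\Ind(\Sig)$ linear conditions on $\omega$. Since $W$ has compact support, $\cQ(\lcut_Ru_i,w)=\cQ(u_i,w)=0$ for all large $R$. Maximality of $W$ then forces $\cQ(\lcut_Ru_i,\lcut_Ru_i)\ge0$. Summing over $i$ and letting $R\to+\infty$ in the identity gives $\int_{\dSig}H_{\dOm}|\omega|^2\dd\sigma\le0$, and you conclude as you do. This needs neither a weighted spectral theorem, nor $|\nabla\omega|\in L^2$, nor the paper's uniformity argument.

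Smaller points:
\begin{itemize}
\item The Ros computation is an exact identity, not an inequality. No Bochner formula or sign of $K$ is involved. With a cut-off, the summed interior contribution is exactly $\int_\Sig|\nabla\lcut_R|^2|\omega|^2\dd\mu\ge0$. So there is no rigidity case and no Jacobi-field step: vanishing on an arc plus Schwarz reflection of the holomorphic form suffices.
\item Your test field $\omega^\sharp\times N=\pm\star\omega^\sharp$ is normal rather than tangent to $\dSig$. This is harmless, because $|\star\omega|=|\omega|$ pointwise, so $\tfrac12\p_\nu|\star\omega|^2=-\II_{\dOm}(\omega^\sharp,\omega^\sharp)$ and the boundary term is unchanged.
\item Counting via Riemann--Roch on $\Sdouble$ is the alternative noted in Remark~\ref{rem:double}. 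The paper instead truncates Laurent series and solves a Dirichlet and a Neumann problem. For the index bound, only the lower bound on the dimension is needed.
\end{itemize}
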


The estimate \eqref{eq:main} is attained: the free boundary half-catenoid has
$g=0$, $k=1$ and a single embedded catenoidal end, so that the right-hand side
equals $1$. On the other hand, its Morse index is exactly $1$ \cite[Thm.~1.1]{CS2} (see also
\cite[Cor.~24]{ABCS3}).

Since $\Sig$ is noncompact, it has at least one end, and every multiplicity is at least~$1$. Hence the right-hand side of \eqref{eq:main} is at least $(2g+k+2)/3>0$, and in particular there is no stable surface in this class (see Corollary~\ref{cor:no-stable}). We point out that the harmonic one-form method with the usual $L^2$ test fields gives at most $(2g+k-1)/3$; see Theorem~\ref{thm:weak} below. The term $2\sum_{j=1}^r(d_j+1)$ in \eqref{eq:main} comes from the ends of $\Sig$, and it is obtained by working with a weighted $L^2$ space of harmonic fields, as in \cite{CM2}.

When $H_{\dOm}$ vanishes identically along $\dSig$, the boundary term in the second variation of area may vanish and the argument above does not work with the weighted space. In this case we use the $L^2$ space of harmonic fields, and we lose at most one unit in the estimate. More precisely, let
\[
d:=\dim_\R\bigl(\Lst(\Sig)\cap\Lsq(\Sig)\bigr),
\]
be the dimension of the space of rotational harmonic fields which are in $L^2$ and tangent to $\dSig$. We prove that $d\le1$ (Proposition~\ref{prop:dim-Lstar}).

\begin{theorem}\label{thm:weak}
Let $\Sig$ be as in Theorem~\ref{thm:main}, but assume only that $H_{\dOm}\ge0$ along $\dSig$. Then
\begin{equation}\label{eq:weak}
\Ind(\Sig)\ \ge\ \frac{2g+k-1-d}{3}\ \ge\ \frac{2g+k-2}{3}.
\end{equation}
In particular, $\Ind(\Sig)\ge(2g+k-1)/3$ whenever $d=0$, which happens for instance when $k=1$, when some end of $\Sig$ is catenoidal, or when the ends of $\Sig$ do not all share a common limiting normal.
\end{theorem}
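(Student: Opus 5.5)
We follow the harmonic one-form method of Ros and Ambrozio--Carlotto--Sharp, with unweighted $L^2$ test fields. Let $\HH^1_T$ denote the space of harmonic one-forms on $\Sgb\setminus\{p_1,\dots,p_r\}$ that are tangential along $\dSig$, and $\Lsq(\Sig)$ those that are square integrable. By the dimension formula (the main ingredient announced in the abstract, with the trivial weight, so $N_j=0$), $\dim\Lsq(\Sig)\ge 2g+k-1-\varepsilon$. In fact we expect the correct statement to be $\dim \Lsq = 2g+k-1$ once $L^2$ forms, which extend across the punctures, are identified with tangential harmonic forms on $\Sgb$. Dualizing with the metric gives harmonic vector fields $\xi$ on $\Sig$, tangent to $\dSig$. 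For each such $\xi$ and the constant vectors $e_1,e_2,e_3$ we form the test sections $u_i=\langle \xi\times N, e_i\rangle$ or, equivalently, the vector-valued field $X=\xi\times N$ (as in Ros). The standard computation gives
\[
\sum_{i=1}^3 Q(u_i,u_i)= -\int_{\dSig} H_{\dOm}\,|\xi|^2 \le 0,
\]
using minimality, the free boundary condition (the boundary term reduces to the second fundamental form of $\dOm$ evaluated on $\nu$ and on the tangent direction, and tangentiality of $\xi$ turns it into $-H_{\dOm}|\xi|^2$ up to sign conventions) and $H_{\dOm}\ge0$. Since $\Sig$ is noncompact, we must justify the integrations by parts. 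We use logarithmic cutoffs $\lcut_R$ on the ends and the finite total curvature structure (Theorem~\ref{thm:HS-structure}), and we show that $Q(\lcut_R u_i,\lcut_R u_i)\to Q(u_i,u_i)$ for $\xi\in L^2$. The key point is that $|A||\xi|$ and $|\nabla u_i|$ are in $L^2$ since $\xi$ is bounded near each end.

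The index count then proceeds by the usual linear algebra. Suppose $\Ind(\Sig)=m$, realized as the index on a large compact exhaustion, with eigenfunctions $\phi_1,\dots,\phi_m$ in the weighted/Dirichlet sense of \cite{HS}. Consider the linear map
\[
\Lsq(\Sig)\to\R^{3m},\qquad \xi\mapsto\Bigl(\int_\Sig u_i\phi_l\Bigr)_{i,l}.
\]
If $\dim\Lsq>3m+d$, there is a subspace $W$ of dimension greater than $d$ in the kernel. For $\xi\in W$, every $u_i$ is $Q$-orthogonal to the negative directions, so $Q(u_i,u_i)\ge0$. Combined with the identity above, this gives $Q(u_i,u_i)=0$ for all $i$, so each $u_i$ is a Jacobi field that minimizes $Q$ on the complement. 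Hence $L u_i=0$ in $\Sig$ with the Robin boundary condition, and $H_{\dOm}|\xi|^2=0$ on $\dSig$.

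The main step is to show that this forces $\xi$ into $\Lst(\Sig)\cap\Lsq(\Sig)$, which has dimension $d$; this would contradict $\dim W>d$. Following Ros and \cite{CM1}, the vanishing $Q(\xi\times N\cdot e_i)=0$ together with the Jacobi equation yields that $\xi$ is conformal or anticonformal with respect to the Gauss map, so $\xi$, after rotation, is a "rotational" field, i.e., it lies in $\Lst$. The hardest part is adapting this rigidity to the boundary and to the ends: unique continuation must be handled carefully, and we have to check that the Jacobi fields produced satisfy the boundary condition without using $H_{\dOm}>0$. We will try to use the Cauchy--Riemann-type identity $\nabla u_i$ expressed via $\xi$ and $dN$ to conclude that $\star\xi$ is generated by a rotation vector field. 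This gives $3\,\Ind(\Sig)\ge 2g+k-1-d$.

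The second inequality follows from Proposition~\ref{prop:dim-Lstar}, which gives $d\le1$. For the final statement, we show $d=0$ in the listed cases. If $k=1$, a rotational field tangent to a single boundary curve and in $L^2$ must vanish by a flux/period argument. A catenoidal end makes the rotational field non-$L^2$. If the ends do not share a common limiting normal, then the rotation axis must be parallel to every limiting normal for $L^2$ decay, which is impossible.
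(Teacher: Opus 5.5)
Your outline is the paper's Case~A: tangential $L^2$ harmonic fields, the map $\xi\mapsto\bigl(\int_\Sig u_i\phi_\ell\dd\mu\bigr)$ into $\R^{3n}$, a kernel contained in $\Lst\cap\Lsq$, and then $d\le1$. However, the step that carries the proof is asserted rather than proved, and one case is missing.

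\textbf{The noncompact step.} On the noncompact $\Sig$ the $u_i$ are not compactly supported. Theorem~\ref{thm:HS} gives $\cQ\ge0$ only on $C_c^\infty(\Sig)\cap W^\perp$, where $W=\spann\{\phi_1,\dots,\phi_n\}$. Multiplying by a cut-off destroys the orthogonality to the $\phi_\ell$. So ``$\cQ(u_i,u_i)\ge0$'' and ``$\cQ(u_i,u_i)=0$, hence $u_i$ is a Jacobi field with the Robin condition'' have no meaning as written, and proving $\cQ(\lcut_Ru_i,\lcut_Ru_i)\to\cQ(u_i,u_i)$ does not fix this. What is needed is what the paper does:
\begin{enumerate}
\item Correct the cut-off field to $\zeta_R=\varphi_R\bigl(\xi+\sum_k\phi_k\vec v_k(R)\bigr)$ with $\vec v_k(R)\to\vec 0$, so that its coordinates lie in $W^\perp$.
\item Use the discriminant inequality $\cQ(\zeta_R,Y)^2\le\cQ(Y,Y)\,\cQ(\zeta_R,\zeta_R)$ together with $\limsup_R\cQ(\zeta_R,\zeta_R)\le0$. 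This is where Lemma~\ref{lem:key-identity} and $H_{\dOm}\ge0$ enter. It gives $\cQ(\xi,Y)=0$ for compactly supported $Y$ with coordinates in $W^\perp$.
\item Pass to square integrable $Y$.
\item Show separately that the multipliers vanish, $\cQ(\xi,\phi_\ell\vec a)=0$. This is an integration by parts that needs $|\nabla\xi|,|\nabla\phi_\ell|\in L^2$ and $|\nabla(\phi_\ell\langle\xi,\vec a\rangle)|\in L^1$ (Lemma~\ref{lem:aux}). Note that $|\nabla u_i|\in L^2$ comes from the Bochner formula, not from boundedness of $\xi$.
\end{enumerate}
By contrast, the step you call hardest is immediate. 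Once $\cJ u_i=0$, the formula $\cJ u_i=-2g_i\langle\nabla\xi,A\rangle$ and $\sum_ig_i^2=1$ give $\langle\nabla\xi,A\rangle\equiv0$, and Theorem~\ref{thm:Ros} gives $\xi\in\Lst$. No boundary or unique continuation issue arises.

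\textbf{The totally geodesic case.} Theorem~\ref{thm:Ros} requires $\Sig$ not totally geodesic. If $A\equiv0$, every $\xi$ satisfies $\langle\nabla\xi,A\rangle\equiv0$, and your argument yields no bound at all. The paper treats this case separately: Theorem~\ref{thm:main} if $H_{\dOm}>0$ somewhere on $\dSig$, and the Robin index estimate of \cite{Mendes} if $H_{\dOm}\equiv0$. Your proposal omits it.

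\textbf{The choice of test field.} Your test field does not match your conclusion. $\xi\times N=\pm\star\xi$ is \emph{normal} along $\dSig$, so tangentiality of $\xi$ is not what produces the boundary term. The identity still holds: writing $\xi\times N=f\nu$ on $\dSig$, the condition $\diver(\xi\times N)=0$ gives $\langle\nabla_\nu(\xi\times N),\xi\times N\rangle=-f^2\II_{\dOm}(\tau,\tau)$. But Ros's theorem then yields $\xi\times N\in\Lst$, i.e.\ $\xi\in\spann\{\xi_1,\xi_2,\xi_3\}$, not $\xi\in\Lst$. So the exceptional space for your test fields is $\spann\{\xi_1,\xi_2,\xi_3\}\cap\Lsq$, not the space defining $d$. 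You should either use the coordinates of $\xi$ itself, as the paper does, or redo the count. In the latter case, an element $\nabla\ell_{\vec a}$ of that space makes $\ell_{\vec a}$ a harmonic function of finite Dirichlet energy. It therefore extends harmonically to $\Sgb$ with $\p_\nu\ell_{\vec a}=0$ on $\dSgb$, hence is constant, so this space is $\{0\}$ when $\Sig$ is not totally geodesic.

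\textbf{Smaller points.} The same extension argument, with the maximum principle in place of the Neumann condition, is the mechanism behind $d=0$ when $k=1$, rather than a flux or period argument. Also, with $\rho\equiv1$ one has $\varepsilon=0$, so $\dim\Lsq(\Sig)=2g+k-1$ exactly.
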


\begin{remark}\label{rem:hypotheses}
The domain $\Om$ enters the proofs only through a neighborhood of $\dSig$: the second variation of area sees $\dOm$ only through its second fundamental form along the compact curve $\dSig$, and the compactification (Theorem~\ref{thm:HS-structure}) needs no hypothesis on $H_{\dOm}$ at all.
Theorems~\ref{thm:main} and~\ref{thm:weak} are therefore statements about a \emph{support surface}: a surface $S\subset\R^3$ containing $\dSig$, met orthogonally by $\Sig$ along $\dSig$ and only there, whose mean curvature, computed with respect to the unit normal pointing away from $\Sig$, is nonnegative along $\dSig$ (respectively positive somewhere on $\dSig$). 
Both the index of $\Sig$ and the hypotheses of the theorems depend on $S$ only along $\dSig$, and any unbounded domain $\Om$
with $\Sig\subset\overline\Om$, $\Sig\cap\dOm=\dSig$ and $\dOm=S$ near $\dSig$ may be used to phrase them.

The sign condition cannot be dropped. If $\Om$ is the exterior of $k\ge3$
disjoint round balls with collinear centers, so that $\dOm$ is mean-concave as
seen from $\Om$, a plane through the centers minus $k$ disks is a stable free
boundary minimal surface in $\Om$, and no estimate of the form \eqref{eq:main}
or \eqref{eq:weak} can hold.
\end{remark}

\begin{remark}\label{rem:bending}
The free boundary condition involves only the tangent
planes of $S$ along $\dSig$, that is, it is a first order condition on $S$.
On the other hand, the mean curvature of $S$ along $\dSig$ depends also on
the second derivatives of $S$ in the direction normal to $\dSig$ inside $S$.
In other words, we can bend $S$ near $\dSig$, by a quantity of order two in
the distance to $\dSig$, without moving $\Sig$ and without losing the
orthogonality. 

Let us describe the basic example. Let $\Sig$ be a complete noncompact free
boundary minimal surface in the half-space $\{x_3>0\}$ with compact boundary
in the plane $P=\{x_3=0\}$. Since $H_P\equiv0$, a priori only
Theorem~\ref{thm:weak} applies to $\Sig$. Now we bend $P$ near $\dSig$
towards the side of $\Sig$, in a parabolic way, and we obtain a new support
surface $S_\epsilon$ which is tangent to $P$ along $\dSig$ and has constant
mean curvature $2\epsilon>0$ along $\dSig$. If $\epsilon$ is small, then
$\Sig$ meets $S_\epsilon$ only along $\dSig$. 
Therefore, $\Sig$ satisfies the hypotheses of Theorem~\ref{thm:main} with respect to $S_\epsilon$.

We notice however that the index here is the index of $\Sig$ with respect to
$S_\epsilon$, and not with respect to $P$. Since the bending adds a
negative boundary term to the second variation formula, this index is
greater than or equal to the index of $\Sig$ with respect to $P$. Hence,
\eqref{eq:main} gives a lower bound for it as well.

Such surfaces arise for instance by halving symmetric ones. Let $M\subset\R^3$
be a complete orientable minimal surface of finite total curvature, invariant
under the reflection in a plane $P$, and suppose that no end of $M$ is
invariant under this reflection. Then $M\cap P$ is a finite union of $c\ge1$
compact geodesics of $M$, the reflection interchanges the ends in pairs, and
each half $\Sig$ of $M$ is a free boundary minimal surface with support
surface $P$, with $k=c$ boundary components, half of the ends of $M$ with
their multiplicities, and genus $h$ given by $\operatorname{genus}(M)=2h+c-1$.
There are many examples of such surfaces $M$; we refer to Berglund and
Rossman \cite{BerglundRossman}, Kapouleas \cite{Kapouleas} and Wohlgemuth
\cite{Wohlgemuth} for constructions of complete minimal surfaces of finite
total curvature with planes of symmetry.
\end{remark}

\subsection{The dimension formula}

Let us describe the strategy of the proof of Theorem~\ref{thm:main}. As in \cite{Ros,ACS2,CM1}, we use the coordinate functions of harmonic vector fields on $\Sig$ which are tangent to $\dSig$, multiplied by a cut-off function, as test functions for the second variation of area. If $V$ is a space of such fields for which the argument works, we obtain $\Ind(\Sig)\ge\dim V/3$. When $V$ is the space $\Lsq(\Sig)$ of $L^2$ harmonic fields tangent to $\dSig$, we have $\dim V=2g+k-1$ and we obtain Theorem~\ref{thm:weak}. Note that in this case the ends do not appear in the estimate, since an $L^2$ harmonic one-form extends smoothly across the punctures. In order to obtain Theorem~\ref{thm:main}, we follow \cite[\S3]{CM2} and we replace the $L^2$ condition by square integrability with respect to the weight
\begin{equation}\label{eq:weight-intro}
\rho(x)=\frac{1}{(1+|x|^2)\bigl(\log(2+|x|)\bigr)^2},\qquad x\in\R^3,
\end{equation}
restricted to $\Sig$. Since $\rho$ goes to zero along the ends, the corresponding space $\Lstar(\Sig)$ of weighted harmonic fields contains $\Lsq(\Sig)$ and also fields which blow up at the punctures. The main step of the proof is the computation of its dimension.

We do this computation in Sections~\ref{sec:conformal} and \ref{sec:weighted} in a more general setting, since it depends only on the conformal structure of $\Sig$ and on the behavior of the weight near the punctures. More precisely, we consider a compact Riemann surface with boundary punctured at finitely many interior points, and a weight $\rho$ which near each puncture $p_j$ is comparable to a radial function. To such a weight we associate an integer $N_j\ge0$, namely the largest order of pole at $p_j$ with finite energy with respect to $\rho$ (see Definition~\ref{def:admissible}). Our second result is the following.

\begin{theorem}\label{thm:dim-formula}
Let $\Sgb$ be a compact Riemann surface of genus $g$ with $k\ge1$ boundary components, let $p_1,\dots,p_r$ be distinct interior points and set $\Sig=\Sgb\setminus\{p_1,\dots,p_r\}$. Let $\rho$ be an admissible weight with local orders $N_1,\dots,N_r$. Then the space $\Lrho(\Sig)$ of harmonic one-forms on $\Sig$ that are tangential along $\dSig$ and satisfy
\[
\int_\Sig|\omega|^2\rho\dd\mu<\infty
\]
has real dimension
\[
\dim_\R\Lrho(\Sig)=2g+k-1+2\sum_{j=1}^rN_j-\varepsilon,
\]
where
\[
\varepsilon=
\begin{cases}
1,&\text{if }\ \sum_{j}N_j\ge1,\\[4pt]
0,&\text{if }\ N_1=\cdots=N_r=0.
\end{cases}
\]
\end{theorem}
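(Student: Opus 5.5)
Harmonic one-forms on a Riemann surface are exactly $\omega=\Real\phi$ with $\phi$ holomorphic (closed and coclosed means $\omega+i{*\omega}$, up to sign convention, is holomorphic). So the plan is to reduce Theorem~\ref{thm:dim-formula} to a Riemann--Roch count on the Schottky double $\Sdouble$ of $\Sgb$. The double has genus $\hat g=2g+k-1$ and carries an antiholomorphic involution $\sigma$ fixing $\dSgb$. The tangential condition along $\dSig$, meaning the normal component of the dual field vanishes, i.e.\ $\omega(\nu)=0$, is a reflection condition. Hence tangential harmonic forms on $\Sgb$ correspond to meromorphic one-forms $\phi$ on $\Sdouble$ with $\sigma^*\bar\phi=\pm\phi$, for the appropriate sign fixed by checking the condition $\omega(\nu)=0$ in a boundary chart. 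Such forms are determined by their restriction to $\Sig$, and the real dimension of the space of $\sigma$-real forms equals the complex dimension of the full space of forms with the symmetric pole divisor. Since $\sigma$ preserves the relevant divisor, $\sigma$ acts as a real structure on that space.

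First step: a local analysis at each puncture. Using admissibility (Definition~\ref{def:admissible}), where $\rho$ is comparable to a radial function $\rho_j(|z|)$ in a conformal disk $|z|<\delta$, I would show that an isolated singularity of $\phi$ at $p_j$ gives finite $\rho$-energy iff $\phi$ has at worst a pole of order $\le N_j$. Write $|\omega|^2\dd\mu=|\phi|^2\,|dz|^2$, which is conformally invariant. Expand $\phi$ in a Laurent series and use orthogonality of $z^m dz$ on circles against a radial weight. The energy is then $\sum_m |a_m|^2\int r^{2m}\rho_j\,r\,dr$, so finiteness forces $a_m=0$ for $m<-N_j$, and in particular rules out essential singularities. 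By the definition of $N_j$ these integrals are finite exactly for $m\ge -N_j$. Thus
\[
\Lrho(\Sig)\cong\{\phi\in H^0(\Sdouble,K(D)) : \sigma\text{-real}\},\qquad D=\sum_j N_j(p_j+\sigma(p_j)).
\]

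Second step: count. If all $N_j=0$, the space is $H^0(K)$, of complex dimension $\hat g$, giving $2g+k-1$ and $\varepsilon=0$. If $\deg D=2\sum N_j\ge1$, Riemann--Roch gives
\[
h^0(K+D)=\deg D+\hat g-1+h^0(-D)=2\sum_j N_j+\hat g-1,
\]
since $h^0(-D)=0$. This yields $2g+k-1+2\sum N_j-1$, i.e.\ $\varepsilon=1$. The drop of one comes from the residue theorem: the sum of residues must vanish. This matches the formula.

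Main obstacle: making the reflection correspondence rigorous for forms with only weighted integrability near the punctures, and getting the sign so that tangential, rather than normal, forms are the real ones. One must show that smooth tangential harmonic forms on $\Sigma$ extend by reflection to meromorphic forms on $\Sdouble$; the punctures are interior points, so the boundary is compact and smooth, and the reflection argument is purely local there. One must also check that the real-structure dimension equality holds, namely that $\phi\mapsto\sigma^*\bar\phi$ is a conjugate-linear involution on $H^0(K(D))$, so the fixed space has real dimension equal to the complex dimension. A subtle point is whether "tangential" might impose additional conditions, such as vanishing periods, beyond reflection symmetry. I would verify in the case $N=0$ against the known count $2g+k-1$ of Ambrozio--Carlotto--Sharp.
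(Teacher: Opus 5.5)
Your proposal is correct. It is the Riemann--Roch-on-the-double argument that the paper sketches in Remark~\ref{rem:double} but deliberately does not use, so it is a genuinely different route from the paper's proof.

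\textbf{The paper's route.} The paper never leaves $\Sgb$. After the same Laurent--Parseval truncation you describe (Lemma~\ref{lem:laurent}), it works with the principal-part map $P\colon\Lrho(\Sig)\to\bigoplus_j\R^{2N_j}$ in three steps:
\begin{itemize}
\item It identifies $\ker P$ with $\HH^1_T(\Sgb)$, whose dimension $2g+k-1$ it takes from Hodge--Morrey--Friedrichs.
\item It proves $\sum_j\Real(c_{-1,j})=0$ by Stokes' theorem for the closed form $\star\omega$; tangentiality is what kills the boundary integral.
\item It shows in Lemma~\ref{lem:sufficiency} that this is the only obstruction. It cuts off the local models and corrects them by $\eta=df+\star\,dg$, where $g$ solves a Dirichlet problem and $f$ a Neumann problem. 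The Neumann compatibility condition is exactly the residue relation.
\end{itemize}

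\textbf{Your route.} You fold the kernel count and the existence step into a single application of Riemann--Roch on $\Sdouble$. The unweighted count comes out as $h^0(K)=\hat g=2g+k-1$ instead of from Hodge theory on a manifold with boundary. The identity $h^0(K+D)=\hat g-1+\deg D$ produces forms with prescribed principal parts without solving any PDE.

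\textbf{Trade-off.} You pay with three ingredients: the construction of the double with its antiholomorphic involution, the reflection principle, and the real-structure lemma. In your approach the lost relation is also only identified after the fact, via the residue theorem on $\Sdouble$ and the fact that residues at $p_j$ and $\sigma(p_j)$ are conjugate. The paper's route is elementary and exhibits the obstruction explicitly.

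\textbf{The obstacles you flag are harmless.} Take a boundary chart in which $\dSig$ is the real axis and write $\omega=P\,dx+Q\,dy$. Then $\iota_\nu\omega=\pm Q$, so tangentiality says exactly that $h=P-iQ$ is real on the real axis. This is the Schwarz reflection condition $\sigma^*\phi=\bar\phi$ for $\phi=\omega+i\star\omega$. It is a pointwise condition, so it imposes no period constraints. The opposite sign would give a real form of the same dimension in any case, since $\phi\mapsto i\phi$ exchanges the two. The smoothness of $\omega$ up to $\dSig$, built into the definition of $\HH^1_T$, is what makes the reflection legitimate.
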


Here $2g+k-1$ is the dimension of the space of harmonic one-forms on $\Sgb$ which are tangential along $\dSgb$, the term $2N_j$ counts the real and imaginary parts of the coefficients of the principal part at $p_j$, and $\varepsilon$ comes from a linear relation among the residues, which follows from Stokes' theorem (Lemma~\ref{lem:residue-necessity}). The proof is elementary. After computing the Laurent expansion at each puncture (Lemma~\ref{lem:laurent}), we show that this relation is the only obstruction to prescribing the principal parts, by solving a Dirichlet and a Neumann problem on $\Sgb$ (Lemma~\ref{lem:sufficiency}). In Remark~\ref{rem:double} we observe that the same formula can be obtained from the Riemann--Roch theorem on the double of $\Sgb$.

In Section~\ref{sec:fbms} we go back to minimal surfaces. Using the asymptotic behavior $|X(z)|\asymp|z|^{-d_j}$ of an end of multiplicity $d_j$, we show that the weight \eqref{eq:weight-intro} is comparable near $p_j$ to $r^{2d_j}(\log(1/r))^{-2}$, and that $N_j=d_j+1$ (Lemma~\ref{lem:N-computation}). Since $N_j\ge2$, we have $\varepsilon=1$, and Theorem~\ref{thm:dim-formula} gives
\begin{equation}\label{eq:dim-intro}
\dim_\R\Lstar(\Sig)=2g+k+2\sum_{j=1}^r(d_j+1)-2,
\end{equation}
which is the number appearing in \eqref{eq:main}.

\subsection{The logarithmic cut-off}

The identity we use to compute the second variation is
\begin{equation}\label{eq:identity-intro}
\cQ(\varphi\xi,\varphi\xi)=\int_\Sig|\nabla\varphi|^2|\xi|^2\dd\mu-\int_{\dSig}H_{\dOm}|\xi|^2\dd\sigma,
\end{equation}
where $\xi$ is a harmonic field tangent to $\dSig$ and $\varphi$ is a cut-off function with $\varphi\equiv1$ near $\dSig$ (Lemma~\ref{lem:key-identity}). In order to conclude, we need the first term on the right-hand side to be small. If $\xi\in\Lsq(\Sig)$, this follows from the usual cut-off $\varphi_R$, with $|\nabla\varphi_R|\le C/R$. However, a field $\xi\in\Lstar(\Sig)$ is not in $L^2$ in general, and this estimate is not enough. Following \cite[\S4]{CM2}, we use instead a logarithmic cut-off $\lcut_R$, which satisfies $|\nabla\lcut_R|^2\le C_0\mskip1mu\rho$ with $C_0$ independent of $R$ (see \S\ref{subsec:cutoff}). Then the first term is bounded by $C_0\int_{\Sig\cap\{|x|\ge R\}}|\xi|^2\rho\dd\mu$, which goes to zero as $R\to+\infty$. In Remark~\ref{rem:optimality} we explain why a weight decaying faster than $\rho$ would not improve the estimate.

\subsection{An open question}\label{subsec:open-question}

Theorem~\ref{thm:weak} is proved with the space $\Lsq(\Sig)$, and so the ends do not contribute to the estimate. We do not know whether an estimate like \eqref{eq:main}, involving the ends, holds when we only assume $H_{\dOm}\ge0$ along $\dSig$. The proof of the bound $d\le1$ in Proposition~\ref{prop:dim-Lstar} uses that the field $\star\nabla\langle\vec a,X\rangle$ is not in $L^2(\Sig)$ unless every end has limiting normal $\pm\vec a/|\vec a|$. This argument does not work for the weighted space, since $\int_\Sig\rho\dd\mu<+\infty$ (Lemma~\ref{lem:rho-integrable}) and hence every bounded harmonic field, in particular every $\star\mskip1mu\xi_i$, has finite $\rho$-energy. We were not able to find a replacement for this argument.

\subsection{Organization}

Section~\ref{sec:prelim} fixes the geometric setting, the weight and the two cut-offs. Sections~\ref{sec:conformal} and \ref{sec:weighted} develop the conformal framework and prove Theorem~\ref{thm:dim-formula}. Section~\ref{sec:fbms} computes the local orders of \eqref{eq:weight-intro} and deduces \eqref{eq:dim-intro}. Section~\ref{sec:boundary} proves the identity \eqref{eq:identity-intro}, the strict positivity of the boundary term, and the bound $d\le1$. Section~\ref{sec:proof} proves Theorems~\ref{thm:main} and \ref{thm:weak}.

\section{Preliminaries on free boundary minimal surfaces}\label{sec:prelim}

Throughout this section, $\Om\subset\R^3$ is an unbounded domain with smooth boundary $\dOm$, and $\Sig$ is a connected, complete, noncompact, orientable, immersed free boundary minimal surface in $\Om$ with compact boundary $\dSig\subset\dOm$. We may assume that $\Sig$ is connected, since the index of a disjoint union is the sum of the indices of its components. We fix a unit normal vector field $N$ along $\Sig$ and denote by $\nu$ the outward unit conormal of $\dSig$ in $\Sig$. We write $D$ for the Levi-Civita connection of $\R^3$ and $\nabla$ for the induced connection on $\Sig$, and we denote by $X=(x_1,x_2,x_3)$ the position vector of the immersion. 

\subsection{Second variation and Morse index}\label{subsec:secondvar}

The Weingarten operator of $\Sig$ is $A(Y)=D_YN$, $Y\in\X(\Sig)$, and minimality of $\Sig$ means $\tr A=0$. The second fundamental form of $\dOm$ is 
\[
\II_{\dOm}(Y,Z)=\langle D_Y\eta,Z\rangle,\qquad Y,Z\in\X(\dOm),
\]
where $\eta$ is the outward unit normal of $\dOm$, and $H_{\dOm}=\tr\II_{\dOm}$ is the mean curvature of $\dOm$. The free boundary condition $T_x\Sig\perp T_x(\dOm)$ for $x\in\dSig$ is equivalent to $\nu=\eta$ along $\dSig$; in particular, $N(x)\in T_x(\dOm)$ for every $x\in\dSig$.

For an admissible variation with normal variational field $\phi N$, with $\phi\in C_c^\infty(\Sig)$, the second variation of area is the quadratic form
\begin{equation}\label{eq:Q}
\begin{aligned}
\cQ(\phi,\phi)&=\int_\Sig\bigl(|\nabla\phi|^2-|A|^2\phi^2\bigr)\dd\mu-\int_{\dSig}\II_{\dOm}(N,N)\mskip1mu\phi^2\dd\sigma\\[4pt]
&=-\int_\Sig\phi\mskip2mu\cJ\phi\dd\mu+\int_{\dSig}\phi\mskip2mu\cB\mskip1mu\phi\dd\sigma,
\end{aligned}
\end{equation}
where
\[
\cJ=\Delta+|A|^2\quad\text{and}\quad\cB\mskip1mu\phi=\p_\nu\mskip1mu\phi-\II_{\dOm}(N,N)\mskip1mu\phi
\]
are the Jacobi operator and the associated Robin-type boundary operator along $\dSig$; see \cite{ACS2,HS}. Functions $\phi$ need not vanish on $\dSig$. The Morse index $\Ind(\Sig)$ is the supremum of the dimensions of the subspaces of $C_c^\infty(\Sig)$ on which $\cQ$ is negative definite; $\Sig$ is \emph{stable} if $\Ind(\Sig)=0$, and we assume throughout that $\Ind(\Sig)<\infty$. Given smooth maps $Y=(y_1,y_2,y_3)$ and $Z=(z_1,z_2,z_3)$ from $\Sig$ to $\R^3$, one of compact support, we extend $\cQ$ to $\R^3$-valued maps by
\begin{equation}\label{eq:Q-vec}
\cQ(Y,Z):=\sum_{i=1}^3\cQ(y_i,z_i).
\end{equation}

\subsection{Conformal compactification and the ends}\label{subsec:conformal}

We first recall the structure theorem of Hong and Saturnino in the form in which we shall use it. Their result is stated for capillary surfaces of finite index in Riemannian $3$-manifolds, under the hypothesis $R_M+H_\Sig^2\ge0$ together with one of two alternatives, the first of which is that $\dSig$ be compact; in our situation this alternative holds by hypothesis, so no assumption on $H_{\dOm}$ is needed here.

\begin{theorem}[Hong--Saturnino, {\cite[Thm.~1.4]{HS}}]\label{thm:HS-structure}
Let $\Sig$ be a complete noncompact\linebreak free boundary minimal surface of finite Morse index immersed in $\Om\subset\R^3$, with compact boundary $\dSig\subset\dOm$. Then $\Sig$ is conformally equivalent to a compact\linebreak Riemann surface with boundary, punctured at finitely many points, each associated to an end of $\Sig$, and
\[
\int_\Sig|A|^2\dd\mu+\int_{\dSig}H_{\dOm}\dd\sigma<+\infty.
\]
\end{theorem}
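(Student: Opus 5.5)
The plan is to reduce to the boundaryless theory of Fischer-Colbrie, Osserman and Huber. Since $\dSig$ is compact, we can cut $\Sig$ along a compact set containing $\dSig$. What remains is a union of finitely many complete minimal ends with compact (interior) boundary, on which the free boundary condition and $\dOm$ play no role. The boundary integral in the conclusion is then automatic: $\dSig$ is a compact smooth curve and $H_{\dOm}$ is smooth, so $\int_{\dSig}H_{\dOm}\dd\sigma$ is finite. The content is therefore finiteness of $\int_\Sig|A|^2\dd\mu$ together with the conformal compactification.

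\emph{Step 1 (stability outside a compact set).} Since $\Ind(\Sig)<\infty$, there is a compact $K\subset\Sig$ containing a neighborhood of $\dSig$ such that $\cQ(\phi,\phi)\ge0$ for all $\phi\in C_c^\infty(\Sig\setminus K)$. We argue by contradiction. If no such $K$ exists, we inductively find functions $\phi_1,\phi_2,\dots$ with pairwise disjoint compact supports and $\cQ(\phi_i,\phi_i)<0$; they span a subspace of arbitrary dimension on which $\cQ$ is negative definite, contradicting finite index. For $\phi$ supported away from $\dSig$, the boundary term in \eqref{eq:Q} vanishes. Hence $\Sig\setminus K$ is a stable minimal surface in the usual (Dirichlet) sense, with compact boundary $\partial K$, and it is complete as a metric space with boundary.

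\emph{Step 2 (finite total curvature).} We apply Fischer-Colbrie's theorem to each component of $\Sig\setminus K$. Exhausting $\Sig\setminus K$ and normalizing first Dirichlet eigenfunctions gives a positive solution $u$ of $\cJ u=0$. The metric $u^2 g$ is complete away from $\partial K$ and has nonnegative Gaussian curvature, hence is parabolic. Since $\partial K$ is compact, $\Sig$ has finitely many ends. Fischer-Colbrie's argument shows that each end is conformally a punctured disk and that $\int_{\Sig\setminus K}|A|^2\dd\mu<\infty$. Here we use that $|A|^2=-2K_\Sig$ for minimal surfaces in $\R^3$, so this integral is minus twice the total curvature of the ends, and it is controlled via the Gauss map, which is a meromorphic function of finite degree on each puncture. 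Adding the finite contribution of the compact set $K$ gives $\int_\Sig|A|^2\dd\mu<\infty$.

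\emph{Step 3 (compactification).} Each end is conformally $\{0<|z|<1\}$. Filling in the punctures and gluing to the compact Riemann surface with boundary $K$ (whose boundary components are those of $\dSig$) yields $\Sgb$ with $\Sig\cong\Sgb\setminus\{p_1,\dots,p_r\}$. Alternatively, Huber's theorem applied to the complete finite total curvature surface $\Sig\setminus K$ with compact boundary gives the same conclusion.

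The main obstacle is Step 2: adapting Fischer-Colbrie's result to a stable surface that has a compact boundary and is merely complete. The standard treatment handles exactly this (stable outside a compact set), so I would cite it after checking that cutting off a neighborhood of $\dSig$ does not affect completeness.
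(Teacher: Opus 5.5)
Your argument is correct, but it takes a different route from the paper, which gives no proof of this statement. The paper imports Theorem~1.4 of \cite{HS}, stated for capillary surfaces in Riemannian $3$-manifolds with $R_M+H_\Sig^2\ge0$, and only observes that the compact-boundary alternative among its hypotheses holds. You instead use compactness of $\dSig$ to excise a compact neighborhood $K$ of the boundary, after which the free boundary condition and $\dOm$ play no role. You then fall back on the classical boundaryless theory of surfaces that are stable outside a compact set:
\begin{itemize}
\item a positive Jacobi field $u$ on $\Sig\setminus K$;
\item the metric $u^2g$, with curvature $u^{-2}\bigl(-K_\Sig+|\nabla\log u|^2\bigr)\ge0$;
\item finite topology and parabolicity of its ends;
\item meromorphy of the Gauss map at the punctures.
\end{itemize}
Your route makes clear why the compactification needs no sign condition on $H_{\dOm}$ (the point of Remark~\ref{rem:hypotheses}), and why the boundary integral in the conclusion is trivially finite. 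The citation buys generality that the paper never uses: curved ambient space, capillary angle, and a second alternative that does not require $\dSig$ to be compact.

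Four steps should be stated more carefully, though none is a real gap.
\begin{itemize}
\item Compactness of $\partial K$ only gives finitely many components of $\Sig\setminus K$. Finiteness of the genus and of the number of ends comes from Huber's theorem applied to the complete, nonnegatively curved surface $(\Sig\setminus K,u^2g)$.
\item Completeness of $u^2g$ is the substantive content of Fischer-Colbrie's theorem, not a formal consequence of $\cJ u=0$.
\item The meromorphic extension of the Gauss map is a second, independent use of finite index. It goes through Picard's theorem and test functions pulled back from $S^2$ by the Gauss map, using conformal invariance of the Dirichlet energy and $|A|^2\dd\mu=2\,g^\ast dA_{S^2}$. Conformal finiteness alone does not bound total curvature: the helicoid is conformal to $\C$ and has infinite total curvature.
\item In Step~3, first enlarge $K$ so that every component of $\Sig\setminus K$ is an annulus. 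Also note that $\partial K$ consists of $\dSig$ together with the cut curves, not of $\dSig$ alone; the punctured disks are glued back along those cut curves.
\end{itemize}
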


Write $\Sgb$ for the compactification and $p_1,\dots,p_r$ for the punctures. Since $\dSig$ is compact, no puncture can lie on $\dSgb$, so all punctures are interior points:
\begin{equation}\label{eq:model}
\Sig\ \cong\ \Sgb\setminus\{p_1,\dots,p_r\}, \qquad p_1,\dots,p_r\in\Sgb\setminus\dSgb, \qquad \dSgb=\dSig.
\end{equation}
In particular, $\Sig$ has finite genus $g$, $k$ boundary components, $r$ ends $\Ed_1,\dots,\Ed_r$, and finite total curvature. By the work of Jorge and Meeks \cite{JorgeMeeks} and Schoen \cite{Schoen}, each end is then asymptotic to a plane or to a half-catenoid, the unit normal $N$ converges along each end to a constant unit vector, and $\Sig$ is properly immersed; in particular, $\Sig\cap\overline{B}_s$ is compact for every $s>0$, where $B_s$ is the open Euclidean ball of radius $s$ centered at the origin.

We shall use the notion of multiplicity of an end and the associated asymptotics in the form given by Chodosh and Máximo \cite[\S2]{CM2}. Fix a holomorphic coordinate $z$ centered at $p_j$, defined on a disk $D_j$ with $D_j^\ast:=D_j\setminus\{p_j\}\subset\Sig$, and write $z=re^{i\theta}$. Then there is an integer $d_j\ge1$, the \emph{multiplicity} of the end $\Ed_j$, such that
\begin{equation}\label{eq:dist-end}
|X(z)|\ \asymp\ |z|^{-d_j} \quad \text{as} \quad z\to0,
\end{equation}
and such that the induced metric, written near $p_j$ in the conformal coordinate, takes the form $\lambda_j(z)^2|\mathrm{d}z|^2$ with
\begin{equation}\label{eq:conf-factor}
\lambda_j(z)\ \asymp\ |z|^{-(d_j+1)} \quad \text{as} \quad z\to0.
\end{equation}
Here and below, $a\asymp b$ means that the quotient of the two quantities is bounded above and below by positive constants, while $a\sim b$ means that it tends to $1$. The end is embedded exactly when $d_j=1$. We use \eqref{eq:dist-end} only through the two consequences
\begin{equation}\label{eq:end-consequences}
1+|X(z)|^2\ \asymp\ r^{-2d_j} \quad \text{and} \quad \log\bigl(2+|X(z)|\bigr)\ \sim\ d_j\log(1/r).
\end{equation}

\subsection{Harmonic vector fields and harmonic one-forms}\label{subsec:fields}

We denote by $\HH^1(\Sig)$ the space of \emph{harmonic tangent vector fields} on $\Sig$, that is, of $\xi\in\X(\Sig)$ satisfying
\begin{equation}\label{eq:harmonic-field}
\begin{cases}
\diver\xi=0,\\[2pt]
\langle\nabla_Y\xi,Z\rangle=\langle\nabla_Z\xi,Y\rangle,\ \text{ for all }\ Y,Z\in\X(\Sig),
\end{cases}
\end{equation}
by $\HH^1_T(\Sig)$ the subspace of fields that are tangent to the boundary, $\langle\xi,\nu\rangle=0$ along $\dSig$, and we set
\[
\Lsq(\Sig)=\Bigl\{\xi\in\HH^1_T(\Sig):\textstyle\int_\Sig|\xi|^2\dd\mu<+\infty\Bigr\}.
\]

To a tangent vector field $\xi$ we associate the one-form $\omega=\xi^\flat$. The two conditions in \eqref{eq:harmonic-field} read $d^\ast\omega=0$ and $d\mskip1mu\omega=0$, so $\xi\in\HH^1(\Sig)$ if and only if $\omega$ is a harmonic one-form; moreover, $\langle\xi,\nu\rangle=\iota_\nu\omega$, so $\xi$ is tangent to $\dSig$ if and only if $\omega$ is tangential. The pointwise norms agree, $|\xi|=|\omega|$. We pass freely between the two languages, and we use the same symbols $\HH^1(\Sig)$, $\HH^1_T(\Sig)$, $\Lsq(\Sig)$ for either space; the systematic treatment of harmonic one-forms is deferred to Section~\ref{sec:conformal}.

Let $\{E_1,E_2,E_3\}$ be the standard basis of $\R^3$ and let $\xi_i=E_i^\top=\nabla x_i$ be the tangential projection of $E_i$ onto $\Sig$. Since $\Sig$ is minimal, each $\xi_i$ is harmonic, and so is each $\star\mskip1mu\xi_i$, where $\star$ denotes the Hodge star operator; we set
\[
\Lst(\Sig)=\spann\{\star\mskip1mu\xi_1,\star\mskip1mu\xi_2,\star\mskip1mu\xi_3\}.
\]
For a tangent field $\xi$, we write throughout
\begin{equation}\label{eq:coordinates}
u_i=\langle\xi,E_i\rangle,\qquad g_i=\langle N,E_i\rangle\qquad(i=1,2,3),
\end{equation}
so that $\sum_iu_i^2=|\xi|^2$, $\sum_ig_i^2=1$, $N=(g_1,g_2,g_3)$ and, since $\xi$ is tangent to $\Sig$, $\sum_iu_ig_i=\langle\xi,N\rangle=0$. Note that $\sum_i|\xi_i|^2=2$ pointwise, so that the three fields $\xi_i$ cannot all belong to $L^2(\Sigma)$; equivalently, neither $\{\xi_1,\xi_2,\xi_3\}$ nor $\{\star\xi_1,\star\xi_2,\star\xi_3\}$ is contained in $L^2(\Sigma)$.

\subsection{The weight and the two cut-off functions}\label{subsec:cutoff}

Following \cite[\S3]{CM2}, we fix once and for all the weight
\begin{equation}\label{eq:rho-def}
\rho(x):=\frac{1}{(1+|x|^2)\bigl(\log(2+|x|)\bigr)^2},\qquad x\in\R^3,
\end{equation}
and we regard it, by restriction, as a positive smooth function on $\Sig$. The space
\[
\Lstar(\Sig):=\Bigl\{\xi\in\HH^1_T(\Sig):\textstyle\int_\Sig|\xi|^2\rho\dd\mu<+\infty\Bigr\}
\]
of \emph{weighted tangential harmonic fields} contains $\Lsq(\Sig)$, since $\rho\le\rho(0)\approx2.08$, and Section~\ref{sec:fbms} will show that the containment is strict and compute the deficiency. The following property of the weight will be used in \S\ref{subsec:open-question}.

\begin{lemma}\label{lem:rho-integrable}
The weight \eqref{eq:rho-def} is integrable on $\Sig$:
\[
\int_\Sig\rho\dd\mu<+\infty.
\]
Consequently, every bounded harmonic field on $\Sig$, and in particular each of $\xi_1,\xi_2,\xi_3$ and $\star\mskip1mu\xi_1,\star\mskip1mu\xi_2,\star\mskip1mu\xi_3$, has finite $\rho$-energy.
\end{lemma}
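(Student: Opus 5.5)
We prove integrability by splitting $\Sig$ into a compact piece and the ends. By the conformal model \eqref{eq:model}, we fix for each puncture $p_j$ a small closed coordinate disk $D_j$ with $D_j^\ast\subset\Sig$, and set $K:=\Sig\setminus\bigcup_j D_j^\ast$. Then $K$ is compact, being $\Sgb$ minus open neighbourhoods of the punctures, and it contains $\dSig$. Since $\rho$ is continuous and bounded by $\rho(0)$, we get $\int_K\rho\dd\mu\le\rho(0)\,|K|<\infty$. It therefore suffices to bound $\int_{D_j^\ast}\rho\dd\mu$ for each $j$.

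On $D_j^\ast$ we use the conformal coordinate $z=re^{i\theta}$, in which $\dd\mu=\lambda_j^2\,r\dd r\dd\theta$. By \eqref{eq:conf-factor} we have $\lambda_j^2\asymp r^{-2(d_j+1)}$. By \eqref{eq:end-consequences},
\[
\rho\asymp \frac{r^{2d_j}}{d_j^2\log^2(1/r)}
\]
for $r$ small. Combining these,
\[
\rho\,\lambda_j^2\, r\asymp \frac{1}{r\log^2(1/r)}.
\]
Substituting $t=\log(1/r)$ gives
\[
\int_0^{r_0}\frac{\dd r}{r\log^2(1/r)}=\int_{\log(1/r_0)}^\infty \frac{\dd t}{t^2}<\infty,
\]
which holds provided $r_0<1$. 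Shrinking the disks if necessary so that the asymptotics apply, we obtain $\int_{D_j^\ast}\rho\dd\mu<\infty$. Summing over $j$ proves the integrability.

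For the consequence, a bounded field $\xi$ satisfies $\int|\xi|^2\rho\le\sup|\xi|^2\int\rho<\infty$. Each $\xi_i=E_i^\top$ has $|\xi_i|\le1$, and the Hodge star is an isometry, so $|\star\xi_i|=|\xi_i|\le1$ as well.

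The only delicate point is that the local computation needs the matching exponents: the factor $r^{2d_j}$ from the weight must exactly cancel the factor $r^{-2d_j-2}$ from the area element, up to $r^{-1}$. It is the logarithmic factor in $\rho$ that then provides convergence. Without that factor the integral would diverge, like the area of a plane.
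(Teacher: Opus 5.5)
Your proof is correct and follows the paper's argument essentially verbatim: you reduce to the punctured disks $D_j^\ast$, combine $\rho\asymp r^{2d_j}(\log(1/r))^{-2}$ with $\lambda_j^2\asymp r^{-2(d_j+1)}$ to get an integrand comparable to $1/(r\log^2(1/r))$, and conclude with the substitution $t=\log(1/r)$. The consequence for bounded fields, via $|\star\xi_i|=|\xi_i|\le1$, is handled the same way as in the paper.
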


\begin{proof}
It suffices to bound the integral over each punctured coordinate disk $D_j^\ast$, the complement being compact. By \eqref{eq:end-consequences},
\begin{equation}\label{eq:rho-profile}
\rho\ \asymp\ r^{2d_j}\bigl(\log(1/r)\bigr)^{-2} \quad \text{as} \quad r\to0^+,
\end{equation}
while $\dd\mu=\lambda_j^2\dd x\dd y\asymp r^{-2(d_j+1)}\,r\dd r\dd\theta$ by \eqref{eq:conf-factor}. Hence
\[
\int_{D_j^\ast}\rho\dd\mu\ \asymp\ 2\pi\int_0^{R_1}\frac{\dd r}{r\bigl(\log(1/r)\bigr)^{2}},
\]
which the substitution $t=\log(1/r)$ identifies with a convergent integral of $t^{-2}$. The final assertion is immediate, since $|\xi_i|\le1$ and $\left|\star\mskip1mu\xi_i\right|=|\xi_i|$.
\end{proof}

We now describe the two cut-off constructions. Fix $R_0\ge2$ with $\dSig\subset B_{R_0}$, which is possible because $\dSig$ is compact. Recall from \S\ref{subsec:conformal} that $\Sig\cap\overline{B}_s$ is compact for every $s>0$.

\subsubsection*{The linear cut-off}

Adapting \cite[\S2]{CM1}, for every $R\ge R_0$ there is $\varphi_R\in C_c^\infty(\Sig)$ with $0\le\varphi_R\le1$, $\varphi_R\equiv1$ on $\Sig\cap B_R$, $\supp\varphi_R\subset\Sig\cap B_{2R}$, and
\[
\text{(iii)}\ \ |\nabla\varphi_R|\le\frac CR\ \text{ on }\ \Sig,\qquad \text{(iv)}\ \ |\varphi_R\Delta\varphi_R|\le\frac C{R^2}\ \text{ on }\ \Sig\cap(B_{2R}\setminus B_R),
\]
with $C>0$ independent of $R$; in particular, $\varphi_R\equiv1$ near $\dSig$ and $\varphi_R\to1$ pointwise. This cut-off is adequate for the unweighted theory: if $\xi\in\Lsq(\Sig)$ then (iii) gives
\begin{equation}\label{eq:linear-gradient}
\int_\Sig|\nabla\varphi_R|^2|\xi|^2\dd\mu\ \le\ \frac{C^2}{R^2}\mskip2mu\|\xi\|_{L^2(\Sig)}^2\ \longrightarrow\ 0 \qquad(R\to+\infty).
\end{equation}

\subsubsection*{The logarithmic cut-off}

For the weighted theory, the estimate \eqref{eq:linear-gradient} is unavailable: a field $\xi\in\Lstar(\Sig)$ need not be square integrable, and the annulus supporting $\nabla\varphi_R$ moves, as $R\to+\infty$, towards the poles that the weight admits. We therefore replace $\varphi_R$ by the logarithmic cut-off of Chodosh and Máximo \cite[\S4]{CM2}, calibrated to the weight \eqref{eq:rho-def}.

Fix $\chi\in C^\infty(\R)$ with
\[
\chi\equiv0\ \text{ on }\ (-\infty,0],\qquad\chi\equiv1\ \text{ on }\ [1,+\infty),\qquad 0\le\chi\le1,
\]
and set $c_\chi:=\sup_\R|\chi'|$, a constant depending only on the choice of $\chi$. For $R\ge R_0$, define
\[
\lcut_R:=\chi\circ\psi_R\Big|_\Sig,\quad\text{where}\quad\psi_R(x):=2-\frac{\log|x|}{\log R}\quad (x\ne0).
\]
We list the properties of $\lcut_R$ that will be used. If $0<|x|\le R$ then $\psi_R(x)\ge1$, and if $|x|\ge R^2$ then $\psi_R(x)\le0$; hence
\[
0\le\lcut_R\le1,\qquad\lcut_R\equiv1\ \text{ on }\ \Sig\cap B_R,\qquad
\supp\lcut_R\subset\Sig\cap\overline{B}_{R^2},
\]
so that $\lcut_R\in C_c^\infty(\Sig)$, $\lcut_R\equiv1$ on a neighborhood of $\dSig$, and $\lcut_R\to1$ pointwise as $R\to+\infty$. Since the gradient of $|x|$ along $\Sig$ is the tangential projection of a unit vector,
$\bigl|\nabla|x|\bigr|\le1$ and
\begin{equation}\label{eq:gradcutoff}
|\nabla\lcut_R|=|\chi'\circ\psi_R|\mskip2mu|\nabla\psi_R|\ \le\ \frac{c_\chi}{|x|\log R},
\end{equation}
while $\nabla\lcut_R$ vanishes outside the transition region $\mathcal A_R:=\Sig\cap(B_{R^2}\setminus B_R)$, where $\lcut_R$ is locally constant. The main property of $\lcut_R$ is the following pointwise comparison with the weight:
\begin{equation}\label{eq:cutoff-vs-rho}
|\nabla\lcut_R|^2\ \le\ C_0\mskip1mu\rho\ \text{ on }\ \Sig,\ \text{ with }\ C_0:=18\mskip1mu c_\chi^2\ \text{ independent of }\ R.
\end{equation}
Indeed, the left-hand side of \eqref{eq:cutoff-vs-rho} vanishes off $\mathcal A_R$, while on $\mathcal A_R$, assuming that $R\ge2$, one has $R\le|x|\le R^2$, whence $\log(2+|x|)\le\log(2R^2)\le3\log R$ and $1+|x|^2\le2|x|^2$; therefore, by \eqref{eq:gradcutoff} and the definition \eqref{eq:rho-def} of $\rho$,
\[
|\nabla\lcut_R(x)|^2\ \le\ \frac{c_\chi^2}{|x|^2(\log R)^2}\ \le\ c_\chi^2\mskip2mu\frac{2}{1+|x|^2}\cdot\frac{9}{\bigl(\log(2+|x|)\bigr)^2}\ = \ 18\mskip2mu c_\chi^2\mskip1mu\rho\mskip1mu(x).
\]

\smallskip
From \eqref{eq:cutoff-vs-rho} we obtain the following lemma, which is how the logarithmic cut-off will be used. In the proof, we use that $\Lstar(\Sig)$ is finite-dimensional (Corollary~\ref{cor:fbms}).

\begin{lemma}\label{lem:cutoff-to-zero}
Let $\xi\in\Lstar(\Sig)$. Then
\begin{equation}\label{eq:cutoff-to-zero-1}
\int_\Sig|\nabla\lcut_R|^2\mskip1mu|\xi|^2\dd\mu\ \le\ C_0\int_{\Sig\cap\{|x|\ge R\}}|\xi|^2\rho\dd\mu\ \longrightarrow\ 0\qquad(R\to+\infty).
\end{equation}
Moreover, there is a sequence $\epsilon_R\searrow0$, depending on $\Lstar(\Sig)$ but not on the individual field, such that
\begin{equation}\label{eq:cutoff-to-zero-2}
\int_\Sig|\nabla\lcut_R|^2\mskip1mu|\xi|^2\dd\mu\ \le\ C_0\mskip2mu\epsilon_R\mskip2mu\|\xi\|_\rho^2\ \text{ for all } R\ge R_0 \text{ and all } \xi\in\Lstar(\Sig),
\end{equation}
where $\|\xi\|_\rho^2:=\int_\Sig|\xi|^2\rho\dd\mu$.
\end{lemma}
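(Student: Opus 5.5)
The first estimate \eqref{eq:cutoff-to-zero-1} follows directly from the pointwise comparison \eqref{eq:cutoff-vs-rho}. Since $\nabla\lcut_R$ vanishes off the transition region $\mathcal A_R\subset\Sig\cap\{|x|\ge R\}$, we get
\[
\int_\Sig|\nabla\lcut_R|^2|\xi|^2\dd\mu\le C_0\int_{\mathcal A_R}|\xi|^2\rho\dd\mu\le C_0\int_{\Sig\cap\{|x|\ge R\}}|\xi|^2\rho\dd\mu .
\]
The right-hand side is the tail of the convergent integral $\int_\Sig|\xi|^2\rho\dd\mu$. The sets $\Sig\cap\{|x|\ge R\}$ decrease to the empty set as $R\to+\infty$, since every point of $\Sig$ has finite $|x|$. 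Dominated convergence, with dominating function $|\xi|^2\rho\in L^1(\Sig)$, then shows that this tail tends to $0$.

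For the uniform statement \eqref{eq:cutoff-to-zero-2} I would use that $V:=\Lstar(\Sig)$ is finite dimensional, as stated in Corollary~\ref{cor:fbms}. Equip $V$ with the inner product $\langle\xi,\zeta\rangle_\rho=\int_\Sig\langle\xi,\zeta\rangle\rho\dd\mu$. This is positive definite, because $\rho>0$ and a harmonic field vanishing on an open set vanishes identically. For each $R$, consider the quadratic form
\[
T_R(\xi):=\int_{\Sig\cap\{|x|\ge R\}}|\xi|^2\rho\dd\mu
\]
on $V$, and define $\epsilon_R:=\sup\{T_R(\xi):\|\xi\|_\rho=1\}$. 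This supremum is finite and at most $1$, and $T_R(\xi)\le\epsilon_R\|\xi\|_\rho^2$ by homogeneity. Combined with the first step, this gives \eqref{eq:cutoff-to-zero-2}.

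It remains to show that $\epsilon_R$ is nonincreasing and tends to $0$. Monotonicity is clear, since the domains of integration shrink as $R$ grows, so $T_R$ is nonincreasing in $R$. For the limit, fix a $\rho$-orthonormal basis $\eta_1,\dots,\eta_m$ of $V$ and write $\xi=\sum a_i\eta_i$ with $\sum a_i^2=1$. Cauchy--Schwarz gives the pointwise bound $|\xi|^2\le\sum_i|\eta_i|^2$, and hence
\[
\epsilon_R\le\sum_{i=1}^m T_R(\eta_i).
\]
Each $T_R(\eta_i)$ tends to $0$ by the first step, so $\epsilon_R\to0$. If strict monotonicity is wanted, one can replace $\epsilon_R$ by $\epsilon_R+1/R$.

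None of these steps is a genuine obstacle. The only nontrivial input is the finite dimensionality of $\Lstar(\Sig)$, together with the positivity of the $\rho$-inner product (unique continuation for harmonic fields). Both are taken from earlier in the paper.
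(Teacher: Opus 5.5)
Your proposal is correct and is essentially the paper's proof. The first estimate uses the same support property and the pointwise comparison \eqref{eq:cutoff-vs-rho}, and the uniform estimate uses the same $\rho$-orthonormal basis with pointwise Cauchy--Schwarz; your optimal constant $\epsilon_R=\sup_{\|\xi\|_\rho=1}T_R(\xi)$ is bounded by exactly the paper's choice $\int_{\Sig\cap\{|x|\ge R\}}\bigl(\sum_l|\eta_l|^2\bigr)\rho\dd\mu$, and positive definiteness of $\langle\cdot,\cdot\rangle_\rho$ needs only continuity of $\xi$ and $\rho>0$, not unique continuation.
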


\begin{proof}
Since $\nabla\lcut_R$ is supported in $\mathcal A_R\subset\Sig\cap\{|x|\ge R\}$, the comparison \eqref{eq:cutoff-vs-rho} gives the inequality in \eqref{eq:cutoff-to-zero-1}, whose right-hand side is the tail of a convergent integral. For the uniform statement, using that $\Lstar(\Sig)$ is finite-dimensional, fix a $\|\mskip1mu\cdot\mskip1mu\|_\rho$-orthonormal basis $\eta_1,\dots,\eta_m$ of $\Lstar(\Sig)$; writing $\xi=\sum_lc_l\eta_l$, the pointwise Cauchy--Schwarz inequality gives $|\xi|^2\le\|\xi\|_\rho^2\sum_l|\eta_l|^2$, whence \eqref{eq:cutoff-to-zero-2} holds with $\epsilon_R:=\int_{\Sig\cap\{|x|\ge R\}}\bigl(\sum_l|\eta_l|^2\bigr)\rho\dd\mu\searrow0$, the tail of the convergent integral 
\[
\int_\Sig\Big(\sum_l|\eta_l|^2\Big)\rho\dd\mu=\sum_l\|\eta_l\|_\rho^2=m.\qedhere
\]
\end{proof}

\subsection{Integrability estimates}\label{subsec:integrability}

The following facts will be used in the proof of Theorem~\ref{thm:weak}. They follow
from finite total curvature through the linear cut-off; compare
\cite[\S4]{CM2}.

\begin{lemma}\label{lem:aux}
Suppose $\Sig$ has finite Morse index. Then:
\begin{enumerate}[label=\rm(\alph*),itemsep=3pt,topsep=4pt]
\item $|\nabla\xi|\in L^2(\Sig)$ for every $\xi\in\Lsq(\Sig)$;
\item if $\phi:\Sig\to\R$ is smooth, square integrable, and satisfies $\cJ\phi+\lambda\phi=0$ on $\Sig$ for some $\lambda\in\R$, then $|\nabla\phi|\in L^2(\Sig)$;
\item $|\nabla(\phi\langle\xi,\vec a\rangle)|\in L^1(\Sig)$ for every $\xi$ and $\phi$ as in {\rm(a)} and {\rm(b)} and every $\vec a\in\R^3$.
\end{enumerate}
\end{lemma}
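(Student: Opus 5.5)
All three items will come from testing the relevant equation against $\varphi_R^2$ times the function or field, integrating by parts, and absorbing the cut-off error with the linear cut-off of \S\ref{subsec:cutoff}. Finite total curvature (Theorem~\ref{thm:HS-structure}) supplies $|A|\in L^2$. Since $|A|\to0$ along the ends (finite total curvature plus the asymptotics of \S\ref{subsec:conformal}), $|A|$ is also bounded.

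For (a) I will use the Bochner formula for harmonic one-forms on a surface:
\[
\tfrac12\Delta|\xi|^2=|\nabla\xi|^2+K|\xi|^2,\qquad K=-\tfrac12|A|^2 .
\]
I multiply it by $\varphi_R^2$ and integrate. The boundary term $\int_{\dSig}\langle\nabla_\nu\xi,\xi\rangle\dd\sigma$ is a fixed finite number, because $\dSig$ is compact and $\varphi_R\equiv1$ there; it does not depend on $R$, so its sign is irrelevant. Integrating by parts then gives
\[
\int\varphi_R^2|\nabla\xi|^2\le \tfrac12\int|A|^2|\xi|^2+2\int\varphi_R|\nabla\varphi_R||\xi||\nabla\xi|+C_{\dSig}.
\]
The middle term is at most $\tfrac12\int\varphi_R^2|\nabla\xi|^2+2\int|\nabla\varphi_R|^2|\xi|^2$. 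By \eqref{eq:linear-gradient}, the last of these stays bounded and in fact tends to $0$. The curvature term is finite because $|A|$ is bounded and $\xi\in L^2$. Letting $R\to\infty$ by monotone convergence gives $|\nabla\xi|\in L^2$.

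For (b) I test $\cJ\phi+\lambda\phi=0$ against $\varphi_R^2\phi$. This gives
\[
\int\varphi_R^2|\nabla\phi|^2=\int(|A|^2+\lambda)\varphi_R^2\phi^2-2\int\varphi_R\phi\langle\nabla\varphi_R,\nabla\phi\rangle+\int_{\dSig}\phi\,\p_\nu\phi\dd\sigma .
\]
The boundary term is again a fixed constant and $|A|$ is bounded, so the same absorption works using only $\phi\in L^2$. If one wants to avoid the boundedness of $|A|$, I would instead invoke the finite index to get a stability-type inequality outside a compact set and control $\int|A|^2\phi^2$ that way; I regard this as the one delicate point.

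For (c) I expand
\[
\nabla(\phi\langle\xi,\vec a\rangle)=\langle\xi,\vec a\rangle\nabla\phi+\phi\,\nabla\langle\xi,\vec a\rangle .
\]
The first term is in $L^1$ by Cauchy--Schwarz, since $|\xi|\in L^2$ and $|\nabla\phi|\in L^2$ by (b). For the second, differentiating $\langle\xi,\vec a\rangle$ along $\Sig$ gives
\[
\bigl|\nabla\langle\xi,\vec a\rangle\bigr|\le|\vec a|\bigl(|\nabla\xi|+|A||\xi|\bigr),
\]
where the $|A||\xi|$ part comes from the normal component of $D\xi$. Hence $|\phi\,\nabla\langle\xi,\vec a\rangle|\le|\vec a|\,|\phi|\bigl(|\nabla\xi|+|A||\xi|\bigr)$. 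This is in $L^1$ because $\phi\in L^2$, $|\nabla\xi|\in L^2$ by (a), and $|A||\xi|\in L^2$ since $|A|$ is bounded. The main obstacle throughout is justifying a uniform bound on $|A|$, or replacing it by the finite-index argument, for the curvature terms.
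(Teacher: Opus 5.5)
Your proposal is correct and follows the paper's proof essentially step by step: the Bochner formula with $K=-\tfrac12|A|^2$ tested against $\varphi_R^2$ for (a), the energy identity for $\varphi_R^2\phi$ with Young absorption for (b), and the tangential/normal splitting of $D\xi$ plus Cauchy--Schwarz for (c), all resting, as in the paper, on the uniform bound on $|A|$ coming from finite total curvature. The differences are cosmetic. In (a) you absorb the cross term by Young's inequality, whereas the paper moves both derivatives onto the cut-off using property (iv). Also, since $\varphi_R$ need not be monotone in $R$, the final limit should be taken by Fatou's lemma, or by using $\varphi_R\equiv1$ on $\Sig\cap B_R$, rather than by monotone convergence.
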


\begin{proof}
Throughout, $\varphi_R$ is the linear cut-off, so that $\varphi_R\equiv1$ near $\dSig$ and $\p_\nu\varphi_R=0$ along $\dSig$.

\emph{\rm(a)} Green's identity for the pair $\varphi_R^2$, $|\xi|^2$, the Bochner formula 
\[
\Delta|\xi|^2=2|\nabla\xi|^2+2K|\xi|^2
\]
for the harmonic form $\omega=\xi^\flat$, and $\Delta\varphi_R^2=2\varphi_R\Delta\varphi_R+2|\nabla\varphi_R|^2$ combine into
\[
\int_\Sig\varphi_R^2|\nabla\xi|^2\dd\mu=\int_\Sig|\xi|^2\bigl(\varphi_R\Delta\varphi_R+|\nabla\varphi_R|^2-\varphi_R^2K\bigr)\dd\mu+\int_{\dSig}\langle\xi,D_\nu\xi\rangle\dd\sigma.
\]
By properties (iii) and (iv) of the linear cut-off, and since finite total curvature bounds $|A|$, hence $-K=|A|^2/2$, uniformly, the first integrand is bounded by $3C_1|\xi|^2$; the boundary term is finite and independent of $R$, $\dSig$ being compact and $\xi$ smooth up to it. Letting $R\to+\infty$ proves (a).

\emph{\rm (b)} Integrating $\diver(\varphi_R^2\phi\nabla\phi)$ over $\Sig$, using $\Delta\phi=-(|A|^2+\lambda)\phi$ and absorbing the cross term $-2\varphi_R\phi\langle\nabla\varphi_R,\nabla\phi\rangle$ by Young's inequality, one obtains
\[
\frac12\int_\Sig\varphi_R^2|\nabla\phi|^2\dd\mu\ \le\ \int_{\dSig}|\phi\mskip2mu\p_\nu\phi|\dd\sigma+\int_\Sig\bigl(|A|^2+|\lambda|+2|\nabla\varphi_R|^2\bigr)\phi^2\dd\mu,
\]
and (b) follows upon letting $R\to+\infty$, the right-hand side being bounded uniformly in $R$.

\emph{\rm (c)} Using \eqref{eq:harmonic-field}, one computes
\[
\nabla\langle\vec a,\xi\rangle=(\nabla\xi)\bigl(\vec a^\top\bigr)-\langle\vec a,N\rangle A(\xi),
\]
whence $\nabla\langle\vec a,\xi\rangle\in L^2(\Sig)$, since $|\nabla\xi|$ lies in $L^2(\Sig)$ by (a) and $|A|$ is bounded. The Leibniz rule and Hölder's inequality then give 
\[
\bigl|\nabla(\phi\langle\vec a,\xi\rangle)\bigr|\le|\phi|\mskip2mu|\nabla\langle\vec a,\xi\rangle|+|\vec a|\mskip2mu|\xi|\mskip2mu|\nabla\phi|\in L^1(\Sig),
\]
all four factors lying in $L^2(\Sig)$ by (a) and (b).
\end{proof}

\subsection{Auxiliary results}\label{subsec:aux}

A key ingredient is the following result of Ros \cite[\S2]{Ros}.

\begin{theorem}[Ros]\label{thm:Ros}
Let $\xi\in\HH^1(\Sig)$. Then
\[
\cJ\langle\xi,\vec a\rangle=-2\langle\nabla\xi,A\rangle\langle N,\vec a\rangle
\]
for every parallel vector field $\vec a\in\R^3$. Moreover, if $\Sig$ is not totally geodesic, then $\langle\nabla\xi,A\rangle\equiv0$ if and only if $\xi\in\Lst(\Sig)$.
\end{theorem}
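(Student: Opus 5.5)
We verify Ros' identity by computing in a local orthonormal frame, and we obtain the characterization of the kernel from holomorphic data. Write $u=\langle\xi,\vec a\rangle$ and decompose $\vec a=\vec a^\top+\langle N,\vec a\rangle N$. Since $\vec a$ is parallel in $\R^3$, the Gauss and Weingarten formulas give
\[
\nabla u=(\nabla\xi)(\vec a^\top)-\langle\vec a,N\rangle A(\xi),
\]
where we use the symmetry of $\nabla\xi$ from \eqref{eq:harmonic-field}. This is the formula already used in the proof of Lemma~\ref{lem:aux}(c).

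We differentiate once more and take the trace. Four kinds of terms appear.
\begin{enumerate}[label=\rm(\roman*)]
\item The term $\sum_i(\nabla^2_{e_i,\vec a^\top}\xi)$ paired with $e_i$. By the Ricci identity and $\diver\xi=0$, $d\xi^\flat=0$, this becomes $(\Delta\xi)(\vec a^\top)$. The Bochner--Weitzenböck formula for harmonic forms gives $\Delta\xi=K\xi$, so this term equals $K\langle\xi,\vec a^\top\rangle$.
\item The term from $\nabla_{e_i}\vec a^\top=\langle\vec a,N\rangle A e_i$. It contributes $\langle\vec a,N\rangle\langle\nabla\xi,A\rangle$.
\item The term from differentiating $\langle\vec a,N\rangle$, namely $\langle\vec a,Ae_i\rangle$. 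Paired with $A(\xi)$, it gives $-\langle A^2\xi,\vec a^\top\rangle$.
\item The term $-\langle\vec a,N\rangle\diver(A\xi)$. Codazzi and $\tr A=0$ make $\diver A=0$, so this reduces to $-\langle\vec a,N\rangle\langle A,\nabla\xi\rangle$.
\end{enumerate}
On a surface with $\tr A=0$ we have $A^2=\tfrac12|A|^2\,\mathrm{Id}$ and $K=-\tfrac12|A|^2$. Hence terms (i) and (iii) combine to $-|A|^2\langle\xi,\vec a\rangle=-|A|^2u$, using $\langle\xi,\vec a^\top\rangle=u$.

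The bookkeeping of signs in (ii) and (iv) must be redone carefully. I expect the correct combination to give $\Delta u=-|A|^2u-2\langle\nabla\xi,A\rangle\langle N,\vec a\rangle$: the second-derivative term in (i) produces a further $-\langle\vec a,N\rangle\langle\nabla\xi,A\rangle$ once $\nabla^2_{e_i,\vec a^\top}$ is expanded with $\nabla_{e_i}\vec a^\top$. Then $\cJ u=-2\langle\nabla\xi,A\rangle\langle N,\vec a\rangle$, as claimed. This sign and term tracking is the only delicate part of the identity.

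For the second statement we pass to a conformal coordinate $z$. Let $h\,dz$ be the holomorphic form attached to $\xi$, where $\xi^\flat-i\star\xi^\flat$ is holomorphic. Let $\Phi\,dz^2$ be the Hopf differential, so that $A$ corresponds to $\Real(\Phi\,dz^2)$. The pairing of the symmetric, traceless tensor $\nabla\xi$ with $A$ is then a multiple of $\lambda^{-4}\Real(\Phi\,\overline{\partial_z h})$, since $\nabla\xi^\flat$ is the real part of $\partial_zh\,dz^2$ plus connection terms that cancel in the pairing.

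\emph{Forward direction.} If $\xi=\star\xi_i$, then $h$ is a real multiple of $i\,\partial_zx_i$ up to sign. Since $\partial_z^2 x_i=\Phi$-related terms via the Weierstrass representation, one gets $\Real(\Phi\,\overline{\partial_z h})=0$ directly. More simply, for $\xi=\star\nabla x_i$ the field $\nabla\xi$ is the rotation of $\nabla^2x_i=g_iA$, so $\langle\nabla\xi,A\rangle=g_i\langle JA,A\rangle=0$, because $JA\perp A$ for traceless symmetric $A$.

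\emph{Converse.} The condition $\langle\nabla\xi,A\rangle\equiv0$ says that $\partial_zh\cdot\overline{\Phi}$ is purely imaginary, with $\Phi\not\equiv0$. Using the Weierstrass data $(\phi_1,\phi_2,\phi_3)$, with $\partial_z x_i=\phi_i$ and $\phi_1^2+\phi_2^2+\phi_3^2=0$, we write $h=\sum c_i\phi_i$ locally with $c_i$ meromorphic, since two of the $\phi_i$ span. We then show that $\Real(\partial_z h\,\overline\Phi)=0$ forces the $c_i$ to be purely imaginary constants. Off the isolated zeros of $\Phi$, the condition says $\partial_zh/\Phi$ is imaginary and holomorphic, hence an imaginary constant $i c$. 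Comparing with $\partial_z\phi_i$, which is proportional to $\Phi$ times the Gauss-map components, gives $h=i\sum c_i\phi_i$ plus a term killed by this comparison. Thus $\xi\in\Lst(\Sig)$, with the real span following from connectedness.

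The main obstacle is this converse. One has to justify why the solution space of $\partial_z h\in i\R\,\Phi$ is exactly three-dimensional and equals the span of the $i\phi_i$. I would try it first via the Gauss map $G$: with $\partial_z\phi_i=\Phi\,G_i$-type identities, the equation becomes $\partial_z h=i\,\vec c\cdot\partial_z\vec\phi$ with $\vec c$ a locally constant real vector, which integrates on each coordinate patch and globalizes by analytic continuation.
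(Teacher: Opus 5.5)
The paper quotes this result from Ros and gives no proof, so your proposal has to stand on its own. It has a genuine gap in the converse implication, in two places. First, $\langle\nabla\xi,A\rangle$ is not a multiple of $\Real(\Phi\,\overline{\partial_zh})$. The reason is that $\partial_zh$ is not a quadratic differential: the $(2,0)$-part of $\nabla\omega$ is $Q\,dz^2$ with $Q=h_z-2(\partial_z\log\lambda)h$, and the Christoffel term does not drop out of the pairing. With $\phi=X_z$ one has $\partial_z\phi_i=2(\partial_z\log\lambda)\phi_i+\Phi g_i$, so it is $Q$, not $h_z$, that is proportional to $\Phi$. Concretely, on Enneper's surface take $\phi=\frac12\bigl(1-z^2,\,i(1+z^2),\,2z\bigr)$, so that $\Phi\equiv\pm1$. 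The form $\star dx_1\in\Lst(\Sig)$ has $h=-i(1-z^2)$, hence $\Real(h_z\overline\Phi)=\pm2\Imag z\not\equiv0$. So your reformulated condition already excludes genuine elements of the kernel. Second, even with the correct $Q$, the step ``$Q/\Phi$ is imaginary and holomorphic, hence constant'' is unavailable, because $Q$ is not holomorphic: $\partial_{\bar z}Q=\frac12K\lambda^2h$. Indeed, for $\omega=\star d\ell_{\vec b}$, that is $h=-2i\langle\vec b,\phi\rangle$, one gets $Q/\Phi=-2i\langle\vec b,N\rangle$, which is not constant. Your final paragraph then simply posits the conclusion $\partial_zh=i\,\vec c\cdot\partial_z\vec\phi$ with $\vec c$ constant.

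The missing ingredient is the Gauss equation, which enters through the Ricci identity. Work on $\{A\neq0\}$, which is connected and dense because the Hopf differential is holomorphic and $\not\equiv0$. There $\nabla\xi$ is symmetric, traceless and orthogonal to $A$, so $\nabla\xi=f\,JA$ for a smooth $f$, where $JY=N\times Y$. Now $JA$ is a traceless Codazzi tensor, and $(\nabla_X\nabla\xi)(Y,Z)-(\nabla_Y\nabla\xi)(X,Z)=K\bigl(\langle Y,\xi\rangle\langle X,Z\rangle-\langle X,\xi\rangle\langle Y,Z\rangle\bigr)$. Tracing in $X,Z$ gives $JA(\nabla f)=K\xi$. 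Using $(JA)^2=\tfrac12|A|^2\,\mathrm{Id}$ and $K=-\tfrac12|A|^2$, this becomes $\nabla f=-JA\xi$. The two equations $\nabla\xi=fJA$ and $\nabla f=-JA\xi$ say exactly that $\vec b:=\xi\times N-fN$ satisfies $D\vec b=0$. Hence $\xi=N\times\vec b=J\nabla\ell_{\vec b}\in\Lst(\Sig)$ on a dense set, and so on all of $\Sig$.

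A smaller point concerns the identity. With $A(Y)=D_YN$ one has $\nabla_{e_i}\vec a^\top=-\langle\vec a,N\rangle Ae_i$, so (ii) equals $-\langle\vec a,N\rangle\langle\nabla\xi,A\rangle$, and (ii)+(iv) already produce the factor $-2$. The Hessian trace in (i) is exactly $K\langle\xi,\vec a^\top\rangle$; extracting a ``further'' term from it, as you suggest, would double count (ii). Your forward implication is fine.
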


Applying Theorem~\ref{thm:Ros} with $\vec a=E_i$ and $\cJ=\Delta+|A|^2$, the coordinate functions \eqref{eq:coordinates} of a harmonic field $\xi\in\HH^1(\Sig)$ satisfy
\begin{equation}\label{eq:Jui}
\Delta u_i=-|A|^2u_i-2\mskip1mu g_i\langle\nabla\xi,A\rangle,\qquad i=1,2,3;
\end{equation}
equivalently, $\cJ\xi=-2\langle\nabla\xi,A\rangle N$, so that $\langle\xi,\cJ\xi\rangle=0$, since $\xi$ is tangent to $\Sig$.

The next lemma is the unique continuation principle at the free boundary.\linebreak It concerns harmonic forms with vanishing Cauchy data on a boundary arc; see \cite[Thm.~3.4.4]{Schwarz}, and also \cite[Lem.~2.1]{HS}.

\begin{lemma}\label{lem:unique-cont}
Let $\omega$ be a harmonic one-form on $\Sig$, tangential along $\dSig$ and smooth up to $\dSig$. If $\omega\equiv0$ on a nonempty open arc $U\subset\dSig$, then $\omega\equiv0$ on $\Sig$.
\end{lemma}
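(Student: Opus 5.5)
The plan is to reduce the statement to the identity theorem for holomorphic functions, using Schwarz reflection across the arc $U$. The tangentiality hypothesis will play no role. The assumption that $\omega\equiv0$ on $U$ is understood pointwise as a form, i.e.\ both components of $\omega$ vanish at every point of $U$. This is the ``vanishing Cauchy data'' condition, and it is exactly what reflection needs.

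\emph{Step 1: local holomorphic description.} In a local conformal coordinate $z=x+iy$ on $\Sig$, write $\omega=a\dd x+b\dd y$ and set $f:=a-ib$. Then
\[
d\omega=0 \iff \p_xb-\p_ya=0,\qquad d^\ast\omega=0 \iff \p_xa+\p_yb=0 .
\]
These two equations are precisely the Cauchy--Riemann equations for $f$, so $f$ is holomorphic and $\omega=\Real(f\dd z)$. A change of conformal coordinate multiplies $f$ by the nonvanishing holomorphic factor $dw/dz$. Hence the zero set of $\omega$ in a chart coincides with the zero set of $f$, independently of the chart.

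\emph{Step 2: reflection at a point of $U$.} Fix $q\in U$ and a boundary conformal chart around $q$. Such a chart identifies a neighborhood of $q$ in $\Sgb$ with a half-disk $D^+=\{|z|<\delta,\ \Imag z\ge0\}$ and sends $\dSig$ to the real segment $(-\delta,\delta)$. Shrinking $\delta$, we may assume this segment lies in the image of $U$. Since $\omega$ is smooth up to $\dSig$, the function $f$ is holomorphic in the open half-disk and continuous up to $(-\delta,\delta)$, where it vanishes identically. In particular $f$ is real on the segment, so the Schwarz reflection $\tilde f(z):=\overline{f(\bar z)}$ for $\Imag z<0$ extends $f$ to a holomorphic function on the full disk $\{|z|<\delta\}$. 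This extension vanishes on a segment, so the identity theorem gives $\tilde f\equiv0$. Therefore $\omega\equiv0$ on a nonempty open subset of $\Sig$.

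\emph{Step 3: propagation.} Let $Z$ be the interior of the zero set of $\omega$ in $\Sig\setminus\dSig$; by Step 2 it is nonempty. $Z$ is open by definition. It is also relatively closed in $\Sig\setminus\dSig$: if $q'$ lies in its closure, take a connected conformal disk around $q'$. There $f$ is holomorphic and vanishes on a nonempty open subset, hence vanishes on the whole disk, so $q'\in Z$. Since $\Sig$ is connected (as assumed in \S\ref{sec:prelim}), so is its interior $\Sig\setminus\dSig$. Hence $Z=\Sig\setminus\dSig$, and by continuity $\omega\equiv0$ on all of $\Sig$.

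The only delicate point is Step 2, specifically the regularity at the boundary that justifies Schwarz reflection. The reflection principle in the form above needs only continuity of $f$ up to the segment together with $f$ being real there, and this is supplied by smoothness of $\omega$ up to $\dSig$. I therefore expect no real obstacle.
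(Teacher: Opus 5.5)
Your argument is correct, but it does not follow the paper's route. The paper gives no proof of its own here. It simply invokes the general unique continuation theorem for harmonic fields with vanishing Cauchy data on an open piece of the boundary, \cite[Thm.~3.4.4]{Schwarz} (see also \cite[Lem.~2.1]{HS}), an Aronszajn-type result valid in any dimension and form degree.

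You instead use the two-dimensional dictionary $\omega=\Real(f\dd z)$, the same one the paper sets up in \S\ref{subsec:harmonic}, and reduce the statement to Schwarz reflection and the identity theorem. Your route is self-contained and elementary, needs only continuity of $\omega$ up to $U$, and makes it transparent that tangentiality plays no role. The citation buys generality, at the cost of a black box.

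One point you should make explicit: Step~2 needs a conformal boundary chart that is a $C^1$ diffeomorphism up to $\dSig$, so that $f$ is continuous up to the real segment. This is part of the structure of $\Sgb$ as a bordered Riemann surface compatible with its smooth structure. It can also be obtained by extending the metric across $\dSig$, taking isothermal coordinates, and straightening the boundary curve by a conformal map smooth up to the boundary (Kellogg--Warschawski).

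Alternatively, you can skip the straightening. In isothermal coordinates for the extended metric, let $F$ equal $f$ on the side of $\Sig$ and $0$ on the other side. Then $F$ is continuous and holomorphic off a smooth arc, hence holomorphic by Morera's theorem, and it vanishes on an open set.
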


The statement is local near the compact curve $\dSig$ and makes no reference to the behavior of $\omega$ at infinity; in particular, it applies to every $\omega\in\Lstar(\Sig)$ as well as to every $\omega\in\Lsq(\Sig)$.

Finally, we record the $L^2$ spectral characterization of the index due to Hong and Saturnino \cite[Prop.~3.13]{HS}, which plays in the free boundary setting the role of the theorem of Fischer-Colbrie \cite[Prop.~2]{FC} in the boundaryless one.

\begin{theorem}[Hong--Saturnino]\label{thm:HS}
Suppose that $\Sig$ has finite Morse index and put $n=\Ind(\Sig)$. Then there is an $n$-dimensional subspace $W\subset L^2(\Sig)$ admitting an orthonormal basis consisting of smooth functions $\phi_1,\dots,\phi_n$ such that
\[
\begin{cases}
\cJ\phi_i+\lambda_i\phi_i=0 & \text{on }\Sig,\\
\cB\mskip1mu\phi_i=0 & \text{along }\dSig,
\end{cases}
\]
with $\lambda_1,\dots,\lambda_n<0$, and such that
\[
\cQ(f,f)\ge0\ \text{ for every }\ f\in C_c^\infty(\Sig)\cap W^\perp.
\]
\end{theorem}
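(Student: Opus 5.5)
We would prove this by exhausting $\Sig$ with compact pieces, solving a mixed eigenvalue problem on each, and passing to the limit, in the spirit of Fischer-Colbrie's argument. The two main steps are constructing the eigenfunctions $\phi_1,\dots,\phi_n$ and then proving that $\cQ$ is nonnegative on $W^\perp$.

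\textbf{Step 1: the truncated problems.} Fix $R\ge R_0$ and set $\Sig_R:=\Sig\cap B_R$, taking $R$ regular so that $\Sig_R$ is a compact surface. On $\Sig_R$ we impose the Robin condition $\cB\phi=0$ along $\dSig$ and the Dirichlet condition $\phi=0$ on $\Sig\cap\p B_R$. The resulting quadratic form is $\cQ$ restricted to $H^1$ functions vanishing on $\Sig\cap\p B_R$.

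This form is bounded below on $\Sig_R$ for the following reasons.
\begin{itemize}
\item By Theorem~\ref{thm:HS-structure} and the Jorge--Meeks/Schoen asymptotics, $|A|$ is bounded on $\Sig$.
\item $\II_{\dOm}$ is bounded on the compact curve $\dSig$.
\item The trace inequality on a fixed collar of $\dSig$ gives $\int_{\dSig}\phi^2\le\delta\int|\nabla\phi|^2+C_\delta\int\phi^2$.
\end{itemize}
Together these yield $\cQ(\phi,\phi)\ge\frac12\int|\nabla\phi|^2-C\int\phi^2$, with $C$ independent of $R$.

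Let $\lambda_1(R)\le\lambda_2(R)\le\cdots$ be the mixed eigenvalues on $\Sig_R$. By domain monotonicity each $\lambda_i(R)$ is nonincreasing in $R$ and bounded below by $-C$. Every compactly supported function lives in some $\Sig_R$, so the number of negative eigenvalues on $\Sig_R$ equals $\Ind(\Sig_R)$, which increases to $n$. Hence for $R\ge R_1$ exactly $\lambda_1(R),\dots,\lambda_n(R)$ are negative, and they satisfy $\lambda_i(R)\le\lambda_n(R_1)<0$.

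\textbf{Step 2: passage to the limit.} Take $L^2$-orthonormal eigenfunctions $\phi_i^R$ on $\Sig_R$, extended by zero. The lower bound from Step~1 gives a uniform $H^1$ bound on them. Elliptic estimates, interior and up to $\dSig$, give local smooth convergence along a subsequence to smooth solutions $\phi_i$ of $\cJ\phi_i+\lambda_i\phi_i=0$ on $\Sig$ with $\cB\phi_i=0$ along $\dSig$, where $\lambda_i=\lim\lambda_i(R)\le\lambda_n(R_1)<0$.

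The main obstacle is ruling out loss of $L^2$ mass at infinity, which is needed to keep orthonormality in the limit. We would first use finite index to find a compact set $K\supset\dSig$ such that $\Sig\setminus K$ is stable for compactly supported variations away from $\dSig$. Next, we multiply $\phi_i^R$ by a cut-off $1-\varphi_{s}$ that vanishes on $K$. Stability of the outer part, combined with $\lambda_i(R)\le-c<0$, then bounds $\int_{\Sig\setminus B_{2s}}(\phi_i^R)^2$ by $C/s^2$ times the total mass, uniformly in $R$. The point is that the cut-off error terms only involve $|\nabla\varphi_s|^2\le C/s^2$, while the eigenvalue equation contributes the strictly negative term $\lambda_i\int(\phi_i^R)^2(1-\varphi_s)^2$, which stability forbids from being large. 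This tightness lets us pass orthonormality to the limit, so $\phi_1,\dots,\phi_n$ are orthonormal and span an $n$-dimensional $W\subset L^2(\Sig)$.

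\textbf{Step 3: nonnegativity on $W^\perp$.} Suppose $f\in C_c^\infty(\Sig)\cap W^\perp$ satisfies $\cQ(f,f)<0$. Integrating by parts and using $\cB\phi_i=0$ gives $\cQ(\phi_i,f)=\lambda_i\langle\phi_i,f\rangle=0$, and similarly $\cQ(\phi_i,\phi_j)=\lambda_i\delta_{ij}$. Hence $\cQ$ is negative definite on the $(n+1)$-dimensional space $\spann\{\phi_1,\dots,\phi_n,f\}$.

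Next we replace each $\phi_i$ by $\varphi_R\phi_i$. By Lemma~\ref{lem:aux}(b), $|\nabla\phi_i|\in L^2$, and $|\nabla\varphi_R|\le C/R$, so every entry of the Gram matrix of $\cQ$ changes by $o(1)$ as $R\to\infty$. For large $R$ we therefore obtain $n+1$ compactly supported functions on which $\cQ$ is negative definite. This contradicts $\Ind(\Sig)=n$ and proves the statement.
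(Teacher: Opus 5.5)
Your argument is correct. The paper gives no proof of its own here: it imports the statement from \cite[Prop.~3.13]{HS}, the free boundary analogue of Fischer-Colbrie's \cite[Prop.~2]{FC}. Your argument is exactly that standard scheme, adapted to the boundary condition $\cB\phi=0$: exhaustion by mixed Robin--Dirichlet problems on $\Sig\cap B_R$, uniform $L^2$-tightness from stability outside a compact set combined with $\lambda_i(R)\le\lambda_n(R_1)<0$, and the final Gram-matrix contradiction with $\Ind(\Sig)=n$.
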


\section{Harmonic one-forms on a punctured bordered Riemann surface}\label{sec:conformal}

In this section and in the next one we do not use minimal surfaces. We consider a compact Riemann surface with boundary, punctured at finitely many interior points, together with a weight, and we compute the dimension of the space of tangential harmonic one-forms which are square integrable with respect to the weight.

\subsection{The setting}\label{subsec:setting}

Throughout Sections~\ref{sec:conformal} and \ref{sec:weighted}, let $\Sgb$ be a compact Riemann surface with nonempty boundary $\dSgb$, of genus $g$ and with $k\ge1$ boundary components. Let $p_1,\dots,p_r$ be distinct points of $\Sgb\setminus\dSgb$, which we call the \emph{punctures}, and set
\begin{equation}\label{eq:setting}
\Sig:=\Sgb\setminus\{p_1,\dots,p_r\}.
\end{equation}
Thus $\Sig$ is a surface with compact boundary $\dSig=\dSgb$, noncompact as soon as $r\ge1$. We fix once and for all a smooth Riemannian metric $h$ on $\Sgb$ compatible with the conformal structure; by Lemma~\ref{lem:conf-inv} below, nothing in what follows depends on this choice. For each $j$, we fix a holomorphic coordinate
\[
z: D_j\to\C,\qquad z(p_j)=0,
\]
defined on a coordinate disk $D_j\subset\Sgb\setminus\dSgb$, with the $D_j$ pairwise disjoint, and we write $D_j^\ast=D_j\setminus\{p_j\}\subset\Sig$, $z=re^{i\theta}$, and $R_1>0$ for a radius with $\{|z|\le R_1\}\subset D_j$ for every $j$. By \eqref{eq:model}, a complete noncompact free boundary minimal surface of finite Morse index with compact boundary is exactly of this form, and the notation is consistent with that of Section~\ref{sec:prelim}.

\subsection{Harmonic forms, tangentiality, and complexification}\label{subsec:harmonic}

On one-forms on an oriented Riemannian surface, the Hodge star operator $\star$ is the pointwise rotation by $+\pi/2$: in a positively oriented orthonormal coframe $\{e^1,e^2\}$,
\begin{equation}\label{eq:star}
\star\mskip1mu e^1=e^2,\qquad \star\mskip1mu e^2=-e^1,\qquad \star\mskip1mu\star=-\mathrm{id}.
\end{equation}
On functions and two-forms, $\star\mskip1mu1=\dd\mu$ and $\star\dd\mu=1$; in particular, $\star\mskip1mu\star=\mathrm{id}$ in either case. The codifferential on one-forms is $d^\ast=-\star d\mskip1mu\star$, and a one-form $\omega$ is \emph{harmonic} if
\[
d\mskip1mu\omega=0\quad\text{and}\quad d^\ast\omega=0.
\]
Since $\star\mskip1mu\star=\mathrm{id}$ on two-forms on surfaces, the condition $d^\ast\omega=0$ is equivalent to
\begin{equation}\label{eq:star-omega-closed}
d(\star\mskip1mu\omega)=0,
\end{equation}
which will be used in \S\ref{subsec:necessity}.

Along $\dSig$, let $\tau$ be the positively oriented unit tangent and $\nu$ the outward unit conormal. We orient $\Sig$ so that $\{\nu,\tau\}$ is a positively oriented orthonormal frame along the boundary, which is the convention that makes Stokes' theorem hold in the usual form. A one-form is \emph{tangential} if $\iota_\nu\omega=0$ and \emph{normal} if $\iota_\tau\omega=0$. Taking $e^1=\nu^\flat$ and $e^2=\tau^\flat$ in \eqref{eq:star} gives
\begin{equation}\label{eq:star-frame}
\star\mskip1mu\nu^\flat=\tau^\flat,\qquad \star\mskip1mu\tau^\flat=-\nu^\flat\ \text{ along }\ \dSig,
\end{equation}
so that $\star$ exchanges the two boundary conditions:
\begin{equation}\label{eq:star-exchanges}
\iota_\nu\mskip1mu\omega=0\quad\Longleftrightarrow\quad\iota_\tau(\star\mskip1mu\omega)=0.
\end{equation}
In particular, if $\omega$ is tangential then $\star\mskip1mu\omega$ is a multiple of $\nu^\flat$ along $\dSig$, so its pullback to each boundary curve vanishes identically and
\begin{equation}\label{eq:bdry-integral-zero}
\int_\gamma\star\mskip1mu\omega=0\ \text{ for every boundary component }\gamma\subseteq\dSig.
\end{equation}

We denote by $\HH^1_T(\Sig)$ the space of harmonic one-forms on $\Sig$ that are smooth up to $\dSig$ and tangential along it, and similarly for $\HH^1_T(\Sgb)$ when $\Sgb$ is compact.

Suppose now that $z=x+i\mskip1mu y$ is a local holomorphic coordinate. For a harmonic one-form $\omega$, the \emph{complexification}
\[
\alpha:=\omega+i\star\omega
\]
is a holomorphic one-form. Indeed, writing $\omega=P\dd x+Q\dd y$, the rotation rule \eqref{eq:star} gives $\star\mskip1mu\omega=-Q\dd x+P\dd y$, hence
\begin{equation}\label{eq:alpha-h}
\alpha=(P-i\mskip1mu Q)(\!\dd x+i\dd y)=h(z)\dd z,\qquad h:=P-i\mskip1mu Q,
\end{equation}
and the two harmonicity equations combine into the Cauchy--Riemann equation
$\p_{\bar z}h=0$. Conversely, the real and imaginary parts of a holomorphic
one-form are harmonic. We shall use freely the resulting dictionary
\begin{equation}\label{eq:complexification}
\omega=\Real(\alpha),\qquad \star\mskip1mu\omega=\Imag(\alpha),\qquad\alpha=h(z)\dd z\text{ holomorphic}.
\end{equation}

\subsection{Conformal invariance of the weighted energy}\label{subsec:conf-inv}

The next lemma shows that the weighted energy of a one-form depends only on the conformal class of the metric, provided the weight is regarded as a function and not as a density.

\begin{lemma}\label{lem:conf-inv}
Let $h$ and $\tilde h=e^{2u}h$ be conformally related metrics on $\Sig$, with\linebreak $u\in C^\infty(\Sig)$, and let $\rho:\Sig\to\R_{>0}$ be a function. Then:
\begin{enumerate}[label=\rm(\roman*),itemsep=3pt,topsep=4pt]
\item $\star_{\tilde h}=\star_h$ on one-forms; consequently, a one-form is harmonic for $h$ if and only if it is harmonic for $\tilde h$;
\item the tangentiality condition $\iota_\nu\omega=0$ is the same for both metrics;
\item the weighted energy is invariant:
\[
\int_\Sig |\omega|_h^2\mskip1mu\rho\dd\mu_h=\int_\Sig |\omega|_{\tilde h}^2\mskip1mu\rho\dd\mu_{\tilde h}.
\]
\end{enumerate}
In particular, the space of weighted tangential harmonic one-forms depends only on the conformal class of $h$ and on $\rho$ as a function.
\end{lemma}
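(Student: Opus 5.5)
The plan is a direct pointwise computation in local coordinates, using only the two-dimensional scaling of the relevant quantities under $\tilde h=e^{2u}h$. Since all three assertions are local, I will work near a point, or near a boundary point, with an $h$-orthonormal coframe $\{e^1,e^2\}$ that is positively oriented. Then $\{\tilde e^1,\tilde e^2\}=\{e^ue^1,e^ue^2\}$ is a positively oriented $\tilde h$-orthonormal coframe.

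For (i), I apply the rotation rule \eqref{eq:star} in each coframe. This gives $\star_{\tilde h}\tilde e^1=\tilde e^2$, that is, $\star_{\tilde h}(e^ue^1)=e^ue^2$. By linearity over functions, $\star_{\tilde h}e^1=e^2=\star_he^1$, and likewise for $e^2$, so the two operators agree on one-forms. The notion of harmonicity is then metric independent. Indeed, by \eqref{eq:star-omega-closed}, $\omega$ is harmonic if and only if $d\omega=0$ and $d(\star\omega)=0$, and neither the exterior derivative $d$ nor (by what was just shown) $\star$ on one-forms depends on the choice within the conformal class. The change in $\star$ on two-forms is irrelevant here, because the condition $d(\star\omega)=0$ does not involve it.

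For (ii), the outward unit conormals satisfy $\tilde\nu=e^{-u}\nu$. The reason is that the $h$-orthogonal complement of $T\dSig$ coincides with the $\tilde h$-orthogonal complement, and the outward direction is the same for both metrics. Hence $\iota_{\tilde\nu}\omega=e^{-u}\iota_\nu\omega$, and one vanishes exactly when the other does. Alternatively, (ii) follows from (i) and \eqref{eq:star-exchanges}: tangentiality of $\omega$ is equivalent to the vanishing of the pullback of $\star\omega$ to $\dSig$, and this pullback condition is purely topological.

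For (iii), I write $\omega=a_1e^1+a_2e^2=e^{-u}(a_1\tilde e^1+a_2\tilde e^2)$. From this, $|\omega|_{\tilde h}^2=e^{-2u}|\omega|_h^2$, while the area forms satisfy $\dd\mu_{\tilde h}=e^{2u}\dd\mu_h$. The integrands therefore coincide pointwise, so the two integrals agree even when both are infinite. The weight $\rho$ is unaffected, since it is treated as a function rather than a density.

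The final sentence of the lemma follows by combining (i) through (iii). Each of the three defining conditions of the weighted space, namely harmonicity, tangentiality along $\dSig$, and finiteness of the $\rho$-energy, is invariant under conformal change. I expect no real obstacle. The only point needing care is to keep separate the behaviour of $\star$ on one-forms, which is invariant, from its behaviour on two-forms, which is not, when verifying that $d^\ast\omega=0$ is conformally invariant.
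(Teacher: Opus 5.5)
Your proposal is correct and follows essentially the same route as the paper. The rescaled coframe $\{e^ue^1,e^ue^2\}$ gives $\star_{\tilde h}=\star_h$ on one-forms, and harmonicity reduces to $d\omega=0=d(\star\omega)$; the conormals satisfy $\tilde\nu=e^{-u}\nu$, and the factors $e^{-2u}$ and $e^{2u}$ cancel in the weighted energy. Your alternative argument for (ii) via \eqref{eq:star-exchanges} and your caution about $\star$ on two-forms are sound but not needed.
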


\begin{proof}
(i) If $\{e^1,e^2\}$ is a positively oriented $h$-orthonormal coframe then so is $\{e^ue^1,e^ue^2\}$ for $\tilde h$. Applying the rotation rule \eqref{eq:star} in either coframe to
\[
\omega=\omega_1e^1+\omega_2e^2=(e^{-u}\omega_1)(e^ue^1)+(e^{-u}\omega_2)(e^ue^2)
\]
yields the same form $\omega_1e^2-\omega_2e^1$. Since $d$ is metric independent and $d^\ast\omega=0$ is equivalent to $d(\star\mskip1mu\omega)=0$, both harmonicity equations are unchanged. (ii) The conormals are related by $\tilde\nu=e^{-u}\nu$. (iii) In dimension two, $|\omega|_{\tilde h}^2=e^{-2u}|\omega|_h^2$ and $\dd\mu_{\tilde h}=e^{2u}\dd\mu_h$, and the conformal factors cancel.
\end{proof}

\begin{remark}\label{rem:middle-degree}
Item (iii) holds for one-forms on surfaces and, more generally, for forms of middle degree. In particular, when computing the weighted energy near a puncture, we may replace the given metric by the flat metric $|\!\dd z|^2$ of the coordinate.
\end{remark}

\subsection{The unweighted dimension}\label{subsec:unweighted}

We record the classical count on the compactification, which will reappear in \S\ref{subsec:dim-formula} as the computation of a kernel: 
\begin{equation}\label{eq:dim-unweighted}
\dim_\R\HH^1_T(\Sgb)=2g+k-1.
\end{equation}
By the Hodge--Morrey--Friedrichs decomposition, $\HH^1_T(\Sgb)$ represents the absolute de Rham cohomology $H^1(\Sgb;\R)$ \cite[Ch.~2]{Schwarz}, whose dimension is the first Betti number $2g+k-1$ of a compact orientable surface of genus $g$ with $k$ boundary components; see \cite{ACS2,Sargent} for this count in the present geometric context.

\subsection{Weighted spaces, admissible weights and local orders}\label{subsec:admissible}

\begin{definition}\label{def:weighted-space}
Let $\rho:\Sig\to\R_{>0}$ be a continuous function. The space of \emph{weighted tangential harmonic one-forms} is
\[
\Lrho(\Sig):=\Bigl\{\omega\in\HH^1_T(\Sig):\ \|\omega\|_\rho<+\infty\Bigr\},\qquad\|\omega\|_\rho^2:=\int_\Sig|\omega|^2\rho\dd\mu.
\]
By Lemma~\ref{lem:conf-inv}, this space depends only on the conformal structure of $\Sig$ and on $\rho$ as a function, and not on the auxiliary metric $h$, which we therefore suppress from the notation.
\end{definition}

We next isolate the class of weights for which the dimension can be computed, and
the integer that such a weight attaches to each puncture.

\begin{definition}\label{def:admissible}
The weight $\rho$ is \emph{admissible} if:
\begin{enumerate}[label=\rm(\alph*),itemsep=3pt,topsep=4pt]
\item $\rho$ is bounded on $\Sig$ and bounded away from $0$ on $\Sgb\setminus\bigcup_jD_j$;
\item near each puncture, $\rho$ is comparable to a radial profile: there exist constants $0<c\le C$ and positive measurable functions $\varrho_j$ on $(0,R_1]$ with
\[
c\mskip2mu\varrho_j(|z|)\ \le\ \rho\mskip1mu(z)\ \le\ C\mskip1mu\varrho_j(|z|)\ \text{ on }\ \{0<|z|\le R_1\}\subset D_j^\ast;
\]
\item for every $j$, the \emph{local order}
\begin{equation}\label{eq:Nj-def}
N_j:=\max\Bigl\{m\in\Z_{\ge0} : \textstyle\int_0^{R_1}r^{-2m}\,\varrho_j(r)\,r\dd r<+\infty\Bigr\}
\end{equation}
is finite.
\end{enumerate}
\end{definition}

The integrand in \eqref{eq:Nj-def} is exactly the weighted energy density of the model form $z^{-m}\dd z$ in polar coordinates. Indeed, by \eqref{eq:complexification} and Remark~\ref{rem:middle-degree}, we may compute in the flat coordinate metric, in which
\[
|z^{-m}\dd z|^2\rho\dd\mu=|z|^{-2m}\rho\dd x\dd y\ \asymp\ r^{-2m}\varrho_j(r)\, r\dd r\dd\theta.
\]
Thus $N_j$ is the largest $m$ such that $z^{-m}\dd z$ has finite weighted energy near~$p_j$.

\begin{lemma}\label{lem:Nj-well-def}
The local order is well defined:
\begin{enumerate}[label=\rm(\roman*),itemsep=3pt,topsep=4pt]
\item the set of $m\in\Z_{\ge0}$ for which the integral in \eqref{eq:Nj-def} converges is exactly $\{0,1,\dots,N_j\}$;
\item $N_j$ does not depend on the holomorphic coordinate chosen at $p_j$.
\end{enumerate}
\end{lemma}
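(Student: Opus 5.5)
For (i), I will use that the integrand is monotone in $m$. For $0<r\le R_1$ we have $r^{-2m}\ge R_1^{-2(m-m')}r^{-2m'}$ whenever $m\ge m'$. Hence convergence of $\int_0^{R_1}r^{-2m}\varrho_j(r)\,r\dd r$ implies convergence for every smaller $m'\ge0$. So the set of admissible $m$ is an initial segment of $\Z_{\ge0}$, and by admissibility (c) it has a finite maximum $N_j$. It remains to see that the set is nonempty, that is, that $m=0$ is allowed. Condition (a) gives $\rho\le\sup\rho<\infty$, and (b) gives $\varrho_j\le c^{-1}\rho$. Therefore $\int_0^{R_1}\varrho_j(r)\,r\dd r\le c^{-1}\sup\rho\cdot R_1^2/2<\infty$. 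This proves the set is exactly $\{0,\dots,N_j\}$.

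For (ii), I will first rephrase $N_j$ invariantly, as the remark after Definition~\ref{def:admissible} suggests. By (b) and Remark~\ref{rem:middle-degree}, $m\le N_j$ holds if and only if the holomorphic form $z^{-m}\dd z$ has finite weighted energy $\int_{\{0<|z|\le R_1\}}|z^{-m}\dd z|^2\rho\dd\mu$. By Lemma~\ref{lem:conf-inv}(iii), this energy is a conformally invariant quantity attached to the one-form, computable in any conformal metric. Finiteness near $p_j$ also does not depend on the radius of the neighborhood: on an annulus $\{\delta\le|z|\le R_1\}$ the form and $\rho$ are bounded, since $\rho$ is bounded by (a).

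Now let $w$ be another holomorphic coordinate at $p_j$, with radial profile $\tilde\varrho_j$ and order $\tilde N_j$. Write $w=\phi(z)=az(1+O(z))$ with $a\ne0$, so that $|w|\asymp|z|$ and $\dd w=\phi'(z)\dd z$ with $\phi'$ holomorphic and nonvanishing near $0$. Then $w^{-m}\dd w=z^{-m}u(z)\dd z$, where $u$ is holomorphic with $u(0)=a^{1-m}\ne0$. Pointwise, $|w^{-m}\dd w|\asymp|z^{-m}\dd z|$ on a small punctured disk, so the two forms have simultaneously finite or infinite weighted energy near $p_j$. This energy, computed in the $w$-chart, is comparable to $\int r^{-2m}\tilde\varrho_j(r)\,r\dd r$. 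Computed in the $z$-chart, $z^{-m}\dd z$ gives $\int r^{-2m}\varrho_j(r)\,r\dd r$. Hence the admissible sets of $m$ coincide, and $\tilde N_j=N_j$.

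The main subtlety is making sure the comparison in (b) is used only on a common small punctured neighborhood where both charts are defined. One must also check that the profile $\varrho_j$ is determined only up to comparability by $\rho$, which is exactly what makes the integrals comparable. Once I restrict to a disk $\{|z|<\delta\}$ contained in both charts and use the boundedness of $\rho$ away from the puncture, this step is routine.
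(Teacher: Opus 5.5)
Your proof is correct and follows the same route as the paper. For (i), both use a monotonicity comparison in $m$ to make the convergent exponents an initial segment of $\Z_{\ge0}$. Your bound $r^{-2m}\ge R_1^{-2(m-m')}r^{-2m'}$ replaces the paper's shrinking to $R_1<1$, and your check via (a) and (b) that $m=0$ is admissible makes explicit what the paper leaves implicit. For (ii), both rest on the pointwise comparison $|w^{-m}\dd w|\asymp|z^{-m}\dd z|$, which comes from $|\varphi(z)|\asymp|z|$ and $|\varphi'(z)|\asymp1$; your use of conformal invariance and localization to a common punctured disk just spells out the paper's one-line argument.
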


\begin{proof}
(i) Shrinking $R_1$ we may assume $R_1<1$; then convergence for $m$ implies convergence for every $m'\le m$ by comparison, so the set of convergent exponents is an initial segment of $\Z_{\ge0}$, and (c) makes it $\{0,1,\dots,N_j\}$. (ii) If $w=\varphi(z)$ is another holomorphic coordinate centered at $p_j$, then $|\varphi(z)|\asymp|z|$ and $|\varphi'(z)|\asymp1$ near $z=0$, so $|w^{-m}\dd w|^2\asymp|z^{-m}\dd z|^2$ and the convergence in \eqref{eq:Nj-def} is unaffected.
\end{proof}

\begin{remark}[Examples]\label{rem:examples}
(1) The trivial weight $\rho\equiv1$ is admissible with $N_j=0$ at every puncture, the integral $\int_0^{R_1}r^{1-2m}\dd r$ converging only for $m=0$. (2) The power profile $\varrho_j(r)=r^{2\kappa}$, $\kappa\in\Z_{\ge0}$, gives $N_j=\kappa$. (3) The borderline profile $\varrho_j(r)=r^{2\kappa}\bigl(\log(1/r)\bigr)^{-2}$ gives $N_j=\kappa+1$ (Remark~\ref{rem:log-borderline}); this is the case of the weight \eqref{eq:rho-def}, see Lemma~\ref{lem:N-computation}.
\end{remark}

\subsection{Riemann--Roch on the Schottky double}\label{subsec:double}

\begin{remark}\label{rem:double}
The formula of Theorem~\ref{thm:dim-formula} can also be obtained from the Riemann--Roch theorem. Let $\Sdouble$ be the Schottky double of $\Sgb$, that is, the closed Riemann surface of genus $2g+k-1$ obtained by gluing $\Sgb$ and a copy of it with the opposite orientation along $\dSgb$, and let $\sigma$ be the corresponding antiholomorphic involution of $\Sdouble$ \cite{SchifferSpencer}. By the reflection principle, harmonic one-forms on $\Sgb$ which are tangential along $\dSgb$ correspond to holomorphic one-forms $\alpha$ on $\Sdouble$ with $\sigma^\ast\alpha=\overline\alpha$, and a pole of order $N_j$ at $p_j$ corresponds to poles of order $N_j$ at $p_j$ and at $\sigma(p_j)$. Applying the Riemann--Roch theorem to the meromorphic one-forms on $\Sdouble$ with poles bounded by the divisor $\sum_jN_j\bigl(p_j+\sigma(p_j)\bigr)$, and taking the real subspace given by the condition $\sigma^\ast\alpha=\overline\alpha$, one obtains exactly $2g+k-1+2\sum_jN_j-\varepsilon$. Moreover, since $\sigma$ is antiholomorphic, the residues of $\alpha$ at $p_j$ and at $\sigma(p_j)$ are complex conjugate, and the residue theorem on $\Sdouble$ gives $\sum_j\Real(c_{-1,j})=0$, which is the relation of Lemma~\ref{lem:residue-necessity}. We do not use the double in the proof below.
\end{remark}

\section{The dimension of the weighted space}\label{sec:weighted}

We keep the setting of \S\ref{subsec:setting} and fix an admissible weight $\rho$ with local orders $N_1,\dots,N_r$. In this section we prove Theorem~\ref{thm:dim-formula}.

\subsection{Local analysis: the Laurent expansion}\label{subsec:laurent}

Fix a puncture $p_j$ and a holomorphic coordinate $z$ as in \S\ref{subsec:setting}. If $\omega\in\Lrho(\Sig)$, then by \S\ref{subsec:harmonic} the complexification $\alpha=\omega+i\star\omega$ is holomorphic on the punctured disk $D_j^\ast$, so it has a Laurent expansion
\begin{equation}\label{eq:laurent-series}
\alpha=h(z)\dd z,\qquad h(z)=\sum_{n=-\infty}^{+\infty}c_nz^n,
\end{equation}
convergent on $0<|z|<R_1$, \emph{a priori} with an essential singularity at $z=0$. The next lemma shows that weighted integrability truncates the principal part.

\begin{lemma}\label{lem:laurent}
Let $\rho$ be admissible and let $\omega\in\Lrho(\Sig)$. Then, at each puncture~$p_j$,
\[
c_n=0\ \text{ for every }\ n<-N_j.
\]
Consequently, $\alpha$ extends to a meromorphic one-form on all of $\Sgb$, with a pole of order at most $N_j$ at $p_j$ and no other singularity.
\end{lemma}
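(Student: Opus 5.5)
The plan is to compute the weighted energy of $\alpha$ on the punctured disk $\{0<|z|\le R_1\}$ in polar coordinates, using Parseval's identity on each circle $|z|=r$ to decouple the Laurent coefficients. By Remark~\ref{rem:middle-degree} we may compute in the flat coordinate metric $|\!\dd z|^2$. There, \eqref{eq:alpha-h} gives $|\omega|^2=P^2+Q^2=|h|^2$, because $\alpha=(P-iQ)\dd z$. Admissibility (b) then yields
\[
+\infty>\int_{D_j^\ast}|\omega|^2\rho\dd\mu\ \ge\ c\int_0^{R_1}\Bigl(\int_0^{2\pi}|h(re^{i\theta})|^2\dd\theta\Bigr)\varrho_j(r)\,r\dd r .
\]
The whole punctured disk lies in $\Sig$ and $\rho$ is positive there, so the left side is bounded by the global weighted energy $\|\omega\|_\rho^2$.

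Next, since the Laurent series converges uniformly on each circle $|z|=r$ with $0<r<R_1$, Parseval's identity gives
\[
\int_0^{2\pi}|h(re^{i\theta})|^2\dd\theta=2\pi\sum_n|c_n|^2r^{2n}\ \ge\ 2\pi|c_n|^2r^{2n}\quad\text{for each fixed }n.
\]
Inserting this into the bound above gives $|c_n|^2\int_0^{R_1}r^{2n}\varrho_j(r)\,r\dd r<\infty$ for every $n$. If $n<-N_j$, write $m=-n>N_j$. By Lemma~\ref{lem:Nj-well-def}(i), the integral $\int_0^{R_1}r^{-2m}\varrho_j(r)\,r\dd r$ diverges, which forces $c_n=0$.

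It remains to conclude. The Laurent series now has only finitely many negative terms, so $h$ is meromorphic at $0$ with a pole of order at most $N_j$. Consequently $\alpha=h\dd z$ is a meromorphic one-form near $p_j$. Away from the punctures $\alpha$ is holomorphic, and the definition of $\omega+i\star\omega$ is coordinate independent by \eqref{eq:complexification}, so $\alpha$ is a meromorphic one-form on $\Sgb\setminus\dSgb$, smooth up to the boundary. The only point needing care is the interchange of the $\theta$-integral with the series. This is harmless: on a fixed circle we are working with an $L^2$ Fourier series, and the lower bound uses a single coefficient, so Fubini/Tonelli for nonnegative integrands suffices. I expect no real obstacle. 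The key input is simply that divergence of the model integral holds for every $m>N_j$, which is exactly Lemma~\ref{lem:Nj-well-def}(i).
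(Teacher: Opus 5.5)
Your proof is correct and follows essentially the same route as the paper. You reduce to the flat coordinate metric via conformal invariance, use $|\omega|^2=|h|^2$, and compare $\rho$ with the radial profile $\varrho_j$. You then apply Parseval on each circle together with Tonelli, and conclude $c_n=0$ for $n<-N_j$ from the divergence of the model integral given by Lemma~\ref{lem:Nj-well-def}(i). The one difference is minor: you use only the lower bound $c\,\varrho_j\le\rho$, which is all the argument needs, whereas the paper states a two-sided equivalence.
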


\begin{proof}
By Lemma~\ref{lem:conf-inv}(iii), we may compute the weighted energy on $D_j^\ast$ in the flat metric $|\!\dd z|^2$ of the coordinate. In that metric, writing $h=P-i\mskip1mu Q$ as in \eqref{eq:alpha-h}, one has $\omega=P\dd x+Q\dd y$ and therefore
\[
|\omega|^2\dd\mu=\bigl(P^2+Q^2\bigr)\dd x\dd y=|h(z)|^2\dd x\dd y.
\]
By Definition~\ref{def:admissible}(b), we may moreover replace $\rho$ by the radial profile $\varrho_j$, at the cost of the universal constants $c$ and $C$. Finiteness of $\|\omega\|_\rho$ near $p_j$ is thus equivalent to
\begin{equation}\label{eq:finite-energy-disk}
\int_{\{0<|z|<R_1\}}|h(z)|^2\mskip1mu\varrho_j(|z|)\dd x\dd y<+\infty.
\end{equation}
In polar coordinates, $\dd x\dd y=r\dd r\dd\theta$; on each circle $|z|=r$, the Laurent series \eqref{eq:laurent-series} converges uniformly and is the Fourier series of $\theta\mapsto h(re^{i\theta})$, so Parseval's identity and Tonelli's theorem turn \eqref{eq:finite-energy-disk} into
\[
2\pi\sum_{n\in\Z}|c_n|^2\int_0^{R_1}r^{2n+1}\mskip1mu\varrho_j(r)\dd r<+\infty.
\]
Every summand being nonnegative, for each $n$ either the radial integral converges or $c_n=0$. For $n=-m$ with $m\ge1$, the radial integral is $\int_0^{R_1}r^{-2m}\varrho_j(r)\,r\dd r$, exactly the integral in the definition \eqref{eq:Nj-def} of $N_j$, which converges precisely when $m\le N_j$ by Lemma~\ref{lem:Nj-well-def}(i). Therefore $c_n=0$ for every $n<-N_j$, and $z=0$ is a pole of $h$ of order at most $N_j$. The puncture being arbitrary, $\alpha$ extends meromorphically across every $p_j$ with the stated bounds; it is holomorphic up to $\dSgb$, where $\omega$ is a smooth harmonic form.
\end{proof}

For $\rho\equiv1$ all $N_j$ vanish (Remark~\ref{rem:examples}(1)), and Lemma~\ref{lem:laurent} says that every square integrable harmonic one-form extends smoothly across the punctures, which is a classical fact.

\begin{remark}[The logarithmic borderline]\label{rem:log-borderline}
We verify Remark~\ref{rem:examples}(3), which is the case used in Section~\ref{sec:fbms}. For $\varrho_j(r)=r^{2\kappa}(\log(1/r))^{-2}$, $\kappa\in\Z_{\ge0}$, the integral in \eqref{eq:Nj-def} is $\int_0^{R_1}r^{2\kappa-2m+1}(\log(1/r))^{-2}\dd r$, with $R_1<1$. For $m\le\kappa$ the exponent of $r$ is greater than $-1$ and the integral converges. For $m=\kappa+1$ the exponent is $-1$ and, with $t=\log(1/r)$, the integral becomes $\int^{+\infty}t^{-2}\dd t<+\infty$. For $m\ge\kappa+2$ the integrand is bounded below by $r^{-3}(\log(1/r))^{-2}$ near $r=0$, and the integral diverges. Hence $N_j=\kappa+1$.
\end{remark}

\subsection{Counting the local degrees of freedom}\label{subsec:local-count}

By Lemma~\ref{lem:laurent}, an element of $\Lrho(\Sig)$ may carry at $p_j$ a singular part
\begin{equation}\label{eq:hsing}
h_{\mathrm{sing}}(z)=\frac{c_{-1}}{z}+\frac{c_{-2}}{z^2}+\cdots+\frac{c_{-N_j}}{z^{N_j}}.
\end{equation}
We now count how many real parameters this represents. If $N_j=0$, the singular part is empty and the count is $0$, so we assume $N_j\ge1$.

\begin{lemma}\label{lem:local-count}
The space of possible singular parts \eqref{eq:hsing} at $p_j$ has real dimension $2N_j$. More precisely, the real one-forms $\Real(z^{-\ell}\dd z)$ and $\Imag(z^{-\ell}\dd z)$, for $1\le\ell\le N_j$, are linearly independent over $\R$.
\end{lemma}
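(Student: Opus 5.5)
The plan is to use the correspondence between singular parts and complex coefficients, and then to reduce real linear independence of the real one-forms to a statement about holomorphic forms. The singular parts \eqref{eq:hsing} are parametrized by $(c_{-1},\dots,c_{-N_j})\in\C^{N_j}\cong\R^{2N_j}$. The map sending this tuple to the real one-form $\Real(h_{\mathrm{sing}}\dd z)$ on $D_j^\ast$ is $\R$-linear. By the dictionary \eqref{eq:complexification}, the real form $\omega=\Real(\alpha)$ determines its complexification $\alpha=\omega+i\star\omega$, since $\star$ is determined by the conformal structure. So it suffices to show that this $\R$-linear map is injective. Equivalently, it suffices to show that the $2N_j$ forms $\Real(z^{-\ell}\dd z)$ and $\Imag(z^{-\ell}\dd z)=\Real(-i z^{-\ell}\dd z)$, for $1\le\ell\le N_j$, are $\R$-linearly independent.

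To prove this, suppose $\sum_\ell\bigl(a_\ell\Real(z^{-\ell}\dd z)+b_\ell\Imag(z^{-\ell}\dd z)\bigr)=0$ on $D_j^\ast$ with $a_\ell,b_\ell\in\R$. Put $c_\ell:=a_\ell-i b_\ell$. Then the left side equals $\Real(\beta)$ with $\beta=\sum_\ell c_\ell z^{-\ell}\dd z$, because $\Real(c z^{-\ell}\dd z)=a\Real(z^{-\ell}\dd z)+b\Imag(z^{-\ell}\dd z)$.

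Next, $\beta$ is holomorphic, so $\star\Real(\beta)=\Imag(\beta)$ by \eqref{eq:complexification}. Hence $\Real(\beta)=0$ forces $\Imag(\beta)=\star 0=0$, and therefore $\beta=0$. Writing $\beta=f(z)\dd z$ with $f(z)=\sum_\ell c_\ell z^{-\ell}$, we get $f\equiv0$ on the punctured disk. Uniqueness of Laurent coefficients then gives $c_\ell=0$. Equivalently, the coefficients can be read off from $\oint_{|z|=r} z^{\ell-1}f(z)\dd z=2\pi i\,c_\ell$. Thus $a_\ell=b_\ell=0$ for every $\ell$.

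If one wants to avoid the Hodge star, there is a direct alternative. Write $\Real(c z^{-\ell}\dd z)=P\dd x+Q\dd y$ with $P-iQ=cz^{-\ell}$, as in \eqref{eq:alpha-h}. Then the vanishing of the real form gives $P=Q=0$, and hence $f=P-iQ\equiv0$ directly.

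No step is a real obstacle. The only point needing care is the sign convention relating $(a_\ell,b_\ell)$ to $c_\ell$, together with the observation that a holomorphic form is determined by its real part. The count $2N_j$ then follows.
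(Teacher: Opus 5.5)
Your proof is correct and is essentially the paper's argument. The paper also reduces the claim to injectivity of the $\R$-linear map $h_{\mathrm{sing}}\mapsto\Real(h_{\mathrm{sing}}(z)\dd z)$, observes via \eqref{eq:alpha-h} that $\Real(h\dd z)=P\dd x+Q\dd y$ vanishes only if $h=P-iQ$ does, and concludes by uniqueness of Laurent coefficients; this is your ``direct alternative''. The only difference is a sign convention: the paper uses $c_{-\ell}=a_{-\ell}+ib_{-\ell}$ with the combination $a\Real-b\Imag$, while you use $c_\ell=a_\ell-ib_\ell$ with $a\Real+b\Imag$, which is equivalent.
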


\begin{proof}
Each coefficient decomposes as $c_{-\ell}=a_{-\ell}+i\mskip1mu b_{-\ell}$ with $a_{-\ell},b_{-\ell}\in\R$, giving \emph{a priori} $2N_j$ real parameters, and the assignment taking $h_{\mathrm{sing}}$ to the real one-form $\Real\bigl(h_{\mathrm{sing}}(z)\dd z\bigr)$ is $\R$-linear; the content of the lemma is its injectivity. In the notation \eqref{eq:alpha-h}, the real part of $h\dd z$ is the one-form $P\dd x+Q\dd y$ with $h=P-i\mskip1mu Q$, so it vanishes identically if and only if $h$ does; and a Laurent polynomial vanishes if and only if all of its coefficients do. Therefore, since
\begin{equation}\label{eq:coefficients_ab}
a_{-\ell}\Real(z^{-\ell}\dd z)-b_{-\ell}\Imag(z^{-\ell}\dd z)=\Real\bigl((a_{-\ell}+i\mskip1mu b_{-\ell})z^{-\ell}\dd z\bigr),
\end{equation}
because $\Imag(\alpha)=\Real(-i\mskip1mu\alpha)$, the one-forms $\Real(z^{-\ell}\dd z)$ and $\Imag(z^{-\ell}\dd z)$ are linearly independent over $\R$ for $1\le\ell\le N_j$, and the space of singular parts has real dimension $2N_j$.
\end{proof}

For later use, we record the resulting model forms in polar coordinates. From $\dd z=e^{i\theta}(\!\dd r+i\mskip1mu r\dd\theta)$, one computes
\begin{equation}\label{eq:model-forms}
\begin{aligned}
\omega_{-\ell}^{(a)}&:=\Real\bigl(z^{-\ell}\dd z\bigr)=\frac{1}{r^{\ell}}\cos((1-\ell)\theta)\dd r-\frac{1}{r^{\ell-1}}\sin((1-\ell)\theta)\dd\theta,\\[4pt]
\omega_{-\ell}^{(b)}&:=-\Imag\bigl(z^{-\ell}\dd z\bigr)=-\frac{1}{r^{\ell}}\sin((1-\ell)\theta)\dd r-\frac{1}{r^{\ell-1}}\cos((1-\ell)\theta)\dd\theta,
\end{aligned}
\end{equation}
so that the singular part of $\omega=\Real(\alpha)$ at $p_j$ reads $\sum_{\ell=1}^{N_j}\bigl(a_{-\ell}\mskip2mu\omega_{-\ell}^{(a)}+b_{-\ell}\mskip2mu\omega_{-\ell}^{(b)}\bigr)$, by identity \eqref{eq:coefficients_ab}.

\subsection{The global residue constraint is necessary}\label{subsec:necessity}

So far, we have fixed one puncture at a time. From now on, the statements are global, relating the data at all punctures simultaneously, so we write $c_{-\ell,j}$, and correspondingly $a_{-\ell,j}$ and $b_{-\ell,j}$, for the coefficients of the expansion at $p_j$. Recall that $c_{-1,j}$ is the \emph{residue} of $\alpha$ at $p_j$. Note that the variable $a_{-1,j}$ exists only when $N_j\ge1$; a puncture with $N_j=0$ contributes no residue variable at all.

\begin{lemma}\label{lem:residue-necessity}
Let $\rho$ be admissible and let $\omega\in\Lrho(\Sig)$, with complexification $\alpha=\omega+i\star\omega$. Then the real parts of the residues satisfy
\begin{equation}\label{eq:residue-relation}
\sum_{j\,:\,N_j\ge1}a_{-1,j}=0,\ \text{ where }\ a_{-1,j}:=\Real(c_{-1,j}).
\end{equation}
\end{lemma}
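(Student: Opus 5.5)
The relation \eqref{eq:residue-relation} will follow from Stokes' theorem applied to the closed one-form $\star\omega$ on a truncated surface. By \eqref{eq:star-omega-closed}, harmonicity of $\omega$ gives $d(\star\omega)=0$ on $\Sig$. For small $0<s<R_1$ set $\Sig_s:=\Sgb\setminus\bigcup_j\{|z|<s\}$. This is a compact surface with boundary, and
\[
\p\Sig_s=\dSig\cup\bigcup_j\{|z|=s\},
\]
where each small circle carries the orientation opposite to the counterclockwise one in the coordinate $z$. Stokes' theorem then gives
\[
0=\int_{\Sig_s}d(\star\omega)=\int_{\dSig}\star\omega-\sum_{j=1}^r\oint_{|z|=s}\star\omega .
\]
Since $\omega$ is tangential, \eqref{eq:bdry-integral-zero} makes the integral over each boundary component $\gamma\subseteq\dSig$ vanish. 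Hence $\sum_j\oint_{|z|=s}\star\omega=0$.

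Next we compute each circle integral from the Laurent expansion. By \eqref{eq:complexification} we have $\star\omega=\Imag(\alpha)$ on $D_j^\ast$. By Lemma~\ref{lem:laurent}, $\alpha=h(z)\dd z$ with $h$ meromorphic at $0$, so the residue theorem gives
\[
\oint_{|z|=s}\alpha=2\pi i\,c_{-1,j}.
\]
Taking imaginary parts, which commutes with real line integration, yields
\[
\oint_{|z|=s}\star\omega=\Imag(2\pi i\,c_{-1,j})=2\pi\,\Real(c_{-1,j})=2\pi a_{-1,j}.
\]
If $N_j=0$, Lemma~\ref{lem:laurent} says $h$ is holomorphic at $p_j$, so the integral is $0$. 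These punctures drop out, which is consistent with their having no residue variable. Combining this with the previous step gives $2\pi\sum_{j:N_j\ge1}a_{-1,j}=0$, which is \eqref{eq:residue-relation}.

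The only delicate point is orientation bookkeeping: fixing the sign of each small circle as part of $\p\Sig_s$, and checking that $\star$ together with the convention $\{\nu,\tau\}$ from \S\ref{subsec:harmonic} agrees with $\star\omega=\Imag(\alpha)$ in a positively oriented holomorphic coordinate. This does not affect the conclusion, though. All circles carry the same orientation relative to their coordinates, and the boundary term vanishes identically, so any common overall sign is harmless. I would also note that the argument needs $\omega$ smooth up to $\dSig$, which holds by the definition of $\HH^1_T(\Sig)$.
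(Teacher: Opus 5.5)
Your proposal is correct and follows the paper's proof essentially step for step. Both apply Stokes' theorem to the closed form $\star\omega$ on the surface truncated by small coordinate disks around the punctures, and both kill the outer boundary term using tangentiality via \eqref{eq:bdry-integral-zero}. Both then evaluate each circle integral as $\Imag(2\pi i\,c_{-1,j})=2\pi a_{-1,j}$ by the residue theorem.
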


\begin{proof}
Choose a radius $0<r_0<R_1$ and let
\[
\Sig_{r_0}:=\Sgb\setminus\bigcup_{j=1}^rD_j(r_0),\qquad D_j(r_0):=\{|z|<r_0\}\subset D_j,
\]
be the compact surface with boundary obtained by removing the open coordinate disks of radius $r_0$ around the punctures, so that
\[
\p\Sig_{r_0}=\dSgb\cup C_1\cup\cdots\cup C_r\ \text{ with }\ C_j:=\p D_j(r_0).
\]
We denote by $C_j^+$ the circle $C_j$ with its counterclockwise parametrization $z=r_0e^{i\theta}$, which is opposite to the orientation induced from $\Sig_{r_0}$, and write Stokes' theorem accordingly.

The form $\star\mskip1mu\omega$ is smooth on $\Sig_{r_0}$ and closed there by \eqref{eq:star-omega-closed}, so Stokes' theorem gives
\begin{equation}\label{eq:stokes}
0=\int_{\Sig_{r_0}}d(\star\mskip1mu\omega)=\int_{\dSgb}\star\mskip1mu\omega-\sum_{j=1}^r\int_{C_j^+}\star\mskip1mu\omega,
\end{equation}
the minus sign accounting for the induced orientation of the inner circles. The outer integral vanishes: $\omega$ is tangential, so $\star\mskip1mu\omega$ is normal along $\dSgb$ by \eqref{eq:star-exchanges}, its pullback to each boundary curve is the zero one-form, and \eqref{eq:bdry-integral-zero} applies. Hence
\begin{equation}\label{eq:stokes-sum}
\sum_{j=1}^r\int_{C_j^+}\star\mskip1mu\omega=0.
\end{equation}

It remains to evaluate each circle integral. The Laurent series \eqref{eq:laurent-series} converges uniformly on $C_j^+$, so term-by-term integration and the residue theorem give
\[
\int_{C_j^+}\alpha=2\pi i\mskip1mu c_{-1,j}.
\]
Since $\star\omega=\Imag(\alpha)$ by \eqref{eq:complexification}, and since taking imaginary parts commutes with pulling back and integrating,
\begin{equation}\label{eq:period}
\int_{C_j^+}\star\mskip1mu\omega=\Imag\bigg(\int_{C_j^+}\alpha\bigg)=\Imag\bigl(2\pi i\mskip1mu c_{-1,j}\bigr)=2\pi\Real(c_{-1,j})=2\pi\mskip1mu a_{-1,j}.
\end{equation}
If $N_j=0$, there is no residue term and the integral vanishes, in accordance with the convention that the variable $a_{-1,j}$ is then absent. Substituting \eqref{eq:period} into \eqref{eq:stokes-sum} gives \eqref{eq:residue-relation}.
\end{proof}

\begin{remark}[Asymmetry of the constraint]\label{rem:asymmetry}
The relation \eqref{eq:residue-relation} involves only the real parts of the residues. This comes from the boundary condition: we apply Stokes' theorem to $\star\mskip1mu\omega$, which is closed by \eqref{eq:star-omega-closed}, and the tangential condition makes $\int_{\dSgb}\star\mskip1mu\omega$ vanish. If $\omega$ were normal along $\dSgb$, we would apply Stokes' theorem to $\omega$ instead, and since
\[
\int_{C_j^+}\omega=\Real\bigl(2\pi i\mskip1mu c_{-1,j}\bigr)=-2\pi b_{-1,j},
\]
we would obtain $\sum_j b_{-1,j}=0$. See also Remark~\ref{rem:double}.
\end{remark}

\subsection{The global residue constraint is sufficient}\label{subsec:sufficiency}

We now prove that relation \eqref{eq:residue-relation} is the only obstruction to prescribing the principal parts. By Lemma~\ref{lem:residue-necessity}, the hypothesis $\sum_ja_{-1,j}=0$ below cannot be removed.

\begin{lemma}\label{lem:sufficiency}
Let $\rho$ be admissible and let real numbers 
\[
a_{-\ell,j},\, b_{-\ell,j},\ \text{ with }\ 1\le\ell\le N_j \ \text{ and }\ 1\le j\le r,
\]
be given, subject to the residue relation 
\[
\sum_{j:N_j\ge1}a_{-1,j}=0.
\]
Then there exists $\omega\in\Lrho(\Sig)$ whose complexification $\alpha=\omega+i\star\omega$ has Laurent principal part 
\[
\sum_{\ell=1}^{N_j}\bigl(a_{-\ell,j}+i\mskip1mu b_{-\ell,j}\bigr)z^{-\ell}
\]
at every puncture $p_j$, and no other singularity.
\end{lemma}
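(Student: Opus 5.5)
The plan is to build $\omega$ in two stages: first a smooth "model" form on $\Sig$ carrying the prescribed singular parts, then a correction by a smooth form obtained from a Dirichlet problem and a Neumann problem on $\Sgb$.

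\textbf{Step 1: the model form.} Fix a cut-off $\chi_j$ supported in $D_j$ and equal to $1$ on $\{|z|\le r_0\}$. Set
\[
\omega_0:=\sum_j\chi_j\sum_{\ell=1}^{N_j}\bigl(a_{-\ell,j}\,\omega_{-\ell}^{(a)}+b_{-\ell,j}\,\omega_{-\ell}^{(b)}\bigr),
\]
using the model forms \eqref{eq:model-forms}. Near each $p_j$ the form $\omega_0$ is harmonic, being the real part of the holomorphic form $h_{\mathrm{sing}}\dd z$. It vanishes near $\dSgb$, and its failure to be harmonic is concentrated in the compact annuli where $d\chi_j\ne0$. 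Thus $f:=d\omega_0$ and $g:=d^\ast\omega_0$ are smooth and compactly supported in $\Sig$.

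\textbf{Step 2: removing $d\omega_0$.} We seek $\omega=\omega_0-du-\star dv$, with $u,v$ smooth on $\Sgb$, and two requirements.
\begin{itemize}
\item[(i)] Harmonicity. Since $d\star d v=\Delta v\dd\mu$ up to sign, and $d^\ast du=\pm\Delta u$, $d^\ast\star dv=0$, we need $\Delta v=\pm\star f$ and $\Delta u=\pm g$.
\item[(ii)] Tangentiality. We need $\iota_\nu(du+\star dv)=0$ on $\dSgb$. By \eqref{eq:star-frame}, $\iota_\nu\star dv=\pm\partial_\tau v$. So it suffices to take $v=0$ on $\dSgb$, which is a Dirichlet problem, and $\partial_\nu u=0$, which is a Neumann problem.
\end{itemize}
The Dirichlet problem $\Delta v=\pm\star f$, $v|_{\dSgb}=0$, is always solvable on the compact surface $\Sgb$, and $v$ is smooth up to $\dSgb$.

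\textbf{Step 3: the Neumann problem, where the residue relation enters.} The Neumann problem $\Delta u=\pm g$, $\partial_\nu u=0$, is solvable if and only if $\int_{\Sgb}g\dd\mu=0$. This compatibility condition is the main obstacle, and it is where the hypothesis $\sum_ja_{-1,j}=0$ is used. I compute $\int g\dd\mu=\pm\int_{\Sgb}d(\star\omega_0)$, noting that $d\star\omega_0$ is compactly supported and smooth. Integrate over $\Sig_{r_0}$ exactly as in the proof of Lemma~\ref{lem:residue-necessity}:
\begin{itemize}
\item the term on $\dSgb$ vanishes because $\omega_0=0$ there;
\item on each inner circle $C_j^+$ one has $\chi_j=1$, so the contribution is $\int_{C_j^+}\Imag(h_{\mathrm{sing}}\dd z)=2\pi a_{-1,j}$ by \eqref{eq:period};
\item the disks $D_j(r_0)$ contribute nothing, since $\omega_0$ is harmonic there.
\end{itemize}
Hence $\int g\dd\mu=\pm2\pi\sum_ja_{-1,j}=0$, and $u$ exists, smooth on $\Sgb$.

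Care is needed with signs and orientation conventions here, but only the vanishing of the total matters. Likewise, I expect no integrability issues, because $f$ and $g$ have compact support away from the punctures.

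\textbf{Step 4: conclusion.} The form $\omega$ is harmonic on $\Sig$ and tangential along $\dSig$. Near $p_j$, $\omega-\omega_0=-(du+\star dv)$ is a smooth harmonic form on the whole disk $D_j$. Its complexification is therefore holomorphic on $D_j$, so the principal part of $\alpha$ at $p_j$ is exactly $\sum_\ell(a_{-\ell,j}+ib_{-\ell,j})z^{-\ell}$.

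It remains to check finite weighted energy, $\|\omega\|_\rho<\infty$:
\begin{itemize}
\item Away from the disks, $\rho$ is bounded by Definition~\ref{def:admissible}(a) and $\omega$ is smooth on a compact set.
\item Near $p_j$, compute in the flat metric (Lemma~\ref{lem:conf-inv}). There $|h|^2\lesssim\sum_{\ell\le N_j}|z|^{-2\ell}+1$, and each term has finite $\varrho_j$-energy by the definition \eqref{eq:Nj-def} of $N_j$ and Lemma~\ref{lem:Nj-well-def}(i), the case $m=0$ covering the bounded part.
\end{itemize}
Thus $\omega\in\Lrho(\Sig)$.
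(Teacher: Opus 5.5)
Your proposal is correct and follows the paper's proof essentially step for step. It truncates the model forms, confines the harmonicity defects to annuli, and corrects by $df+\star dg$ through a Dirichlet problem for $g$ and a Neumann problem for $f$, with the residue relation entering exactly as the Neumann compatibility condition via the Stokes computation of Lemma~\ref{lem:residue-necessity}.

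One small slip: $\omega-\omega_0=-(du+\star dv)$ is harmonic only on $\{|z|<r_0\}$, where $\chi_j\equiv1$, and not on all of $D_j$, since $\omega_0$ fails to be harmonic on the transition annulus. That smaller disk is all you need to read off the principal part, so the argument stands.
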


\begin{proof}
The proof is divided into four steps. Step~1 builds a reference form $\omega_0$ by truncating the prescribed local models; Step~2 isolates the harmonicity defects created by the truncation; Step~3 removes them by a smooth correction $\eta$ built from two scalar potentials, the residue hypothesis entering as the compatibility condition of a Neumann problem; Step~4 assembles $\omega=\omega_0+\eta$. Throughout, $\mathring\Sig:=\Sgb\setminus\dSgb$ is the interior of $\Sgb$ as a manifold with boundary; the punctures belong to it, and the interior equations below are required to hold across them.

\medskip
\noindent\textbf{Step 1: the singular reference form.}
For each $j$ with $N_j\ge1$ and each $1\le\ell\le N_j$, let $\omega_{-\ell,j}^{(a)}$ and $\omega_{-\ell,j}^{(b)}$ denote the model forms \eqref{eq:model-forms}, expressed in polar coordinates centered at $p_j$. They are smooth, closed and coclosed on $D_j^\ast$ and are, up to sign, the real and imaginary parts of the holomorphic form $z^{-\ell}\dd z$. Fix $\epsilon_0>0$ smaller than half the radius of every $D_j$ and than half the distance between any two punctures and between any puncture and $\dSgb$. Let $\chi\in C^\infty([0,\infty))$ satisfy $\chi\equiv1$ on $[0,\epsilon_0/2]$, $\chi\equiv0$ on $[\epsilon_0,\infty)$, and $0\le\chi\le1$, and define, on $\mathring\Sig\setminus\{p_1,\dots,p_r\}$, 
\[
\omega_0:=\sum_{j=1}^r\chi(r_j)\sum_{\ell=1}^{N_j}\Bigl(a_{-\ell,j}\,\omega_{-\ell,j}^{(a)}+b_{-\ell,j}\,\omega_{-\ell,j}^{(b)}\Bigr),
\]
where $r_j$ is the radial coordinate centered at $p_j$, the inner sum is empty when $N_j=0$, and $\omega_0$ is extended by $0$ outside $\bigcup_jD_j(\epsilon_0)$. By construction, $\omega_0$ agrees with the
prescribed model on $\{0<r_j<\epsilon_0/2\}$ and vanishes on $\{r_j>\epsilon_0\}$ for every~$j$; in particular, $\omega_0\equiv0$ on a neighborhood of $\dSgb$.

\medskip
\noindent\textbf{Step 2: the harmonicity defects.}
Define $F_2:=d\mskip1mu\omega_0$ and $F_0:=d^\ast\omega_0$. Both vanish on $\{0<r_j<\epsilon_0/2\}$, where $\omega_0$ coincides with the closed and coclosed model, and on $\{r_j>\epsilon_0\}$, where $\omega_0\equiv0$; hence they are smooth, supported in the annuli $A_j:=\{\epsilon_0/2\le r_j\le\epsilon_0\}$, and their extensions by zero define a smooth two-form $F_2$ and a smooth function $F_0$ on all of $\Sgb$.

\medskip
\noindent\textbf{Step 3: removing the defects by scalar potentials.}
We seek a one-form $\eta$, smooth on all of $\Sgb$, punctures included, solving
\begin{equation}\label{eq:target-system}
\begin{cases}
d\mskip1mu\eta=-F_2 & \text{on }\mathring\Sig,\\
d^\ast\eta=-F_0 & \text{on }\mathring\Sig,\\
\iota_\nu\eta=0 & \text{on }\dSgb.
\end{cases}
\end{equation}
Write $F_2=\widehat F_2\dd\mu$ with $\widehat F_2\in C^\infty(\Sgb)$ and make the ansatz
\[
\eta=d f+\star\mskip1mu dg,\qquad f,g\in C^\infty(\Sgb).
\]
By the elementary two-dimensional identities,
\begin{equation}\label{eq:2d-identities}
d(df)=0, \qquad d^\ast(\star dg)=0, \qquad d(\star dg)=(\Delta g)\dd\mu, \qquad d^\ast(df)=-\Delta f,
\end{equation}
with $\Delta=\diver\nabla$, which follow from $d^2=0$, $d^\ast=-\star\! d\star$, $\star\star=-\mathrm{id}$ and $d^\ast d=-\Delta$ on one-forms, one has $d\eta=(\Delta g)\dd\mu$ and $d^\ast\eta=-\Delta f$, so the two interior equations in \eqref{eq:target-system} hold if and only if
\[
\Delta g=-\widehat F_2\ \text{ and }\ \Delta f=F_0 \ \text{ on }\ \mathring\Sig.
\]

For the boundary condition, decomposing $dg$ in the frame $\{\nu,\tau\}$ and applying \eqref{eq:star-frame} gives $\iota_\nu\eta=\p_\nu f-\p_\tau g$ along $\dSgb$, which is satisfied by imposing
\begin{equation}\label{eq:BC}
g=0\ \text{ and } \ \p_\nu f=0\ \text{ on }\ \dSgb,
\end{equation}
the first of which forces $\p_\tau g=0$. Condition \eqref{eq:BC} is stronger than $\p_\nu f=\p_\tau g$, but it gives a Dirichlet problem for $g$ and a Neumann problem for $f$, which can be solved separately.

The Dirichlet problem $\Delta g=-\widehat F_2$ on $\mathring\Sig$, $g=0$ on $\dSgb$, has a unique solution, smooth on $\Sgb$ by elliptic regularity, since by the maximum principle $0$ is not a Dirichlet eigenvalue; no compatibility condition is required. The Neumann problem $\Delta f=F_0$ on $\mathring\Sig$, $\p_\nu f=0$ on $\dSgb$, is solvable, by the Fredholm alternative for the Neumann Laplacian, whose kernel consists of constant functions, if and only if
\begin{equation}\label{eq:compat}
\int_{\Sgb}F_0\dd\mu=0,
\end{equation}
and the solution is then smooth and unique up to an additive constant, which is irrelevant since only $df$ enters $\eta$. We verify \eqref{eq:compat} by the residue computation of \S\ref{subsec:necessity}. Let $\Sgb_{r_0}=\Sgb\setminus\bigcup_jD_j(r_0)$ with $r_0<\epsilon_0/2$. Since $F_0$ vanishes on the removed disks and $F_0\dd\mu=-d(\star\mskip1mu\omega_0)$ away from the punctures by \eqref{eq:2d-identities}, Stokes' theorem gives, exactly as in \eqref{eq:stokes},
\[
\int_{\Sgb}F_0\dd\mu=-\int_{\dSgb}\star\mskip1mu\omega_0+\sum_{j=1}^r\int_{C_j^+}\star\mskip1mu\omega_0.
\]
The outer integral vanishes because $\omega_0\equiv0$ near $\dSgb$; on $C_j^+$ the form $\omega_0$ coincides with the prescribed model, so \eqref{eq:period} applies verbatim and gives $\int_{C_j^+}\star\mskip1mu\omega_0=2\pi a_{-1,j}$. Hence 
\[
\int_{\Sgb}F_0\dd\mu=2\pi\sum_{j:N_j\ge1}a_{-1,j}=0
\]
by the residue hypothesis, and the Neumann problem is solvable.

\medskip
\noindent\textbf{Step 4: synthesis.} 
Set $\eta:=df+\star\,dg$ and $\omega:=\omega_0+\eta$ on $\mathring\Sig\setminus\{p_1,\dots,p_r\}$. By Step~3, $d\mskip1mu\omega=F_2-F_2=0$ and $d^\ast\omega=F_0-F_0=0$, up to $\dSgb$ by continuity, so $\omega$ is harmonic; near $\dSgb$ one has $\omega=\eta$, which is tangential by \eqref{eq:BC}. Near each $p_j$, the form $\eta$ is smooth, as $f,g\in C^\infty(\Sgb)$, while $\omega_0$ equals the prescribed model exactly, so that the Laurent principal part of $\alpha=\omega+i\star\omega$ at $p_j$ is $\sum_{\ell=1}^{N_j}(a_{-\ell,j}+i\mskip1mu b_{-\ell,j})z^{-\ell}$ and $\omega$ has no singularity at the punctures with $N_j=0$. Finally, away from the punctures, $\omega$ and $\rho$ are bounded, while near $p_j$, the form $\omega$ differs from the model by the bounded $\eta$ and each model term with $\ell\le N_j$ has finite $\rho$-energy by the radial computation of Lemma~\ref{lem:laurent}; hence $\|\omega\|_\rho<+\infty$ and $\omega\in\Lrho(\Sig)$ realizes the prescribed data. 
\end{proof}

\subsection{The dimension formula}\label{subsec:dim-formula}

We now assemble the four lemmas. Consider the \emph{principal part map}
\begin{align*}
P:\Lrho(\Sig)&\longrightarrow\bigoplus_{j=1}^r\R^{2N_j},\\
\omega&\longmapsto\bigl(a_{-\ell,j}(\omega),b_{-\ell,j}(\omega)\bigr)_{1\le\ell\le N_j,\, 1\le j\le r},
\end{align*}
which records the real and imaginary parts of the singular Laurent coefficients of $\alpha=\omega+i\star\omega$ at every puncture; it is a linear map that is well defined by Lemma~\ref{lem:laurent}.

\begin{lemma}\label{lem:kernel-P}
The kernel of $P$ is
\[
\ker P=\bigl\{\omega\in\Lrho(\Sig):\alpha\text{ extends holomorphically across every }p_j\bigr\},
\]
and this space is isomorphic to $\HH^1_T(\Sgb)$. Consequently,
\[
\dim_\R\ker P=2g+k-1.
\]
\end{lemma}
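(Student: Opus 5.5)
The plan is to identify $\ker P$ directly with the forms that extend smoothly across the punctures, and then invoke the classical count \eqref{eq:dim-unweighted}.

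\textbf{Description of the kernel.} By Lemma~\ref{lem:laurent}, every $\omega\in\Lrho(\Sig)$ has a complexification $\alpha=\omega+i\star\omega$ whose Laurent expansion at $p_j$ has no coefficients $c_n$ with $n<-N_j$. The map $P$ records exactly the remaining singular coefficients $c_{-\ell,j}=a_{-\ell,j}+ib_{-\ell,j}$ for $1\le\ell\le N_j$. Hence $P(\omega)=0$ holds if and only if all negative-index coefficients vanish at every puncture. This is the same as saying that $h$ has a removable singularity at each $p_j$, i.e.\ that $\alpha$ extends holomorphically across every $p_j$. For punctures with $N_j=0$ no condition is imposed, and extension is automatic by Lemma~\ref{lem:laurent}. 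This gives the stated description of $\ker P$.

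\textbf{The isomorphism.} I will show that restriction $\HH^1_T(\Sgb)\to\ker P$, $\omega\mapsto\omega|_\Sig$, is a linear bijection.

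\emph{Well defined.} Let $\omega\in\HH^1_T(\Sgb)$. Its restriction is harmonic, smooth up to $\dSig=\dSgb$ and tangential there. It is bounded on the compact surface $\Sgb$, with respect to $h$ or, near each $p_j$, in the flat coordinate metric. The weight $\rho$ is bounded by admissibility (a), and by Lemma~\ref{lem:conf-inv}(iii) together with Definition~\ref{def:admissible}(b) the weighted energy near $p_j$ is comparable to $\int_0^{R_1}\varrho_j(r)\,r\dd r$. This integral is finite because $m=0$ belongs to the convergent set by Lemma~\ref{lem:Nj-well-def}(i). Away from the disks, $\rho$ is bounded and the region is compact. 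So $\omega|_\Sig\in\Lrho(\Sig)$, and since $\alpha$ is holomorphic at $p_j$, its principal part vanishes; thus $\omega|_\Sig\in\ker P$.

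\emph{Injective.} Injectivity follows from the density of $\Sig$ in $\Sgb$ together with continuity.

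\emph{Surjective.} Given $\omega\in\ker P$, the form $\alpha$ extends holomorphically across each $p_j$. Then $\omega=\Real(\alpha)$ extends as a smooth one-form on $\Sgb$. It is closed and coclosed there, since $d\omega=0$ and $d\star\omega=0$ hold on the punctured disk and extend to $p_j$ by continuity of the smooth extension; equivalently, these are the real and imaginary parts of a holomorphic form. Tangentiality along $\dSgb=\dSig$ is unchanged. So $\omega$ lies in $\HH^1_T(\Sgb)$.

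\textbf{Dimension count and main obstacle.} The dimension formula then follows from \eqref{eq:dim-unweighted}. The only step needing any care is the smooth extension in the surjectivity argument. It is handled by the dictionary \eqref{eq:complexification}: in the coordinate $z$, $\omega=P\dd x+Q\dd y$ with $P-iQ=h$ holomorphic on the full disk, so $P$ and $Q$ are real-analytic there and the harmonicity equations persist at $p_j$. I do not expect a genuine obstacle; the lemma is bookkeeping on top of Lemma~\ref{lem:laurent} and the classical Hodge count.
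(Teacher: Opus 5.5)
Your proposal is correct and follows the same route as the paper. You use Lemma~\ref{lem:laurent} to identify $\ker P$ with the forms whose complexification has removable singularities, show that restriction from $\HH^1_T(\Sgb)$ and extension across the punctures are mutually inverse, and conclude with \eqref{eq:dim-unweighted}; your extra details (finite weighted energy via the $m=0$ case of Lemma~\ref{lem:Nj-well-def}, injectivity by density) only spell out what the paper compresses into ``bounded, hence $\rho$-integrable''.
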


\begin{proof}
By Lemma~\ref{lem:laurent}, $\omega\in\ker P$ means $c_{n,j}=0$ for all $n<0$ and all $j$, so $\alpha$, and with it $\omega=\Real\alpha$, extends smoothly across every puncture to a tangential harmonic form on $\Sgb$. Conversely, the restriction to $\Sig$ of any element of $\HH^1_T(\Sgb)$ is harmonic, tangential and bounded, hence $\rho$-integrable, $\rho$ being bounded and $\Sgb$ of finite area, with vanishing principal parts. Restriction and extension are mutually inverse, so $\ker P\cong\HH^1_T(\Sgb)$ and \eqref{eq:dim-unweighted} applies.
\end{proof}

We are now in position to prove the dimension formula announced in the introduction, which we restate for convenience.

\begin{thmdim}
Let $\Sig=\Sgb\setminus\{p_1,\dots,p_r\}$ be as in \S\ref{subsec:setting} and let $\rho$ be an admissible weight with local orders $N_1,\dots,N_r$. Then
\begin{equation}\label{eq:dim-formula}
\dim_\R\Lrho(\Sig)=2g+k-1+2\sum_{j=1}^rN_j-\varepsilon,
\end{equation}
where
\[
\varepsilon=
\begin{cases}
1,&\text{if }\ \sum_{j}N_j\ge1,\\[4pt]
0,&\text{if }\ N_1=\cdots=N_r=0.
\end{cases}
\]
\end{thmdim}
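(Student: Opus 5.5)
The proof is rank--nullity applied to the principal part map $P:\Lrho(\Sig)\to\bigoplus_j\R^{2N_j}$, which is well defined and linear by Lemma~\ref{lem:laurent}. This gives
\[
\dim_\R\Lrho(\Sig)=\dim_\R\ker P+\dim_\R\operatorname{im}P .
\]
Lemma~\ref{lem:kernel-P} supplies $\dim_\R\ker P=2g+k-1$. It remains to identify the image of $P$ and compute its dimension; finite-dimensionality of $\Lrho(\Sig)$ will follow at once.

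\textbf{The image of $P$.} Let $\Lambda:\bigoplus_j\R^{2N_j}\to\R$ be the linear functional
\[
\Lambda\bigl((a_{-\ell,j},b_{-\ell,j})_{\ell,j}\bigr):=\sum_{j:N_j\ge1}a_{-1,j}.
\]
I claim $\operatorname{im}P=\ker\Lambda$.
\begin{itemize}
\item By Lemma~\ref{lem:residue-necessity}, every $\omega\in\Lrho(\Sig)$ satisfies $\Lambda(P\omega)=0$, so $\operatorname{im}P\subseteq\ker\Lambda$.
\item Conversely, given data in $\ker\Lambda$, Lemma~\ref{lem:sufficiency} produces $\omega\in\Lrho(\Sig)$ whose complexification has exactly the prescribed principal parts. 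Hence $P\omega$ equals the given data, and $\ker\Lambda\subseteq\operatorname{im}P$.
\end{itemize}
The matching of coordinates uses the identification \eqref{eq:coefficients_ab}: the real parameters $(a_{-\ell,j},b_{-\ell,j})$ are exactly the real and imaginary parts of $c_{-\ell,j}$. Lemma~\ref{lem:local-count} confirms that these $2N_j$ parameters per puncture are independent coordinates, so the target space really has dimension $2\sum_jN_j$.

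\textbf{The case split.} If all $N_j=0$, the target is $\{0\}$, $\Lambda$ is the zero functional on the zero space, and $\dim\operatorname{im}P=0$, which gives $\varepsilon=0$. If $\sum_jN_j\ge1$, then at least one coordinate $a_{-1,j}$ exists. The functional $\Lambda$ is a nonzero sum of coordinate functionals, so it is surjective onto $\R$ and
\[
\dim\ker\Lambda=2\sum_jN_j-1 ,
\]
which gives $\varepsilon=1$.

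All the analytic work is already contained in the earlier lemmas. The only genuine point to check is that the sufficiency construction lands back in the same space and realizes the data exactly. Lemma~\ref{lem:sufficiency} guarantees both. Adding the kernel and image dimensions gives \eqref{eq:dim-formula}.
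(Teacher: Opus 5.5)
Your proposal is correct and follows the paper's proof essentially step for step. The paper also applies rank--nullity to the principal part map $P$, takes the kernel from Lemma~\ref{lem:kernel-P}, identifies $\operatorname{im}P$ with the kernel of the residue functional via Lemmas~\ref{lem:residue-necessity} and~\ref{lem:sufficiency}, and observes that this functional has rank $\varepsilon$.
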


\begin{proof}[Proof of Theorem~\ref{thm:dim-formula}]
By the rank--nullity theorem,
\[
\dim_\R\Lrho(\Sig)=\dim_\R\ker P+\dim_\R\im P,
\]
the kernel being computed in Lemma~\ref{lem:kernel-P}. It remains to compute the image exactly. Define the \emph{residue functional}
\[
\lambda:\bigoplus_{j=1}^r\R^{2N_j}\to\R,\qquad\lambda\bigl((a_{-\ell,j},b_{-\ell,j})_{\ell,j}\bigr)=\sum_{j\,:\,N_j\ge1}a_{-1,j},
\]
which reads off the sum of the real residue coordinates. Observe that by Lemma~\ref{lem:residue-necessity}, $\im P\subseteq\ker\lambda$; and by Lemma~\ref{lem:sufficiency}, $\im P\supseteq\ker\lambda$. Hence $\im P=\ker\lambda$ and
\[
\dim_\R\im P=2\sum_jN_j-\rank\lambda.
\]
If $\sum_jN_j\ge1$, then $\lambda$ is onto $\R$ and $\rank\lambda=1$; if all $N_j=0$, its domain is the zero space and $\rank\lambda=0$. In both cases, $\rank\lambda=\varepsilon$, and adding the two dimensions gives \eqref{eq:dim-formula}.
\end{proof}

\begin{remark}[The case $\varepsilon=0$]\label{rem:epsilon}
When $N_1=\cdots=N_r=0$, as for $\rho\equiv1$, there are no residues and no residue relation, and the dimension is $2g+k-1$; so the formula $2g+k+2\sum_jN_j-2$ is not correct in this case. In Section~\ref{sec:fbms} this case does not occur, since $N_j=d_j+1\ge2$ for every end.
\end{remark}

\section{The weighted space of a free boundary minimal surface}\label{sec:fbms}

Throughout this section, $\Sig$ is a complete, noncompact, orientable, immersed free boundary minimal surface of finite Morse index in $\Om\subset\R^3$, with compact boundary $\dSig\subset\dOm$, of genus $g$, with $k$ boundary components and $r$ ends of multiplicities $d_1,\dots,d_r$. By \eqref{eq:model}, it is of the form \eqref{eq:setting}, so Section~\ref{sec:weighted} applies verbatim; the weight is $\rho$ of \eqref{eq:rho-def}, and we write $\Lstar(\Sig)=\Lrho(\Sig)$ for this choice.

\subsection{The local orders of the logarithmic weight}

\begin{lemma}\label{lem:N-computation}
The weight \eqref{eq:rho-def} is admissible on $\Sig$ in the sense of Definition~\ref{def:admissible}, with local orders
\[
N_j=d_j+1\ \text{ at every puncture }\ p_j.
\]
\end{lemma}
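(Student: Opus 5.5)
We verify the three conditions (a)--(c) of Definition~\ref{def:admissible} for the weight \eqref{eq:rho-def}, and then read off $N_j$ from the borderline computation of Remark~\ref{rem:log-borderline}. The coordinate disks $D_j$ are those of \S\ref{subsec:conformal}, where the asymptotics \eqref{eq:dist-end} hold; shrinking $R_1$ if necessary, we may assume $R_1<1$ and that \eqref{eq:end-consequences} holds with uniform constants on $\{0<|z|\le R_1\}$.

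For condition (a), note that $\rho\le\rho(0)$ on all of $\R^3$, so $\rho$ is bounded on $\Sig$. The set $K:=\Sgb\setminus\bigcup_jD_j$ is a compact subset of $\Sig$, since it avoids the punctures. Hence $X(K)$ is a compact subset of $\R^3$, and $\rho$, being continuous and positive, is bounded below on it by a positive constant.

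For condition (b), we take the radial profile
\[
\varrho_j(r):=r^{2d_j}\bigl(\log(1/r)\bigr)^{-2},\qquad 0<r\le R_1.
\]
By \eqref{eq:end-consequences}, there are constants with $1+|X(z)|^2\asymp r^{-2d_j}$. Moreover, $\log(2+|X(z)|)\sim d_j\log(1/r)$, and since this ratio tends to the positive constant $d_j$, it is bounded above and below on $\{0<|z|\le R_1\}$. The upper bound is automatic once $R_1$ is small. The lower bound uses $\log(2+|X|)\ge\log 2>0$ together with $\log(1/r)\le\log(1/R_1)$ on compact pieces; any residual annulus where the asymptotic ratio is not yet close to $d_j$ can be absorbed by continuity and positivity. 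Squaring and inverting gives $c\,\varrho_j(|z|)\le\rho(z)\le C\varrho_j(|z|)$, which is exactly \eqref{eq:rho-profile} made quantitative.

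For condition (c), and for the value of $N_j$, we apply Remark~\ref{rem:log-borderline} with $\kappa=d_j\in\Z_{\ge0}$. That computation shows the integral in \eqref{eq:Nj-def} converges for $m\le d_j+1$ and diverges for $m\ge d_j+2$. Hence $N_j=d_j+1$, which is finite, and the weight is admissible.

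The only mildly delicate point is the uniformity of the comparison in (b) all the way up to $r=R_1$. The asymptotics \eqref{eq:dist-end} are stated only as $z\to0$, so we handle this by first choosing $R_1$ small enough that the asymptotic bounds hold with fixed constants. This is legitimate because $N_j$ is defined through convergence at $r=0$ and is independent of $R_1$ by Lemma~\ref{lem:Nj-well-def}.
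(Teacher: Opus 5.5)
Your proposal is correct and follows the paper's proof essentially step by step. You get (a) from $\rho\le\rho(0)$ and the compactness of $\Sgb\setminus\bigcup_jD_j$ inside $\Sig$, (b) by inserting \eqref{eq:end-consequences} into \eqref{eq:rho-def} to obtain the radial profile $r^{2d_j}(\log(1/r))^{-2}$ (with your extra care in shrinking $R_1<1$ for uniform constants, which the paper leaves implicit), and (c) by Remark~\ref{rem:log-borderline} with $\kappa=d_j$.
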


\begin{proof}
We verify the three conditions of Definition~\ref{def:admissible}.

\textrm{(a)} The function $\rho$ is bounded above by $\rho(0)=1/(\log2)^2$ on all of $\R^3$. Away from the punctures, that is, on $\Sgb\setminus\bigcup_jD_j$, the corresponding subset of $\Sig$ is compact, the immersion being proper, so $|X|$ is bounded there and $\rho$ is bounded away from~$0$.

\textrm{(b)} Near $p_j$, we insert the asymptotics \eqref{eq:end-consequences} into \eqref{eq:rho-def}, obtaining
\[
\rho\ \asymp\ \frac{1}{r^{-2d_j}\bigl(d_j\log(1/r)\bigr)^2}\ \asymp\ r^{2d_j}\bigl(\log(1/r)\bigr)^{-2}=:\varrho_j(r),
\]
which is a radial profile in the required sense. This is \eqref{eq:rho-profile}, already computed in Lemma~\ref{lem:rho-integrable}.

\textrm{(c)} The profile $\varrho_j$ is of the borderline logarithmic type of Remark~\ref{rem:log-borderline}, with $\kappa=d_j$. By that computation
\[
N_j=\kappa+1=d_j+1<+\infty,
\]
so the local orders are finite and $\rho$ is admissible.
\end{proof}

Note that the weight $(1+|x|^2)^{-1}$ alone would give $N_j=d_j$, by Remark~\ref{rem:examples}(2); the logarithmic factor gives one more order, as in \cite[Prop.~3.1]{CM2}.

\subsection{The dimension}

\begin{corollary}\label{cor:fbms}
With $\Sig$ as above,
\begin{equation}\label{eq:dim-fbms}
\dim_\R\Lstar(\Sig)
=2g+k+2\sum_{j=1}^r(d_j+1)-2.
\end{equation}
In particular, $\dim_\R\Lstar(\Sig)\ge2g+k+2\ge3$.
\end{corollary}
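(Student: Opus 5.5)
This is a direct specialization of Theorem~\ref{thm:dim-formula} to the logarithmic weight, so the plan is to check the hypotheses, read off the local orders, and do the arithmetic.

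\emph{Step 1: place $\Sig$ in the abstract setting.} By Theorem~\ref{thm:HS-structure} and \eqref{eq:model}, $\Sig\cong\Sgb\setminus\{p_1,\dots,p_r\}$, where $\Sgb$ is a compact Riemann surface of genus $g$ with $k\ge1$ boundary components and the punctures are interior. Thus we are exactly in the setting \eqref{eq:setting}. Since the space $\Lstar(\Sig)$ depends only on the conformal structure and on $\rho$ as a function (Lemma~\ref{lem:conf-inv}), we may identify $\Lstar(\Sig)=\Lrho(\Sig)$ with $\rho$ the restriction of \eqref{eq:rho-def}.

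\emph{Step 2: invoke Theorem~\ref{thm:dim-formula}.} Lemma~\ref{lem:N-computation} says $\rho$ is admissible with $N_j=d_j+1$. Because $\Sig$ is noncompact, $r\ge1$. Since $d_j\ge1$, we get $\sum_jN_j\ge2\ge1$, so $\varepsilon=1$. Theorem~\ref{thm:dim-formula} then gives
\[
\dim_\R\Lstar(\Sig)=2g+k-1+2\sum_{j=1}^r(d_j+1)-1,
\]
which is \eqref{eq:dim-fbms}.

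\emph{Step 3: the lower bound.} From $r\ge1$ and $d_j\ge1$ we get $2\sum_j(d_j+1)\ge4$, hence $\dim_\R\Lstar(\Sig)\ge2g+k+2$. Using $g\ge0$ and $k\ge1$, this is at least $3$.

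There is no genuine obstacle, since all the analysis is already contained in Lemma~\ref{lem:N-computation} and Theorem~\ref{thm:dim-formula}. The one point worth checking is that $\Lstar(\Sig)$ is defined through vector fields while $\Lrho(\Sig)$ is defined through one-forms. The dictionary $\xi\leftrightarrow\xi^\flat$ of \S\ref{subsec:fields} preserves harmonicity, the tangency condition and the pointwise norm, so the two spaces coincide.
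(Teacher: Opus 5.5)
Your proposal is correct and follows the paper's own argument: Lemma~\ref{lem:N-computation} gives admissibility with $N_j=d_j+1$, noncompactness gives $r\ge1$ and hence $\varepsilon=1$, substituting into Theorem~\ref{thm:dim-formula} yields \eqref{eq:dim-fbms}, and $r\ge1$, $d_j\ge1$, $k\ge1$, $g\ge0$ give the lower bound. Your extra check that the vector-field and one-form versions of the space coincide is harmless and is already covered by the dictionary in \S\ref{subsec:fields}.
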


\begin{proof}
By Lemma~\ref{lem:N-computation}, the weight is admissible with $N_j=d_j+1\ge2$. Since $\Sig$ is noncompact, it has at least one end, so $\sum_jN_j\ge2>1$ and therefore $\varepsilon=1$ in Theorem~\ref{thm:dim-formula}. Substituting $N_j=d_j+1$ into \eqref{eq:dim-formula} gives \eqref{eq:dim-fbms}. For the last assertion, use $r\ge1$, $d_1\ge1$, $k\ge1$ and $g\ge0$.
\end{proof}

\begin{remark}[Comparison with the boundaryless case]\label{rem:compare-CM}
For a complete minimal surface without boundary, of finite index, genus $g$ and $r$ ends of multiplicities $d_j$, Chodosh and Máximo \cite[Prop.~3.2]{CM2} compute the corresponding weighted dimension to be $2g+2\sum_j(d_j+1)-2$, via Riemann--Roch on the closed compactification. Formula \eqref{eq:dim-fbms} is the analogue for surfaces with boundary, where $2g$ is replaced by $2g+k-1$; see Remark~\ref{rem:double}.
\end{remark}

\subsection{Optimality of the weight}

\begin{remark}\label{rem:optimality}
A weight decaying faster than $\rho$ does not improve the estimate. Indeed, the weight is used in two places: in the computation of the $N_j$ and in the estimate \eqref{eq:cutoff-vs-rho} for the cut-off. Iterated logarithmic factors do not change the $N_j$: the argument of Remark~\ref{rem:log-borderline} shows that for $m\ge\kappa+2$ the integrand is bounded below by $r^{-3}$ times powers of $\log(1/r)$, and the integral diverges. On the other hand, for a weight $\rho\asymp(1+|x|^2)^{-(1+\delta)}$, $\delta>0$, we have $N_j>d_j+1$ as soon as $\delta>1/d_j$, but there is no cut-off function $\varphi$ with $\varphi\equiv1$ near $\dSig$, compact support and $|\nabla\varphi|^2\le C\rho$. In fact, such a function would require $\int^{+\infty}\sqrt{\rho(s)}\dd s=+\infty$ along each end, which holds for the weight \eqref{eq:rho-def}, since $\sqrt\rho\asymp(s\log s)^{-1}$, but not for any faster power decay.
\end{remark}

\section{The boundary term and the rotational fields}\label{sec:boundary}

Throughout this section, $\Sig$ is as in Section~\ref{sec:fbms}, and we use the $\R^3$-valued second variation \eqref{eq:Q-vec}. For a tangent field $\xi$ along $\dSig$, we write
\begin{equation}\label{eq:B-vector}
\cB\mskip1mu\xi:=D_\nu\xi-\II_{\dOm}(N,N)\xi,
\end{equation}
so that $\cB\mskip1mu\xi=\sum_i(\cB u_i)E_i$ componentwise, in the notation \eqref{eq:coordinates}.

\subsection{The key identity}\label{subsec:key-identity}

The following lemma computes the second variation on the coordinates of a harmonic field tangent to $\dSig$. Note that no integrability condition on $\xi$ is required.

\begin{lemma}\label{lem:key-identity}
Let $\xi\in\HH^1_T(\Sig)$ and let $\varphi\in C_c^\infty(\Sig)$ satisfy $\varphi\equiv1$ on a neighborhood of $\dSig$. Then
\begin{equation}\label{eq:key-identity}
\cQ(\varphi\xi,\varphi\xi)=\int_\Sig|\nabla\varphi|^2|\xi|^2\dd\mu-\int_{\dSig}H_{\dOm}|\xi|^2\dd\sigma,
\end{equation}
where $\varphi\xi=(\varphi u_1,\varphi u_2,\varphi u_3)$ and, by \eqref{eq:Q-vec}, $\cQ(\varphi\xi,\varphi\xi)=\sum_{i=1}^3\cQ(\varphi u_i,\varphi u_i)$.
\end{lemma}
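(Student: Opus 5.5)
Starting from the second form of \eqref{eq:Q}, I will apply it componentwise to $\phi=\varphi u_i$ and sum over $i$:
\[
\cQ(\varphi\xi,\varphi\xi)=-\sum_i\int_\Sig\varphi u_i\,\cJ(\varphi u_i)\dd\mu+\sum_i\int_{\dSig}\varphi u_i\,\cB(\varphi u_i)\dd\sigma .
\]
For the interior term, expand $\cJ(\varphi u_i)=\varphi\cJ u_i+2\langle\nabla\varphi,\nabla u_i\rangle+u_i\Delta\varphi$. By \eqref{eq:Jui}, $\sum_iu_i\cJ u_i=-2\langle\nabla\xi,A\rangle\sum_iu_ig_i=0$, since $\xi$ is tangent to $\Sig$. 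This leaves
\[
-\int_\Sig\Bigl(\varphi\langle\nabla\varphi,\nabla|\xi|^2\rangle+\varphi\Delta\varphi\,|\xi|^2\Bigr)\dd\mu .
\]
I then integrate by parts with $\diver(\varphi|\xi|^2\nabla\varphi)$. Because $\varphi$ has compact support and $\nabla\varphi=0$ near $\dSig$ (as $\varphi\equiv1$ there), the boundary contribution vanishes. The first term becomes $\int|\xi|^2(|\nabla\varphi|^2+\varphi\Delta\varphi)$, so the interior part equals exactly $\int_\Sig|\nabla\varphi|^2|\xi|^2\dd\mu$.

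Since $\varphi\equiv1$ near $\dSig$, the boundary term reduces to $\int_{\dSig}\langle\xi,\cB\xi\rangle\dd\sigma$, with $\cB$ as in \eqref{eq:B-vector}. This equals
\[
\int_{\dSig}\Bigl(\langle\xi,D_\nu\xi\rangle-\II_{\dOm}(N,N)|\xi|^2\Bigr)\dd\sigma.
\]
The main step, and the main obstacle, is to show that this integrand integrates to $-\int H_{\dOm}|\xi|^2$. Along $\dSig$, tangency gives $\xi=f\tau$ with $f=\langle\xi,\tau\rangle$. Then $\langle\xi,D_\nu\xi\rangle=\langle\xi,\nabla_\nu\xi\rangle=f\langle\nabla_\nu\xi,\tau\rangle$, because $\xi$ is tangent to $\Sig$.

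Next I use the symmetry in \eqref{eq:harmonic-field}, $\langle\nabla_\nu\xi,\tau\rangle=\langle\nabla_\tau\xi,\nu\rangle$. Differentiating $\langle\xi,\nu\rangle=0$ along $\tau$ gives $\langle\nabla_\tau\xi,\nu\rangle=-\langle\xi,\nabla_\tau\nu\rangle=-f\langle\nabla_\tau\nu,\tau\rangle$. Since $\nu=\eta$ along $\dSig$ and $\tau\in T\dOm$, we have $\langle\nabla_\tau\nu,\tau\rangle=\langle D_\tau\eta,\tau\rangle=\II_{\dOm}(\tau,\tau)$. Hence
\[
\langle\xi,D_\nu\xi\rangle=-f^2\,\II_{\dOm}(\tau,\tau)=-|\xi|^2\II_{\dOm}(\tau,\tau).
\]
Finally, $\{\tau,N\}$ is an orthonormal frame of $T\dOm$ along $\dSig$, because $N\in T\dOm$ by the free boundary condition. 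Therefore $\II_{\dOm}(\tau,\tau)+\II_{\dOm}(N,N)=H_{\dOm}$, and the boundary integrand is pointwise $-H_{\dOm}|\xi|^2$. This holds pointwise, not just after integration, and combining it with the interior part gives \eqref{eq:key-identity}.

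Three points need care, and I will check them as I go. The identification $\langle\xi,D_\nu\xi\rangle=\langle\xi,\nabla_\nu\xi\rangle$ holds because $\xi$ is tangent to $\Sig$ everywhere, so only the tangential part of $D_\nu\xi$ contributes. The sign convention for $\II_{\dOm}$ must be taken with respect to the outward normal $\eta$. And no integrability of $\xi$ is needed, since all the integrals involved are over $\supp\varphi$, which is compact.
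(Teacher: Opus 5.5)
Your proposal is correct and follows the paper's proof essentially line by line: the same expansion of $\cJ(\varphi u_i)$ with the $\langle\nabla\xi,A\rangle$ terms cancelling via $\sum_i u_ig_i=\langle\xi,N\rangle=0$, the same divergence identity for $\varphi|\xi|^2\nabla\varphi$, and the same pointwise boundary computation using $\{\tau,N\}$ as an orthonormal frame of $T\dOm$. The only cosmetic difference is in one boundary step. You apply the symmetry of $\nabla\xi$ to the pair $(\nu,\tau)$ and differentiate $\langle\xi,\nu\rangle=0$ along $\tau$. The paper applies it to $(\nu,\xi)$ and writes $\langle\nabla_\xi\xi,\nu\rangle=-\II_{\dOm}(\xi,\xi)$, which is the same computation since $\xi=f\tau$ along $\dSig$.
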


\begin{proof}
Integrating \eqref{eq:Q} by parts, for each $i$,
\[
\begin{aligned}
\cQ(\varphi u_i,\varphi u_i)\,={}&-\int_\Sig\varphi u_i\bigl(\Delta(\varphi u_i)+|A|^2\varphi u_i\bigr)\dd\mu\\[4pt]
&+\int_{\dSig}\bigl(\p_\nu(\varphi u_i)-\II_{\dOm}(N,N)\varphi u_i\bigr)\varphi u_i\dd\sigma.
\end{aligned}
\]
Since $\varphi\equiv1$ near $\dSig$, along $\dSig$ one has $\p_\nu(\varphi u_i)=\p_\nu u_i$ and $\varphi u_i=u_i$, so the boundary integrand is $\bigl(\p_\nu u_i-\II_{\dOm}(N,N)u_i\bigr)u_i$. For the interior integrand, we expand
\[
\Delta(\varphi u_i)+|A|^2\varphi u_i=\varphi\mskip1mu\cJ u_i+u_i\Delta\varphi+2\langle\nabla\varphi,\nabla u_i\rangle
\]
and use $\cJ u_i=-2g_i\langle\nabla\xi,A\rangle$ from \eqref{eq:Jui}. Summing over $i$, the term carrying $\langle\nabla\xi,A\rangle$ becomes a multiple of $\sum_ig_iu_i=\langle\xi,N\rangle=0$ and drops out. With $\sum_iu_i^2=|\xi|^2$ and $\sum_iu_i\nabla u_i=\tfrac12\nabla|\xi|^2$, summation over $i$ gives
\begin{equation}\label{eq:key-1}
\begin{aligned}
\cQ(\varphi\xi,\varphi\xi)\,={}&-\int_\Sig\Bigl(\varphi\mskip1mu\Delta\varphi\mskip1mu|\xi|^2+\varphi\mskip1mu\bigl\langle\nabla\varphi,\nabla|\xi|^2\bigr\rangle\Bigr)\dd\mu\\
&+\int_{\dSig}\sum_{i=1}^3\bigl(\p_\nu u_i-\II_{\dOm}(N,N)u_i\bigr)u_i\dd\sigma.
\end{aligned}
\end{equation}

We treat the two integrals separately. For the interior one, since $\nabla\varphi=0$ near $\dSig$, the divergence theorem applied to the field $\varphi|\xi|^2\nabla\varphi$, after expanding by the Leibniz rule, yields
\begin{equation}\label{eq:key-2}
-\int_\Sig\Bigl(\varphi\mskip1mu\Delta\varphi\mskip1mu|\xi|^2+\varphi\mskip1mu\bigl\langle\nabla\varphi,\nabla|\xi|^2\bigr\rangle\Bigr)\dd\mu=\int_\Sig|\nabla\varphi|^2|\xi|^2\dd\mu.
\end{equation}

For the boundary integrand, note first that, along $\dSig$, the field $\xi$ is tangent to $\dSig$, so $\xi=f\tau$ for a unit tangent $\tau$ and a function $f$. Using $d\mskip1mu\omega(Y,Z)=\langle\nabla_Y\xi,Z\rangle-\langle\nabla_Z\xi,Y\rangle$ together with $d\mskip1mu\omega=0$, applied to $Y=\nu$ and $Z=\xi$,
\begin{equation}\label{eq:key-3}
\sum_{i=1}^3u_i\p_\nu u_i=\tfrac12\p_\nu|\xi|^2=\langle\nabla_\nu\xi,\xi\rangle=\langle\nabla_\xi\xi,\nu\rangle=-\II_{\dOm}(\xi,\xi),
\end{equation}
the last equality because $\dSig\subset\dOm$, $\xi$ is tangent to $\dOm$ and $\nu=\eta$ is the outward unit normal of $\dOm$ along $\dSig$. Since $\{\tau,N\}$ is an orthonormal basis of $T(\dOm)$ along $\dSig$ and $H_{\dOm}=\tr\II_{\dOm}$,
\begin{equation}\label{eq:key-4}
\II_{\dOm}(\xi,\xi)+\II_{\dOm}(N,N)|\xi|^2=H_{\dOm}|\xi|^2,
\end{equation}
because $\II_{\dOm}(\xi,\xi)=f^2\II_{\dOm}(\tau,\tau)=|\xi|^2\II_{\dOm}(\tau,\tau)$. Combining \eqref{eq:key-3} and \eqref{eq:key-4},
\begin{equation}\label{eq:key-5}
\sum_{i=1}^3\bigl(\p_\nu u_i-\II_{\dOm}(N,N)u_i\bigr)u_i=-\II_{\dOm}(\xi,\xi)-\II_{\dOm}(N,N)|\xi|^2=-H_{\dOm}|\xi|^2.
\end{equation}
Substituting \eqref{eq:key-2} and \eqref{eq:key-5} into \eqref{eq:key-1} gives \eqref{eq:key-identity}.
\end{proof}

\subsection{Strict positivity of the boundary term}\label{subsec:strict}

\begin{corollary}\label{cor:strict-positivity}
Suppose $H_{\dOm}\ge0$ along $\dSig$ and $H_{\dOm}(p)>0$ for some $p\in\dSig$. Then
\begin{equation}\label{eq:strict-pos}
\int_{\dSig}H_{\dOm}|\xi|^2\dd\sigma>0\ \text{ for every } \xi\in\Lstar(\Sig)\setminus\{0\}.
\end{equation}
Consequently, there is $R_2\ge R_0$ such that, for the logarithmic cut-off $\lcut_R$ of \S\ref{subsec:cutoff},
\begin{equation}\label{eq:negative-Q}
\cQ(\lcut_R\xi,\lcut_R\xi)<0\ \text{ for all } R\ge R_2 \text{ and all } \xi\in\Lstar(\Sig)\setminus\{0\}.
\end{equation}
\end{corollary}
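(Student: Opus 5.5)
For the strict positivity \eqref{eq:strict-pos}, I will argue by continuity and unique continuation. Since $H_{\dOm}\ge0$ along $\dSig$, the integral $\int_{\dSig}H_{\dOm}|\xi|^2\dd\sigma$ is nonnegative. Suppose it vanishes for some $\xi\in\Lstar(\Sig)$. The function $H_{\dOm}$ is continuous and $H_{\dOm}(p)>0$, so $H_{\dOm}>0$ on a nonempty open arc $U\subset\dSig$ containing $p$. The field $\xi$ is smooth up to $\dSig$, so vanishing of the integral forces $|\xi|^2\equiv0$ on $U$. Lemma~\ref{lem:unique-cont} applies to elements of $\Lstar(\Sig)$, as noted after its statement, and it then gives $\xi\equiv0$.

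For \eqref{eq:negative-Q}, I will use compactness of the unit sphere of the finite-dimensional space $\Lstar(\Sig)$ (Corollary~\ref{cor:fbms}). The map
\[
\xi\mapsto\int_{\dSig}H_{\dOm}|\xi|^2\dd\sigma
\]
is a quadratic form on $\Lstar(\Sig)$. It is continuous because the space is finite-dimensional and $\dSig$ is compact. By the first step it is positive definite, so there is $c_1>0$ with
\[
\int_{\dSig}H_{\dOm}|\xi|^2\dd\sigma\ge c_1\|\xi\|_\rho^2
\]
for all $\xi\in\Lstar(\Sig)$.

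Next I apply Lemma~\ref{lem:key-identity} with $\varphi=\lcut_R$. This is allowed for $R\ge R_0$, since then $\lcut_R\in C_c^\infty(\Sig)$ and $\lcut_R\equiv1$ near $\dSig$. Combining the identity with \eqref{eq:cutoff-to-zero-2} gives
\[
\cQ(\lcut_R\xi,\lcut_R\xi)\le\bigl(C_0\epsilon_R-c_1\bigr)\|\xi\|_\rho^2 .
\]
Because $\epsilon_R\searrow0$, I choose $R_2\ge R_0$ with $C_0\epsilon_{R_2}<c_1$. Then $\cQ(\lcut_R\xi,\lcut_R\xi)<0$ for all $R\ge R_2$ and all $\xi\ne0$.

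The only delicate point is uniformity in $\xi$. It is handled by two facts: the positive-definite quadratic form has a uniform lower bound on the $\|\cdot\|_\rho$-unit sphere, and $\epsilon_R$ is independent of the individual field.
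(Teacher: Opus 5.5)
Your proof is correct and follows the paper's argument essentially step by step. You use unique continuation (Lemma~\ref{lem:unique-cont}) on an arc where $H_{\dOm}>0$ to get strict positivity. You then take a uniform lower bound for the boundary quadratic form on the $\|\cdot\|_\rho$-unit sphere of the finite-dimensional space $\Lstar(\Sig)$ and combine it with Lemma~\ref{lem:key-identity} and the field-independent tail bound \eqref{eq:cutoff-to-zero-2}.
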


\begin{proof}
Since $H_{\dOm}>0$ at $p$, there is a nonempty open arc $U\subset\dSig$ on which $H_{\dOm}>0$. If $\xi\in\Lstar(\Sig)\setminus\{0\}$ had $|\xi|\equiv0$ on $U$, then $\omega=\xi^\flat$ would vanish on $U$ and Lemma~\ref{lem:unique-cont} would force $\xi\equiv0$; hence $|\xi|\not\equiv0$ on $U$ and \eqref{eq:strict-pos} holds. For \eqref{eq:negative-Q}, the bilinear form $B(\xi,\zeta):=\int_{\dSig}H_{\dOm}\langle\xi,\zeta\rangle\dd\sigma$ is positive definite on $\Lstar(\Sig)$ by \eqref{eq:strict-pos}, so by compactness of the unit sphere $S=\{\|\xi\|_\rho=1\}$, as $\Lstar(\Sig)$ is finite-dimensional, there is $b_0>0$ with $B(\xi,\xi)\ge b_0$ on $S$. Since $\lcut_R\equiv1$ near $\dSig$, Lemma~\ref{lem:key-identity} and the uniform bound \eqref{eq:cutoff-to-zero-2} give
\[
\cQ(\lcut_R\xi,\lcut_R\xi)=\int_\Sig|\nabla\lcut_R|^2|\xi|^2\dd\mu-B(\xi,\xi)\ \le\ C_0\epsilon_R-b_0\ \text{ for }\ \xi\in S,
\]
with $\epsilon_R\searrow0$ independent of $\xi$. Choosing $R_2$ with $C_0\epsilon_{R_2}<b_0$ gives \eqref{eq:negative-Q} on $S$, hence on $\Lstar(\Sig)\setminus\{0\}$ by homogeneity.
\end{proof}

\subsection{The dimension of \texorpdfstring{$\Lst\cap\Lsq$}{}}\label{subsec:dim-Lstar}

For $\vec a\in\R^3$, let $\ell_{\vec a}:=\langle\vec a,X\rangle$; then $\nabla\ell_{\vec a}=\vec a^\top$ and
\begin{equation}\label{eq:pointwise-Lstar}
\left|\star\nabla\ell_{\vec a}\right|^2=\left|\nabla\ell_{\vec a}\right|^2=|\vec a|^2-\langle\vec a,N\rangle^2.
\end{equation}
Every element of $\Lst(\Sig)$ is of the form $\star\nabla\ell_{\vec a}$, since $\star\mskip1mu\xi_i=\star\nabla x_i$.

\begin{proposition}\label{prop:dim-Lstar}
If $\Sig$ has finite Morse index and is not totally geodesic, then
\[
\dim_\R\bigl(\Lst(\Sig)\cap\Lsq(\Sig)\bigr)\le1.
\]
\end{proposition}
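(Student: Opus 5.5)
The argument uses the pointwise identity \eqref{eq:pointwise-Lstar} together with the asymptotics of the ends. Let $\vec a\in\R^3\setminus\{0\}$ be such that $\star\nabla\ell_{\vec a}\in\Lsq(\Sig)$. Here $\Lsq$ requires tangency to $\dSig$, but the plan only needs square integrability. By \eqref{eq:pointwise-Lstar},
\[
\int_\Sig\bigl(|\vec a|^2-\langle\vec a,N\rangle^2\bigr)\dd\mu<+\infty .
\]
Along each end $\Ed_j$ the normal $N$ converges to a constant unit vector $N_j$, and each end has infinite area. By \eqref{eq:conf-factor} the area element near $p_j$ is $\asymp r^{-2(d_j+1)}r\dd r\dd\theta$, which is not integrable at $r=0$. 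If $|\vec a|^2-\langle\vec a,N_j\rangle^2>0$, the integrand is bounded below by a positive constant on a neighbourhood of the end, and the integral diverges. Therefore every such $\vec a$ satisfies $\vec a\parallel N_j$ for every $j$.

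The first consequence is that all limiting normals $N_j$ are equal up to sign, say to $\pm\vec n$. The second is that $\vec a\in\R\vec n$. Hence the set of $\vec a$ with $\star\nabla\ell_{\vec a}$ in $L^2$ is contained in a line. The map $\vec a\mapsto\star\nabla\ell_{\vec a}$ from $\R^3$ onto $\Lst(\Sig)$ is linear, and $\Lst(\Sig)\cap\Lsq(\Sig)$ is a subspace. So the intersection is the image of the linear subspace $\{\vec a:\star\nabla\ell_{\vec a}\in\Lsq\}\subset\R\vec n$, and its dimension is at most $1$.

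A point to check is that the dimension of the image does not exceed the dimension of the preimage subspace. This holds for any linear map: the preimage of $\Lst\cap\Lsq$ under $\vec a\mapsto\star\nabla\ell_{\vec a}$ is a subspace of $\R\vec n$, and the map from it onto $\Lst\cap\Lsq$ is surjective. The non-totally-geodesic hypothesis is not needed for this bound. It only ensures, via Theorem~\ref{thm:Ros}, that $\Lst$ is the natural exceptional space. We should also note that $\nabla\ell_{\vec a}=0$ only for $\vec a=0$ when $\Sig$ is not contained in a plane, but this is not required for the upper bound.

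The main obstacle is justifying that the integrand is bounded below near each end. This needs the convergence of $N$ to a limit along the end, which comes from Jorge--Meeks/Schoen finite total curvature as recalled in \S\ref{subsec:conformal}. It also needs the infinite area of each end, which is immediate from \eqref{eq:conf-factor}. With these two facts the divergence is immediate, and the proof is short.
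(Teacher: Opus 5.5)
Your proof is correct and is essentially the paper's argument: you pass to the subspace $S=\{\vec a:\star\nabla\ell_{\vec a}\in\Lsq(\Sig)\}$, use the convergence of $N$ along each end and the infinite area of the ends to force every nonzero $\vec a\in S$ to be parallel to every limiting normal, and conclude by linearity. This puts $S$ inside a line because $\Sig$ is noncompact and so has $r\ge1$ ends, which you should state explicitly. Your remark is also right that the injectivity of $\vec a\mapsto\star\nabla\ell_{\vec a}$ is not needed for the inequality $\dim T(S)\le\dim S$; injectivity is the only place where the paper uses that $\Sig$ is not totally geodesic.
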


\begin{proof}
Consider the linear map
\[
T:\R^3\to\Lst(\Sig)\subset\HH^1(\Sig), \qquad T(\vec a)=\star\nabla\ell_{\vec a},
\]
which is onto, since $\star\mskip1mu\xi_i=T(E_i)$. As $\Lsq(\Sig)$ is a linear subspace of $\HH^1(\Sig)$
and $T$ is linear,
\[
S:=\bigl\{\vec a\in\R^3:T(\vec a)\in\Lsq(\Sig)\bigr\}
\]
is a linear subspace of $\R^3$ and
\begin{equation}\label{eq:image-S}
\Lst(\Sig)\cap\Lsq(\Sig)=T(S).
\end{equation}
Moreover $T$ is injective: by \eqref{eq:pointwise-Lstar}, $T(\vec a)=0$ if and only if $\vec a^\top\equiv0$, that is, $\vec a$ is everywhere normal to $\Sig$; since $\Sig$ is not totally geodesic, its unit normal $N$ is nonconstant and no nonzero $\vec a$ has this property. By \eqref{eq:image-S}, it therefore suffices to prove that $\dim S\le1$.

Let $\vec a\in S\setminus\{0\}$ and set $\xi=T(\vec a)$, which is nonzero by injectivity. By \eqref{eq:pointwise-Lstar} and $\xi\in L^2(\Sig)$,
\begin{equation}\label{eq:L2-normal}
\int_\Sig\bigl(|\vec a|^2-\langle\vec a,N\rangle^2\bigr)\dd\mu=\int_\Sig|\xi|^2\dd\mu<+\infty.
\end{equation}
By \S\ref{subsec:conformal}, finite Morse index gives finite total curvature, so along each end $\Ed$, the unit normal converges to a constant unit vector $n_\Ed$ (see \cite{Schoen}). Were $n_\Ed\ne\pm\vec a/|\vec a|$ for some end, then
\[
|\vec a|^2-\langle\vec a,N\rangle^2\ \longrightarrow\ |\vec a|^2-\langle\vec a,n_\Ed\rangle^2>0
\]
along that end, so the integrand of \eqref{eq:L2-normal} would be bounded below by a positive constant on a neighborhood of $\Ed$; as every end of a complete minimal surface has infinite area, this contradicts \eqref{eq:L2-normal}. Hence $n_\Ed=\pm\vec a/|\vec a|$ for every end.

Now let $\vec a,\vec a'\in S\setminus\{0\}$. Since $\Sig$ is noncompact, it has at least one end $\Ed$ and, by the previous paragraph, $n_\Ed=\pm\vec a/|\vec a|=\pm\vec a'/|\vec a'|$, which forces $\vec a'$ to be parallel to $\vec a$. Thus all nonzero vectors of $S$ are parallel, so $\dim S\le1$, and by \eqref{eq:image-S} and the injectivity of $T$,
\[
d:=\dim_\R\bigl(\Lst(\Sig)\cap\Lsq(\Sig)\bigr)=\dim T(S)=\dim S\le1.\qedhere
\]
\end{proof}

\begin{remark}\label{rem:d-vanishes}
The proof shows more: if $d=1$, with $S=\R\mskip1mu\vec a$, then every end of $\Sig$ has limiting unit normal $\pm\vec a/|\vec{a}|$. Contrapositively, $d=0$ as soon as the ends do not all share a common limiting normal, and in particular when some end is catenoidal; these are the situations listed in Theorem~\ref{thm:weak}. We do not know whether $d=1$ actually occurs; the statement of Theorem~\ref{thm:weak} is arranged so as not to depend on this. The totally geodesic case, where $T$ fails to be injective, is treated separately in \S\ref{subsec:proof-weak}.
\end{remark}

\section{Proof of the index estimates}\label{sec:proof}

We can now prove Theorems~\ref{thm:main} and~\ref{thm:weak}. We first record the dimension of $\Lsq(\Sig)$, which is the case $\rho\equiv1$ of Theorem~\ref{thm:dim-formula}.

\begin{corollary}\label{cor:dim-unweighted-fbms}
Let $\Sig$ be as in Section~\ref{sec:fbms}. Then
\begin{equation}\label{eq:dim-unweighted-fbms}
\dim_\R\Lsq(\Sig)=2g+k-1.
\end{equation}
\end{corollary}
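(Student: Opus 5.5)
The plan is to apply Theorem~\ref{thm:dim-formula} with the trivial weight $\rho\equiv1$. First we check that this weight is admissible in the sense of Definition~\ref{def:admissible}.
\begin{enumerate}[label=\rm(\alph*)]
\item The constant $1$ is bounded and bounded away from $0$.
\item Near each puncture it is comparable to the radial profile $\varrho_j\equiv1$.
\item By Remark~\ref{rem:examples}(1), the integral $\int_0^{R_1}r^{1-2m}\dd r$ converges only for $m=0$, so $N_j=0<\infty$ for every $j$.
\end{enumerate}
With all $N_j=0$ we have $\varepsilon=0$, and Theorem~\ref{thm:dim-formula} gives $\dim_\R L^2_1\HH^1_T(\Sig)=2g+k-1$.

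The remaining point is to identify $L^2_1\HH^1_T(\Sig)$, defined via the abstract Riemann surface $\Sig=\Sgb\setminus\{p_1,\dots,p_r\}$, with the space $\Lsq(\Sig)$ of Section~\ref{sec:prelim}, defined via the induced metric of the immersion. By \eqref{eq:model}, the Hong--Saturnino compactification (Theorem~\ref{thm:HS-structure}) realizes $\Sig$ as such a punctured surface, and the induced metric is conformal to any smooth metric $h$ on $\Sgb$ compatible with the conformal structure. By Lemma~\ref{lem:conf-inv}, harmonicity, tangentiality along $\dSig$, and the unweighted energy $\int_\Sig|\omega|^2\dd\mu$ (the case $\rho\equiv1$ of item (iii)) are all unchanged under this conformal change. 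Finally, the dictionary of \S\ref{subsec:fields} ($\xi\leftrightarrow\xi^\flat$, $|\xi|=|\omega|$, $\langle\xi,\nu\rangle=\iota_\nu\omega$) identifies harmonic tangent fields with tangential harmonic one-forms. Together these give $\Lsq(\Sig)=L^2_1\HH^1_T(\Sig)$, which proves \eqref{eq:dim-unweighted-fbms}.

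As an alternative route, one can bypass the general theorem. Lemma~\ref{lem:laurent} with all $N_j=0$ shows that every $L^2$ form extends smoothly across the punctures, and Lemma~\ref{lem:kernel-P} then identifies the space with $\HH^1_T(\Sgb)$, whose dimension is $2g+k-1$ by \eqref{eq:dim-unweighted}. I expect no real obstacle. The only subtlety is the conformal-invariance bookkeeping: the induced metric degenerates like $|z|^{-2(d_j+1)}|\dd z|^2$ near the punctures, and one must note that Lemma~\ref{lem:conf-inv}(iii) holds for arbitrary smooth conformal factors on the open surface $\Sig$, so this degeneration does not matter.
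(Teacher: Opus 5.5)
Your proposal is correct and follows the paper's proof: you check that $\rho\equiv1$ is admissible with $N_j=0$ via Remark~\ref{rem:examples}(1), conclude $\varepsilon=0$, and identify $\Lsq(\Sig)$ with the $\rho\equiv1$ weighted space through Lemma~\ref{lem:conf-inv}(iii). Your alternative route through Lemmas~\ref{lem:laurent} and~\ref{lem:kernel-P} is just the proof of Theorem~\ref{thm:dim-formula} specialized to the case where all $N_j$ vanish.
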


\begin{proof}
The constant weight $\rho\equiv1$ is admissible, with radial profiles $\varrho_j\equiv1$ and local orders $N_j=0$ by Remark~\ref{rem:examples}(1). By Lemma~\ref{lem:conf-inv}(iii), the condition $\int_\Sig|\omega|^2\dd\mu<\infty$ is the same whether computed in the induced metric of $\Sig$ or in any conformally related metric, so $\Lsq(\Sig)$ is the space $\Lrho(\Sig)$ for this weight. Since all $N_j$ vanish, we have $\varepsilon=0$, and Theorem~\ref{thm:dim-formula} gives \eqref{eq:dim-unweighted-fbms}.
\end{proof}

\subsection{Proof of Theorem~\ref{thm:main}}\label{subsec:proof-main}

Let $\Sig$ be as in the statement, let $n:=\Ind(\Sig)<\infty$ and set
\[
V:=\Lstar(\Sig), \qquad \dim_\R V=2g+k+2\sum_{j=1}^r(d_j+1)-2\ \ge\ 3
\]
by Corollary~\ref{cor:fbms}. We use the logarithmic cut-off $\lcut_R$ of \S\ref{subsec:cutoff}. Suppose, for contradiction, that
\begin{equation}\label{eq:contradiction-hyp}
\dim_\R V>3n.
\end{equation}

Let $\phi_1,\dots,\phi_n$ and $W=\spann\{\phi_1,\dots,\phi_n\}$ be as in Theorem~\ref{thm:HS}; if $n=0$ then $W=\{0\}$ and $W^\perp=L^2(\Sig)$. Applying Corollary~\ref{cor:strict-positivity}, we may fix $R$ so large that
\begin{equation}\label{eq:main-negativity}
\cQ(\lcut_R\xi,\lcut_R\xi)<0\ \text{ for every }\ \xi\in V\setminus\{0\}.
\end{equation}

Consider the linear map
\[
\Phi_R\colon V\to\R^{3n},\qquad\Phi_R(\xi)=\left(\int_\Sig\lcut_R\mskip1mu u_i\mskip1mu\phi_j\dd\mu\right)_{1\le i\le3,\, 1\le j\le n},
\]
where $u_i=\langle\xi,E_i\rangle$. It is well defined: $\lcut_R u_i$ is smooth with compact support in $\Sig\cap\overline{B}_{R^2}$, on which $\xi$ is smooth and bounded, its poles lying at ambient infinity, away from $\supp\lcut_R$.

By \eqref{eq:contradiction-hyp} and the rank--nullity theorem, there exists $\xi\in\ker\Phi_R\setminus\{0\}$. For such a~$\xi$, we have $\lcut_R u_i\in C_c^\infty(\Sig)\cap W^\perp$ for $i=1,2,3$, so Theorem~\ref{thm:HS} gives 
\[
\cQ(\lcut_R u_i,\lcut_R u_i)\,\ge\,0\ \text{ for each }\ i,
\]
and summing over $i$,
\[
\cQ(\lcut_R\xi,\lcut_R\xi)\ =\ \sum_{i=1}^3\cQ(\lcut_R u_i,\lcut_R u_i)\ \ge\ 0,
\]
which contradicts \eqref{eq:main-negativity}. Therefore \eqref{eq:contradiction-hyp} fails, that is, $\dim_\R V\le3n$, which is \eqref{eq:main}. \qed

\begin{corollary}\label{cor:no-stable}
Under the hypotheses of Theorem~\ref{thm:main}, one has $\Ind(\Sig)\ge1$. In particular, there is no stable complete noncompact free boundary minimal surface 
with compact boundary in a domain $\Om$ whose boundary satisfies $H_{\dOm}\ge0$ along $\dSig$ and $H_{\dOm}>0$ somewhere on $\dSig$.
\end{corollary}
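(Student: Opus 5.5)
This is a direct consequence of Theorem~\ref{thm:main} combined with Corollary~\ref{cor:fbms}. I do not plan to rerun the index argument; I will only check that the right-hand side of \eqref{eq:main} is strictly positive and then use that the index is an integer.

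\textbf{Step 1 (positivity of the bound).} By Theorem~\ref{thm:main},
\[
3\,\Ind(\Sig)\ \ge\ \dim_\R\Lstar(\Sig)=2g+k+2\sum_{j=1}^r(d_j+1)-2 .
\]
Since $\Sig$ is noncompact, it has at least one end, so $r\ge1$. Every end has multiplicity $d_j\ge1$. Together with $k\ge1$ and $g\ge0$, this gives a right-hand side of at least $0+1+2\cdot 2-2=3$. This is precisely the last assertion of Corollary~\ref{cor:fbms}.

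\textbf{Step 2 (integrality).} Step~1 gives $3\,\Ind(\Sig)\ge3$, hence $\Ind(\Sig)\ge1$. If one prefers to avoid dividing, the same conclusion follows from integrality: if $\Ind(\Sig)=0$, the proof of Theorem~\ref{thm:main} with $n=0$ and $W=\{0\}$ would require $\dim V\le 0$. That contradicts $\dim V\ge3$.

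\textbf{Step 3 (the stability statement).} A stable surface is by definition one with $\Ind(\Sig)=0$. The surfaces in question satisfy every hypothesis of Theorem~\ref{thm:main}, namely finite index, compact boundary, and the stated sign condition on $H_{\dOm}$. So Step~2 rules out stability.

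There is no real obstacle here. The only point to watch is the finite-index hypothesis needed to invoke Theorem~\ref{thm:main}, and stable surfaces have index $0<\infty$, so it holds automatically.
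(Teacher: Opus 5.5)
Your proposal is correct and matches the paper's proof: Corollary~\ref{cor:fbms} bounds the right-hand side of \eqref{eq:main} below by $3/3=1$, and stability means $\Ind(\Sig)=0$, which is finite, so Theorem~\ref{thm:main} applies. Your Step~2 is a harmless restatement of the same inequality.
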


\begin{proof}
By Corollary~\ref{cor:fbms}, the right-hand side of \eqref{eq:main} is at least $3/3=1$.
\end{proof}

\subsection{Proof of Theorem~\ref{thm:weak}}\label{subsec:proof-weak}

Now $H_{\dOm}\ge0$ along $\dSig$, with no further assumption. We use the linear cut-off $\varphi_R$ of \S\ref{subsec:cutoff} and the unweighted space
\[
V:=\Lsq(\Sig),\ \text{ with }\ \dim_\R V=2g+k-1,
\]
by Corollary~\ref{cor:dim-unweighted-fbms}. Set $n:=\Ind(\Sig)$ and let $\phi_1,\dots,\phi_n$ and $W$ be as in Theorem~\ref{thm:HS}. We distinguish two
cases.

\subsubsection*{Case A: $\Sig$ is not totally geodesic}

Define the linear map
\[
\Phi\colon V\to\R^{3n},\qquad\Phi(\xi)=\left(\int_\Sig\phi_i\mskip1mu u_j\dd\mu\right)_{1\le i\le n,\, 1\le j\le3},\qquad u_j=\langle\xi,E_j\rangle,
\]
which is well defined because $\phi_i$ and $u_j$ are square integrable. By rank--nullity,
\begin{equation}\label{eq:rank-nullity-weak}
\dim_\R\ker\Phi\ \ge\ (2g+k-1)-3n.
\end{equation}
We claim that
\begin{equation}\label{eq:ker-in-Lstar}
\ker\Phi\ \subseteq\ \Lst(\Sig)\cap\Lsq(\Sig).
\end{equation}
Granting the claim, Proposition~\ref{prop:dim-Lstar} gives $\dim_\R\ker\Phi\le d\le1$, so \eqref{eq:rank-nullity-weak} yields $(2g+k-1)-3n\le d$, that is, $\Ind(\Sig)=n\ge(2g+k-1-d)/3$, which is \eqref{eq:weak}.

It remains to prove \eqref{eq:ker-in-Lstar}. Fix $\xi\in\ker\Phi$, so that each coordinate $u_j$ lies in $W^\perp$, and introduce the linear functional
\begin{equation}\label{eq:F-functional}
\mathcal F(Y):=2\int_\Sig\langle\nabla\xi,A\rangle\langle N,Y\rangle\dd\mu+\int_{\dSig}\langle Y,\cB\mskip1mu\xi\rangle\dd\sigma,
\end{equation}
defined for every smooth $Y\colon\Sig\to\R^3$ with $|Y|\in L^2(\Sig)$. The interior integral converges for such $Y$: indeed $|A|$ is bounded and $|\nabla\xi|\in L^2(\Sig)$ by Lemma~\ref{lem:aux}(a), so $\langle\nabla\xi,A\rangle\in L^2(\Sig)$, and the Cauchy--Schwarz inequality applies. The boundary integral converges because $\dSig$ is compact. Integrating \eqref{eq:Q} by parts and using $\cJ u_i=-2g_i\langle\nabla\xi,A\rangle$ from \eqref{eq:Jui}, one finds
\begin{equation}\label{eq:Q-equals-F}
\cQ(\xi,Y)=\mathcal F(Y)\ \text{ for every }\ Y\in C_c^\infty(\Sig;\R^3).
\end{equation}
By Theorem~\ref{thm:Ros}, proving \eqref{eq:ker-in-Lstar} amounts to proving $\langle\nabla\xi,A\rangle\equiv0$, which we do in three steps.

\medskip
\noindent\textbf{Step 1: $\mathcal F(Y)=0$ for every $Y\in C_c^\infty(\Sig;\R^3)$ whose coordinates lie in $W^\perp$.} Fix such a $Y$, choose $R$ with $\supp Y\subset B_R$, and let $\vec v_1,\dots,\vec v_n\in\R^3$ be chosen so that every coordinate of
\[
\zeta_R:=\varphi_R\Bigl(\xi+\sum_{k=1}^n\phi_k\vec v_k\Bigr)
\]
lies in $W^\perp$. Their existence, and the fact that $\vec v_k\to\vec0$ as $R\to+\infty$, follow from the solution to the linear system $b_{ij}(R)+\sum_ka_{ik}(R)v_{kj}=0$, where $b_{ij}(R):=\int_\Sig\varphi_R\phi_iu_j\dd\mu$ and $a_{ik}(R):=\int_\Sig\varphi_R\phi_i\phi_k\dd\mu$: by dominated convergence theorem, $a_{ik}(R)\to\delta_{ik}$ and $b_{ij}(R)\to\int_\Sig\phi_iu_j\dd\mu=0$, so the matrix $(a_{ik}(R))$ is invertible for $R$ large and the solution $\vec v_k=\vec v_k(R)$ tends to $\vec 0$.

Since $\varphi_R\equiv1$ on $\supp Y$, for every $t\in\R$, the field $\zeta_R+tY$ has compact support and all its coordinates lie in $W^\perp$, so Theorem~\ref{thm:HS} gives
\[
0\ \le\ \cQ(\zeta_R+tY,\zeta_R+tY)=\cQ(Y,Y)\mskip1mu t^2+2\mskip1mu\cQ(\zeta_R,Y)\mskip1mu t+\cQ(\zeta_R,\zeta_R)
\]
for every $t\in\R$. A quadratic polynomial in $t$ is nonnegative on $\R$ if and only if its leading coefficient is nonnegative and its discriminant is nonpositive, so
\begin{equation}\label{eq:CS}
\cQ(Y,Y)\ \ge\ 0\quad \text{ and }\quad \cQ(\zeta_R,Y)^2\ \le\ \cQ(Y,Y)\mskip1mu\cQ(\zeta_R,\zeta_R).
\end{equation}

We compute the two sides of the second inequality in \eqref{eq:CS}. For the left side, all the integrands defining $\cQ(\cdot,Y)$ are supported in $\supp Y$, where $\varphi_R\equiv1$, so
\[
\cQ(\zeta_R,Y)=\cQ(\xi,Y)+\sum_{k=1}^n\cQ(\phi_k\vec v_k,Y).
\]
Moreover, integrating by parts and using $\cJ\phi_k+\lambda_k\phi_k=0$ and $\cB\mskip1mu\phi_k=0$,
\[
\begin{aligned}
\cQ(\phi_k\vec v_k,Y)&=-\int_\Sig\langle Y,\vec v_k\rangle\cJ\phi_k\dd\mu+\int_{\dSig}\langle Y,\vec v_k\rangle\cB\mskip1mu\phi_k\dd\sigma\\[4pt]
&=\lambda_k\int_\Sig\langle Y,\vec v_k\rangle\phi_k\dd\mu=0,
\end{aligned}
\]
the last integral vanishing because the coordinates of $Y$ are $L^2$-orthogonal to $\phi_k$. Hence $\cQ(\zeta_R,Y)=\cQ(\xi,Y)=\mathcal F(Y)$ by \eqref{eq:Q-equals-F}, a quantity independent of $R$.

For the right side, we expand
\[
\cQ(\zeta_R,\zeta_R)=\mathrm{(I)}+2\mskip1mu\mathrm{(II)}+\mathrm{(III)},
\]
\[
\mathrm{(I)}=\cQ(\varphi_R\xi,\varphi_R\xi),\qquad\mathrm{(II)}=\sum_{k=1}^n\cQ(\varphi_R\xi,\varphi_R\phi_k\vec v_k),
\]
\[
\mathrm{(III)}=\sum_{k,\ell=1}^n\cQ(\varphi_R\phi_k\vec v_k,\varphi_R\phi_\ell\vec v_\ell),
\]
and estimate the three terms as $R\to+\infty$.

\emph{Term} (I). By Lemma~\ref{lem:key-identity} applied to $\xi$ and to the linear cut-off $\varphi_R$, together with \eqref{eq:linear-gradient},
\[
\mathrm{(I)}=\int_\Sig|\nabla\varphi_R|^2|\xi|^2\dd\mu-\int_{\dSig}H_{\dOm}|\xi|^2\dd\sigma\ \le\ \frac{C^2}{R^2}\|\xi\|_{L^2}^2-\int_{\dSig}H_{\dOm}|\xi|^2\dd\sigma,
\]
so that
\[
\limsup_{R\to+\infty}\mathrm{(I)}\ \le\ -\int_{\dSig}H_{\dOm}|\xi|^2\dd\sigma\ \le\ 0,
\]
where we used that $H_{\dOm}\ge0$ along $\dSig$.

\emph{Terms} (II) \emph{and} (III). Integrating by parts and using $\cJ\phi_k+\lambda_k\phi_k=0$ and $\cB\mskip1mu\phi_k=0$,
\[
\begin{aligned}
\cQ(\varphi_R\xi,\varphi_R\phi_k\vec v_k)\,={}&\lambda_k\int_\Sig\varphi_R^2\langle\xi,\vec v_k\rangle\phi_k\dd\mu-\int_\Sig(\varphi_R\Delta\varphi_R)\langle\xi,\vec v_k\rangle\phi_k\dd\mu\\
&-2\int_\Sig\varphi_R\langle\xi,\vec v_k\rangle\langle\nabla\varphi_R,\nabla\phi_k\rangle\dd\mu,
\end{aligned}
\]
and similarly for the terms of (III). Now $\varphi_R$, $\varphi_R\Delta\varphi_R$ and $|\nabla\varphi_R|$ are bounded uniformly in $R$, while $|\xi|$, $\phi_k$ and $|\nabla\phi_k|$ are square integrable by Lemma~\ref{lem:aux}; hence each of the three integrals above is bounded uniformly in $R$ by the Cauchy--Schwarz inequality. Since $\vec v_k\to\vec 0$, both (II) and (III) tend to $0$.

Combining $\limsup_{R\to+\infty}\cQ(\zeta_R,\zeta_R)\le0$ and \eqref{eq:CS} gives
\[
\mathcal F(Y)^2=\cQ(\zeta_R,Y)^2\le\cQ(Y,Y)\mskip1mu\cQ(\zeta_R,\zeta_R)\ \text{ for every large }\ R,
\]
with $\cQ(Y,Y)\ge0$ fixed. Letting $R\to+\infty$ yields $\mathcal F(Y)^2\le0$, that is, $\mathcal F(Y)=0$.

\medskip
\noindent\textbf{Step 2: $\mathcal F(Z)=0$ for every smooth $Z$ with $|Z|\in L^2(\Sig)$ whose coordinates lie in $W^\perp$.} Given such a $Z$, choose $\vec w_1,\dots,\vec w_n\in\R^3$, depending on $R$ and
tending to $\vec 0$, so that the compactly supported field
\[
Y_R:=\varphi_R\Bigl(Z+\sum_{k=1}^n\phi_k\vec w_k\Bigr)
\]
has all its coordinates in $W^\perp$; the argument is the one used for $\zeta_R$ in Step~1. By Step~1, $\mathcal F(Y_R)=0$, that is, using $\varphi_R\equiv1$ near $\dSig$,
\[
\begin{aligned}
&2\int_\Sig\varphi_R\langle\nabla\xi,A\rangle\langle N,Z\rangle\dd\mu+\int_{\dSig}\langle Z,\cB\mskip1mu\xi\rangle\dd\sigma\\[4pt]
&\qquad=-2\sum_{k=1}^n\int_\Sig\varphi_R\phi_k\langle\nabla\xi,A\rangle\langle N,\vec w_k\rangle\dd\mu-\sum_{k=1}^n\int_{\dSig}\phi_k\langle\vec w_k,\cB\mskip1mu\xi\rangle\dd\sigma.
\end{aligned}
\]
On the right-hand side, each interior integral is bounded uniformly in $R$, by the Cauchy--Schwarz inequality, and each boundary integral is bounded because $\dSig$ is compact; since $\vec w_k\to\vec 0$, the whole right-hand side tends to $0$. On the left-hand side, the integrand of the interior integral is dominated by $2|\langle\nabla\xi,A\rangle||Z|\in L^1(\Sig)$, so dominated convergence applies as $\varphi_R\to1$. Passing to the limit gives $\mathcal F(Z)=0$.

\medskip
\noindent\textbf{Step 3: $\mathcal F(\phi_\ell\vec a)=0$ for every $\ell\in\{1,\dots,n\}$ and every $\vec a\in\R^3$.} Write $v:=\langle\xi,\vec a\rangle$, so that $v\in W^\perp\subset L^2(\Sig)$, since the coordinates of $\xi$ lie in $W^\perp$. By Theorem~\ref{thm:Ros},
\begin{equation}\label{eq:Ros-v}
2\langle\nabla\xi,A\rangle\langle N,\vec a\rangle=-\cJ v=-\Delta v-|A|^2v.
\end{equation}
Consider the truncated expression
\[
\mathcal F_R:=2\int_\Sig\varphi_R\phi_\ell\langle\nabla\xi,A\rangle\langle N,\vec a\rangle\dd\mu+\int_{\dSig}\phi_\ell\langle\vec a,\cB\mskip1mu\xi\rangle\dd\sigma,
\]
which tends to $\mathcal F(\phi_\ell\vec a)$ as $R\to+\infty$, by dominated convergence in the interior integral, the integrand being dominated by $|\langle\nabla\xi,A\rangle|\mskip1mu|\phi_\ell|\in L^1(\Sig)$. We claim that $\mathcal F_R\to0$.

By \eqref{eq:Ros-v}, the first term of $\mathcal F_R$ equals $-\int_\Sig\varphi_R\phi_\ell(\Delta v+|A|^2v)$. For the second term, Green's identity applied to the pair $\varphi_R\phi_\ell$ and $v$, together with $\varphi_R\equiv1$ near $\dSig$ and with $\p_\nu\phi_\ell=\II_{\dOm}(N,N)\phi_\ell$, which is $\cB\mskip1mu\phi_\ell=0$, gives
\[
\begin{aligned}
\int_\Sig\bigl(\varphi_R\phi_\ell\Delta v-v\mskip1mu\Delta(\varphi_R\phi_\ell)\bigr)\dd\mu&=\int_{\dSig}\bigl(\phi_\ell\p_\nu v-v\mskip1mu\II_{\dOm}(N,N)\phi_\ell\bigr)\dd\sigma\\[4pt]
&=\int_{\dSig}\phi_\ell\langle\vec a,\cB\mskip1mu\xi\rangle\dd\sigma,
\end{aligned}
\]
where we used $\p_\nu v=\langle\vec a,D_\nu\xi\rangle$ and the definition \eqref{eq:B-vector} of $\cB\mskip1mu\xi$. Adding the two contributions, the terms containing $\Delta v$ cancel and
\[
\mathcal F_R=-\int_\Sig\varphi_R\phi_\ell|A|^2v\dd\mu-\int_\Sig v\mskip1mu\Delta(\varphi_R\phi_\ell)\dd\mu.
\]
Expanding $\Delta(\varphi_R\phi_\ell)=\phi_\ell\Delta\varphi_R+2\langle\nabla\varphi_R,\nabla\phi_\ell\rangle+\varphi_R\Delta\phi_\ell$ and using
\[
\Delta\phi_\ell=-(|A|^2+\lambda_\ell)\phi_\ell,
\]
the terms containing $|A|^2$ cancel as well, and we are left with
\begin{equation}\label{eq:FR-three-terms}
\mathcal F_R=\lambda_\ell\int_\Sig\varphi_R\mskip1mu v\mskip1mu\phi_\ell\dd\mu-\int_\Sig v\mskip1mu\phi_\ell\mskip1mu\Delta\varphi_R\dd\mu-2\int_\Sig v\mskip1mu\langle\nabla\varphi_R,\nabla\phi_\ell\rangle\dd\mu.
\end{equation}
We treat the three terms of \eqref{eq:FR-three-terms} in turn. The first tends to $\lambda_\ell\int_\Sig v\phi_\ell\dd\mu=0$, by dominated convergence and the orthogonality $v\in W^\perp$. For the second, integrating by parts and using that $\nabla\varphi_R$ vanishes near $\dSig$,
\[
\left|\int_\Sig v\mskip1mu\phi_\ell\mskip1mu\Delta\varphi_R\dd\mu\right|=\left|\int_\Sig\bigl\langle\nabla(v\phi_\ell),\nabla\varphi_R\bigr\rangle\dd\mu\right|\le\frac{C}{R}\int_\Sig|\nabla(v\phi_\ell)|\dd\mu\ \longrightarrow\ 0,
\]
because $|\nabla(v\phi_\ell)|\in L^1(\Sig)$ by Lemma~\ref{lem:aux}(c). For the third, by the Cauchy--Schwarz inequality and Lemma~\ref{lem:aux}(b), 
\[
\left|2\int_\Sig v\langle\nabla\varphi_R,\nabla\phi_\ell\rangle\dd\mu\right|\le\frac{2C}{R}\mskip1mu\|v\|_{L^2}\mskip1mu\|\nabla\phi_\ell\|_{L^2}\ \longrightarrow\ 0.
\]
Hence $\mathcal F_R\to0$ and $\mathcal F(\phi_\ell\vec a)=0$.

\medskip
\noindent\textbf{Conclusion of Case A.}
Let $Z\in C_c^\infty(\Sig;\R^3)$ be arbitrary and decompose each coordinate as $z_i=z_i^\perp+\sum_{\ell=1}^n c_{i\ell}\phi_\ell$ with $c_{i\ell}=\int_\Sig z_i\phi_\ell\dd\mu$ and $z_i^\perp\in W^\perp$; setting
$\vec c_\ell=(c_{1\ell},c_{2\ell},c_{3\ell})$, we get
\[
Z=Z^\perp+\sum_{\ell=1}^n\phi_\ell\vec c_\ell,
\]
where $Z^\perp$ is smooth, square integrable and has coordinates in $W^\perp$. By Steps~2 and~3 and the linearity of $\mathcal F$,
\[
\mathcal F(Z)=\mathcal F(Z^\perp)+\sum_{\ell=1}^n\mathcal F(\phi_\ell\vec c_\ell)=0.
\]
Choosing $Z=\psi N$ with $\psi\in C_c^\infty(\Sig)$ supported in the interior of $\Sig$, the boundary term of \eqref{eq:F-functional} drops and $\langle N,Z\rangle=\psi$, so
\[
\int_\Sig\langle\nabla\xi,A\rangle\mskip1mu\psi\dd\mu=0\ \text{ for every such }\ \psi,
\]
whence $\langle\nabla\xi,A\rangle\equiv0$ on $\Sig$. Since $\Sig$ is not totally geodesic, Theorem~\ref{thm:Ros} gives $\xi\in\Lst(\Sig)$, and since $\xi\in\Lsq(\Sig)$ by hypothesis, this proves \eqref{eq:ker-in-Lstar} and completes Case~A.

\subsubsection*{Case B: $\Sig$ is totally geodesic}

Then $A\equiv0$, the genus is $g=0$, and $\Sig$ is an immersed planar domain $\Om'\subset\R^2$ with $k$ smooth compact boundary components. The second variation \eqref{eq:Q} reduces to
\[
\cQ(u,u)=\int_{\Om'}|\nabla u|^2\dd\mu-\int_{\p\Om'}\II_{\dOm}(N,N)u^2\dd\sigma.
\]
If $H_{\dOm}>0$ at some point of $\dSig$, Theorem~\ref{thm:main} applies so that \eqref{eq:main} implies \eqref{eq:weak}. Otherwise, if $H_{\dOm}\equiv 0$ along $\dSig$, the free boundary condition gives 
\[
\II_{\dOm}(N,N)=-\II_{\dOm}(\tau,\tau)=-k_g,
\]
where $k_g$ is the geodesic curvature of $\dSig$ in $\Sig$. Hence $\Ind(\Sig)$ equals the index of the intrinsic Robin problem
\[
\Delta u+\lambda u=0\ \text{ in }\ \Om',\qquad\p_\nu u+k_gu=0\ \text{ on }\ \p\Om'.
\]
By Theorem~1 of the second-named author's work \cite{Mendes}, either $\Sig$ is stable and is homeomorphic to a compact disk, which in our case cannot happen because we are assuming that $\Sig$ is noncompact, or $\Sig$ is unstable and 
\[
\Ind(\Sig)\ \ge\ \max\left\{1,\left\lceil\frac{k-2}{2}\right\rceil\right\}.
\]
In the latter case, we compare with $(2g+k-1)/3=(k-1)/3$, recalling $g=0$: for $k\le4$, one has $(k-1)/3\le1$, so the bound $\Ind(\Sig)\ge1$ suffices; while for $k>4$, one has $\lceil(k-2)/2\rceil\ge(k-2)/2>(k-1)/3$. In either case,
\[
\Ind(\Sig)\ge\frac{2g+k-1}{3}\ge\frac{2g+k-1-d}{3},
\]
which is \eqref{eq:weak}. \qed

\section*{Acknowledgments}
The $L^2$ part of this work
is contained in the Ph.D. thesis of I.\,R.\,S.\ \cite{Santos-thesis}, written under the supervision of M.\,C.\ and defended at the Federal University of Alagoas on May 25, 2026. 
The third author thanks the Graduate Program in Mathematics of UFAL for its support and hospitality during their Ph.D. studies.
This work was carried out within the CAPES--Math AmSud project \emph{New Trends in Geometric Analysis}
(CAPES, Grant 88887.985521/ 2024-00).
M.\,C.\ and A.\,M.\ were partially supported by CNPq (Grants: 311136/2023-0 to M.\,C.; 309867/2023-1 and 445723/2025-4 to A.\,M.); I.\,R.\,S.\  was financed in part by the Coordenação de Aperfeiçoamento de Pessoal de Nível Superior - Brasil (CAPES) - Finance Code 001

\section*{Statement on the use of generative AI}
During the preparation of this work the authors used the large language
model Claude (Anthropic) as an assistant, in the following ways. First, the
model was used to read successive drafts and to point out possible gaps in
the arguments; all corrections and alternative arguments suggested by the
model were checked in full by the authors before being adopted. Second, the
model was used to draft or rewrite expository passages, in particular in the
introduction and in several remarks, starting from the authors' notes; these
passages were then revised by the authors. Third, the model assisted with the
\LaTeX{} source and with the consistency of notation and terminology, and it
was used to locate related results in the literature; every reference cited
was consulted directly by the authors. The research questions, the main
results and the strategy of the proofs are due to the authors, who take full
responsibility for the content of this paper, including its correctness.

\bibliographystyle{amsplain}
\bibliography{bibliography-v2}

\end{document}